\tikzset{caption/.style={execute at end picture={\path
let \p1=($(current bounding box.east)-(current bounding box.west)$) in
(current bounding box.south) node[below,text width=\x1-4pt,align=center] 
{#1};}}}
\def\makeautorefname#1#2{\expandafter\def\csname#1autorefname\endcsname{#2}}
\def\equationautorefname~#1\null{(#1)\null}
\newtheorem{thm}{Theorem}[section]
\newtheorem{cor}{Corollary}[section]
\newtheorem{prop}{Proposition}[section]
\newtheorem{lem}{Lemma}[section]
\theoremstyle{definition}
\newtheorem{defn}{Definition}[section]
\newtheorem{con}{Construction}[section]
\newtheorem{notn}{Notation}[section]
\newtheorem{rem}{Remark}[section]
\newcounter{assn}[section]
\renewcommand{\theassn}{\Alph{assn}}
\let\c@cont=\c@thm
\let\c@conv=\c@thm
\let\c@obs=\c@thm
\let\c@sta=\c@thm
\let\c@cor=\c@thm
\let\c@prop=\c@thm
\let\c@lem=\c@thm
\let\c@prob=\c@thm
\let\c@con=\c@thm
\let\c@conj=\c@thm
\let\c@defn=\c@thm
\let\c@notn=\c@thm
\let\c@notns=\c@thm
\let\c@exmp=\c@thm
\let\c@ax=\c@thm
\let\c@pro=\c@thm
\let\c@ass=\c@thm
\let\c@warn=\c@thm
\let\c@rem=\c@thm
\let\c@sch=\c@thm
\let\c@equation\c@thm
\numberwithin{equation}{section}
\definecolor{orange}{rgb}{1,0.5,0}
\newcommand{\Xk}{X^{\otimes k}}
\newcommand{\Ek}{\sE^{\boxtimes k}}
\newcommand{\comm}{\ensuremath{\sN}}
\newcommand{\fr}{\ensuremath{\mathrm{fr}}}
\newcommand{\Mfr}{\sD_M^{\mathrm{fr}}}
\newcommand{\Vfr}{\sD_V^{\mathrm{fr}}}
\newcommand{\FM}{\underline{G\sT}(M^+, {^*}\!S^V)}
\newcommand{\Mbfr}{\bD_M^{\mathrm{fr}}}
\newcommand{\Vbfr}{\bD_V^{\mathrm{fr}}}
\newcommand{\bi}{\mathbf{i}}
\newcommand{\bj}{\mathbf{j}}
\newcommand{\bk}{\mathbf{k}}
\newcommand{\bm}{\mathbf{m}}
\newcommand{\bn}{\mathbf{n}}
\newcommand{\bp}{\mathbf{p}}
\newcommand{\bq}{\mathbf{q}}
\newcommand{\mI}{\mathcal{I}}
\title{Unital operads, monoids, monads, and bar constructions }
\author{J. P. May, Ruoqi Zhang, and  Foling Zou}
\begin{document}

\begin{abstract}
We give a description of unital operads in a symmetric monoidal category as monoids in a monoidal category of unital $\LA$-sequences.  This is a new variant of Kelly's old description of operads as monoids in the monoidal category of symmetric sequences.  The monads associated to unital operads are the ones of interest in iterated loop space theory and factorization homology, among many other applications.  Our new description of unital operads allows an illuminating comparison between the two-sided monadic bar constructions used in such applications and ``classical'' monoidal two-sided bar constructions.  It also allows a more conceptual understanding of the scanning map central to non-abelian Poincar\'e duality in factorization homology. 
\end{abstract}

\maketitle

The authors dedicate this paper to Max Kelly (1930--2007).  His insights from the very beginnings of operad theory were the inspiration for this paper.

\tableofcontents

\section*{Introduction}

The senior author defined operads in 1971 \cite{MayGeo}, focusing on operads of spaces.  However, Max Kelly was visiting Chicago at the time, and in a widely but spottily distributed 1972 preprint, only published in 2005 \cite{Kelly0}, he first wrote down the immediate observation that operads can be defined in any symmetric monoidal category $\sV$.  He also gave a conceptual description of operads as monoids in the monoidal category of symmetric sequences in $\sV$.  That description was subsequently rediscovered by several others.  It is especially convenient when studying model structures on categories of operads or of algebras over operads \cite{BMAx, BergMoerOp2, Kro, PS}.  It has been used to good effect in a number of other works, especially in the use of operads in algebra \cite{Fresse0, LV}.

Work by the third author on factorization homology led us to a rethinking of
this material.   The theoretical focus in the cited papers is on the category
$\SI$ with objects $n\geq 0$ (or just $n\geq 1$) and morphisms the symmetric
groups. A functor $\SI^{op}\rtarr \sV$ is a symmetric sequence\footnote{Some
  authors use the term collection for symmetric sequences. \\ \indent In this
  paper,  it is  important to distinguish covariant and contravariant functors
  on $\SI$, which are given respectively by left actions and right actions by
  symmetric groups.}  in $\sV$, and operads are symmetric sequences with additional structure.   However, the operads of most interest to us have a more natural underlying structure.  At the risk of belaboring the obvious, we shall give a detailed exposition of some of the basic foundational features of operad theory.

Let $\mathbf{n} =\{0,1, \cdots, n\}$, with basepoint $0$.  Call the category of such based finite sets and based injections $\LA$.  Note that the category $\SI$ with the same objects and their (based) permutations is the subcategory of isomorphisms of $\LA$, and $\LA$ is generated by the permutations and the ordered injections, or just by those ordered injections  $\si_i\colon \bf{n-1}\rtarr \bf{n}$, $1\leq i\leq n$, that skip $i$ in the target.  

Let $(\sV, \otimes, \mI)$ be a complete and cocomplete symmetric monoidal category; completeness will not be important to us in general, but in addition to cocompleteness we require that the functor $X \otimes -$ commutes with finite colimits, as holds automatically if $\sV$ is closed.  The category $\sV$ has three distinguished objects:
\begin{enumerate}[(i)]
\item $\emptyset$, the initial object, that is, the coproduct of the empty set of objects.
\item $\ast$, the terminal object, that is, the product of the empty set of objects.
\item $\mI$, the unit object for the product $\otimes$ on $\sV$.
\end{enumerate}

Let $\LA_{>0}$ be the category with objects the sets $\{1,\cdots,n\}$ for $n\geq 1$ and morphisms the injections.
A covariant functor $\LA_{>0}\rtarr \sV$ can be identified with a covariant functor $\LA \rtarr \sV$ with zeroth value $\emptyset$.  
A contravariant functor $\LA_{>0} \rtarr \sV$ can be identified with a contravariant functor $\LA\rtarr \sV$ with zeroth value $\ast$.   

\begin{rem}  The notation $\LA$ was introduced in 1978 \cite{CMT}, where a contravariant functor defined on $\LA$ is called a 
coefficient system.  In  a 1997 paper by Berger \cite{Berger}, the notation $\LA$ is used for our $\LA_{>0}$ and a contravariant functor 
defined on $\LA_{>0}$ is called a preoperad; those conventions are also followed in the 2002 book of Markl, Shnyder, and Stasheff \cite{MSS}.  In much recent work, by Church, Farb, Ellenberg, and others, $\LA_{>0}$ is denoted $FI$  \cite{CEF, CEFN}.
 \end{rem}

Of course, $\ast= \mI$ when $\sV$ is cartesian monoidal.  However, in many algebraic examples, such as modules over a commutative ring, we instead have $\emptyset=\ast$, and that object is then a zero object and is denoted $0$.  It is usual to define a based object $X$ in $\sV$ to have a map $\ast\rtarr \sV$, but that is clearly not a useful concept in such algebraic examples.   The following definition gives the appropriate version of based objects in our general context.

\begin{defn}\label{unital}  Define a based object $(X,\et)$ in $\sV$ to be an object $X$ of $\sV$ together with a map $\et\colon \mI\rtarr X$, which we call a base map; we abbreviate notation to $X$ when $\et$ is understood.   Let $\sV_{\mI}$ denote the category of based objects, that is, objects under $\mI$.
\end{defn}

Choosing one of our three choices for $\sC(0)$ defines three special kinds of covariant or contravariant functor $\sC$ from $\LA$ to $\sV$.  The literature is marred by conflicting names for different choices.   When $\sV$ is cartesian monoidal, it is usual to say that $\sC$ is {\em reduced} if $\sC(0)=\ast$.  Following \cite{MayOp1}, we say that $\sC$ is {\em unital} if $\sC(0) = \mI$.\footnote{Some authors define unital operads to be those that have a unit map $\mI\rtarr \sC(1)$.  For us, {\em all} operads are required to be unital in that sense,  as in the original definition. Such a unit map is essential to the connection between operads and monads that is the source of the name ``operad".}   Thus reduced and unital are the same when $\sV$ is cartesian monoidal, but they are not the same in general and we shall not use the term reduced in this paper.  

\begin{defn}  We say that an operad $\sC$ in $\sV$ is {\em based} if $\sC(0)$ is based; we say that $\sC$ is unital if $\sC(0)= \mI$.
\end{defn}

\begin{rem}  Operads of spaces that are based but not necessarily unital play a prominent role in \cite{BBPTY}. Other examples appear in \cite[\S7]{GM3}.   Endomorphism operads of based objects (such as based spaces) are always based but almost never unital.  We shall find a context in which endomorphism operads in enriching categories can be unital in \autoref{VWcon}, and an example in \autoref{fact}  will play a key role in factorization homology.  
\end{rem}

Operads with $\sC(0)=\emptyset$ appear naturally when dealing with algebraic structures that lack unit objects, such as Lie algebras.  However, our focus in this paper is on algebraic structures that do have unit objects, and there we take $\sC(0) = \mI$.   We focus on unital operads here since we want a unique choice of base object.  However, most of our results work more generally for based operads.  

\begin{rem} Terminology in the literature is quite inconsistent. In some sources, such as the book \cite{MSS} and Ching's paper \cite{Ching}, operads are defined to start with $\sC(1)$, with no term $\sC(0)$.  In \cite{Ching}, operads without zero terms are said to be unital if $\sC(1) = \mI$.  That gives a quite different context from the one we study here.  In the language of Fresse \cite[1.1.19]{Fresse}, followed more recently in \cite{GKRW}, those non-unital operads with $\sC(0) = \emptyset$ or, equivalently, with no zero term are called non-unitary operads, and our unital operads are called unitary operads; Fresse defines connected operads to be what Ching calls unital operads.   Berger and Moerdijk \cite{BMAx}, among others, use the term reduced for what we are calling unital. \end{rem}

The purpose of defining operads is to define algebras over them.   Our initial focus on the choice of $\sC(0)$ is in part motivated by the following observation.

\begin{lem}  For an operad $\sC$, the object $\sC(0)$ of $\sV$ is a $\sC$-algebra, and it is an initial object in the category of $\sC$-algebras in $\sV$.
\end{lem}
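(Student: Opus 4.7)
The plan is to construct the $\sC$-algebra structure on $\sC(0)$ from the operadic composition maps themselves. For each $k\geq 0$, the composition $\gamma$ specialized to all inner arities zero furnishes a map
$\gamma\colon \sC(k)\otimes \sC(0)^{\otimes k}\to \sC(n_1+\cdots+n_k)=\sC(0)$, and I would take these as the action maps $\theta^{\sC(0)}_k$. The associativity, unit, and equivariance axioms for a $\sC$-algebra then follow by specializing the corresponding operad axioms for $\gamma$ to the case in which all inner arities vanish, so nothing more than bookkeeping is required.

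Next, for any $\sC$-algebra $(A,\theta^A)$, I produce a morphism $\bar\theta\colon \sC(0)\to A$ from the nullary action $\theta^A_0\colon \sC(0)\otimes \mI\to A$, precomposed with the canonical unit isomorphism $\sC(0)\cong \sC(0)\otimes \mI$. To see that $\bar\theta$ is a morphism of $\sC$-algebras, I would check commutativity of the square whose legs are $\theta^{\sC(0)}_k=\gamma$, $\bar\theta$, $\mathrm{id}\otimes\bar\theta^{\otimes k}$, and $\theta^A_k$; this is precisely the associativity axiom for $(A,\theta^A)$ applied with all inner arities $n_i=0$, the inner nullary operations taking no input from $A$ and the outer composite $\gamma(c;d_1,\ldots,d_k)\in \sC(0)$ being pushed to $A$ by $\theta^A_0=\bar\theta$.

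For uniqueness, let $f\colon \sC(0)\to A$ be any $\sC$-algebra morphism. The algebra-map compatibility at $k=0$ reads $f\circ \theta^{\sC(0)}_0=\theta^A_0\circ(\mathrm{id}_{\sC(0)}\otimes f^{\otimes 0})$; since $f^{\otimes 0}=\mathrm{id}_{\mI}$ and $\theta^{\sC(0)}_0\colon \sC(0)\otimes \mI\to \sC(0)$ is the unit isomorphism (the $k=0$ instance of the operad unit axiom for $\gamma$), this forces $f=\bar\theta$.

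The conceptual crux is noticing that the $k=0$ specialization of $\gamma$ plays two roles simultaneously: it endows $\sC(0)$ with its $\sC$-algebra structure whose $0$-ary component is a unit isomorphism, and, applied to an arbitrary $A$, it produces and pins down the unique $\sC$-algebra map out of $\sC(0)$. I do not anticipate a substantive obstacle beyond careful arity bookkeeping in the operad and algebra axioms.
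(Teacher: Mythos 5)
Your proof is correct and follows essentially the same approach as the paper's: endow $\sC(0)$ with the $\sC$-algebra structure $\theta^{\sC(0)}_k=\gamma$ and observe that the $0$-ary component $\theta^A_0$ of any $\sC$-algebra action supplies the unique $\sC$-algebra map $\sC(0)\to A$. You have simply spelled out the ``details immediate from the definitions'' that the paper leaves to the reader, including the correct observation that the $k=0$ specialization of the associativity axiom gives both the morphism property of $\bar\theta$ and, combined with the $k=0$ case of the unit axiom, the uniqueness.
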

\begin{proof}  The structure maps $\ga\colon \sC(j)\otimes \sC(0)^{\otimes j} \rtarr \sC(0)$ give the action of $\sC$ on $\sC(0)$.  Using the standard convention that $X^0 = \mI$, we see that the component $\tha_0\colon  \sC(0) \iso \sC(0)\otimes X^0\rtarr X$ of an action $\tha$ of $\sC$ on $X$ gives the unique map  of $\sC$-algebras   $\sC(0)\rtarr X$.   Details are immediate from the definitions of operads and their algebras \cite{MayOp1}. 
\end{proof}

\begin{rem}   For an operad $\sC$ we can obtain a non-unitary operad $\sC^{-}$ by replacing $\sC(0)$ with $\sC^{-}(0) = \emptyset$.  Its algebras are the non-unital $\sC$-algebras.   For a non-unitary operad $\sC$, we can attempt to obtain a unital operad $\sC^{+}$ by replacing $\sC(0) = \emptyset$ with $\sC^{+}(0) = \mI$, but that does not always give an operad.  This idea is used in \cite{GKRW}.
\end{rem}

We give names for some of the categories that will be of interest to us.

\begin{notn}\label{notn1}  Let $\LA[\sV]$ and $\LA^{op}[\sV]$ denote the respective categories of covariant and contravariant functors 
$\LA\rtarr \sV$ and define  $\SI[\sV]$ and $\SI^{op}[\sV]$ analogously.   Let $\LA[\sV_{\mI}]$ and $\LA^{op}_{\mI}[\sV]$ denote the respective categories of unital covariant and unital contravariant functors $\LA\rtarr \sV$; in both cases, the unital condition requires a 
functor $\sD$ to send $\bf 0$ to $\mI$.   In the covariant case, but not in the contravariant case, use of the unique morphism 
$\mathbf{0}\rtarr \bn$ in $\LA$ ensures that the functor $\sD$ takes values in $\sV_{\mI}$, hence the difference of notation depending on the variance.
\end{notn}

\begin{notn}\label{notn2} We call \emph{contravariant} functors defined on $\SI$ symmetric sequences or $\SI$-sequences.  We introduce the name $\LA$-sequence for a \emph{contravariant} functor $\sD$ defined on $\LA$ together with a base map 
 $\et\colon \mI \rtarr \sD (\mathbf 0)$, and we write 
$\LA^{op}[\sV]_{\mI_0}$ for the category of $\LA$-sequences, where the morphisms are natural transformations that restrict to morphisms under $\mI$ at level $\mathbf{0}$. The notation $\mI_0$ indicates the $\LA$-sequence that is $\mI$ at level $\mathbf{0}$ and $\emptyset$ at levels $\bn$ for $n>0$.  A map  $\mI\rtarr \sD (\mathbf{0})$ is the same as a map of $\LA$-sequences $\mI_0\rtarr \sD$, and $\mI_0$ plays the  same role in $\LA^{op}[\sV]_{\mI_0}$ that $\mI$ plays in $\sV_{\mI}$.   We say that a $\LA$-sequence $\sD$ is unital if $\et\colon \mI\rtarr  \sD (\mathbf{0})$ is the identity map. We have an inclusion of categories  $\LA^{op}_{\mI}[\sV]\subset \LA^{op}[\sV]_{\mI_{0}}$. 
\end{notn}

We recall the following observation in \autoref{Cstar}.

\begin{lem}  A based operad $\sC$ in $\sV$ has an underlying $\LA$-sequence, hence a unital operad has an underlying unital $\LA$-sequence.
\end{lem}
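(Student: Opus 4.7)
The plan is to extend the existing contravariant $\SI$-action on $\sC$ to a contravariant action of all of $\LA$, then pair this with the given base map $\et\colon \mI \to \sC(\mathbf{0})$. Since $\LA$ is generated by permutations together with the elementary ordered injections $\si_i\colon \mathbf{n-1}\to\mathbf{n}$, it suffices to define pullback maps $\si_i^*\colon \sC(\mathbf{n}) \to \sC(\mathbf{n-1})$ and to check their compatibility with the symmetric group actions and among themselves.

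Define $\si_i^*$ by using the operad composition $\ga$ to plug the base point into the $i$th slot:
\begin{equation*}
\si_i^* \colon \sC(\mathbf{n}) \cong \sC(\mathbf{n}) \otimes \mI^{\otimes n} \to \sC(\mathbf{n}) \otimes \sC(\mathbf{1})^{\otimes(i-1)} \otimes \sC(\mathbf{0}) \otimes \sC(\mathbf{1})^{\otimes(n-i)} \xrightarrow{\ga} \sC(\mathbf{n-1}),
\end{equation*}
where the middle map inserts the operad unit $\mI \to \sC(\mathbf{1})$ in positions $\ne i$ and the base map $\et$ in position $i$.

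One then verifies the defining relations of $\LA$: (a) equivariance, that is, the compatibility $\si_i^* \circ \tau^* = (\tau')^* \circ \si_{\tau(i)}^*$ for a permutation $\tau$ of $\mathbf{n}$ and the induced permutation $\tau'$ of $\mathbf{n-1}$; and (b) the simplicial-type identity $\si_j^* \circ \si_i^* = \si_{i-1}^* \circ \si_j^*$ for $j < i$. Relation (a) is immediate from the $\Sigma$-equivariance axiom of the operad, applied to the configuration that inserts one $\et$ and $n-1$ copies of the unit map. Relation (b) unwinds, via the associativity axiom and the unit axiom for $\ga$, to the statement that inserting $\et$ at positions $i$ and $j$ in the two possible orders (together with $n-2$ unit maps elsewhere) produces equal composites $\sC(\mathbf{n}) \to \sC(\mathbf{n-2})$.

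Finally, the base map $\et\colon \mI \to \sC(\mathbf{0})$ is part of the based operad structure by hypothesis, so together with the functor $\LA^{op} \to \sV$ just constructed we obtain an underlying $\LA$-sequence in the sense of \autoref{notn2}. When $\sC$ is unital, $\sC(\mathbf{0}) = \mI$ and $\et = \mathrm{id}$ by definition, so the resulting $\LA$-sequence lies in $\LA^{op}_{\mI}[\sV]$ and is itself unital. The main obstacle is checking (b); this is a diagram chase whose bookkeeping is exactly what the operad associativity and unit axioms encode.
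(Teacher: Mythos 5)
Your proof takes exactly the paper's approach: define $\si_i^*\colon \sC(k)\to\sC(k-1)$ by inserting the base map $\et$ in the $i$th slot and the operad unit $\io$ elsewhere, then apply $\ga$. The paper's proof actually stops at the definition of the $\si_i^*$ without spelling out the verification of relations (a) and (b); your additional remarks on why equivariance and the simplicial-type identities follow from the operad axioms are correct and a useful supplement.
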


\begin{rem} The forgetful functor from operads to symmetric sequences and the forgetful functor from unital operads to unital $\LA$-sequences have left adjoint free operad functors.  These are studied in \cite[\S 1.2, \S  2.3, and  \S A.3 ]{Fresse}. 
\end{rem}

A theme of this paper is an analogy between the relationship that holds between unbased and based spaces in topology and the relationship that holds between $\SI$-sequences and $\LA$-sequences, especially unital ones, in our general operadic context.  For reasons analogous to those for focusing  on based rather than unbased spaces in much of algebraic topology, it is natural to focus on $\LA$-sequences rather than $\SI$-sequences in much of operad theory.  

In \autoref{prel}, we briefly recall some categorical definitions, focusing on tensor products of functors, which are examples of left Kan extensions.
In \autoref{mon1}, we give preliminaries about the structure of $\LA^{op} [\sV]_{\mI_{0}}$ and its subcategory $\LA^{op}_{\mI}[\sV]$ as symmetric monoidal categories under Day convolution,
which is a standard example of a left Kan extension. In \autoref{comb}, we prove the surprising categorical fact that Day convolution of $\LA$-sequences agrees with Day convolution of their underlying $\SI$-sequences; see \autoref{boxbox}.  In \autoref{mon}, we give $\LA^{op}[\sV]_{\mI_0}$ and 
its subcategory $\LA^{op}_{\mI}[\sV]$ different monoidal structures, not symmetric,  just as Kelly gave $\SI^{op}[\sV]$ a monoidal structure. Kelly's theorem can be stated as follows.  

\begin{thm}\label{Kelly0}[Kelly]  The category of operads in $\sV$ is isomorphic to the category of monoids in the monoidal category $\SI^{op}[\sV]$ of $\SI$-sequences.
\end{thm}

The relevant product in $\SI^{op}[\sV]$ is often called the composition product, but we shall call it the Kelly product, in honor of its inventor.
Using \autoref{boxbox}, we shall prove the following analog in \autoref{oper}.  

\begin{thm}\label{FRP} The category of based operads in $\sV$ is isomorphic to the category of monoids in the monoidal category 
$\LA^{op}[\sV]_{\mI_0}$ of $\LA$-sequences.  Therefore the category of unital operads in $\sV$ is isomorphic to the category of monoids in the monoidal category $\LA^{op}_{\mI}[\sV]$ of unital $\LA$-sequences.
\end{thm}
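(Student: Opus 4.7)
The plan is to reduce the theorem to Kelly's theorem (\autoref{Kelly}) via \autoref{boxbox}. The composition product on $\LA^{op}[\sV]_{\mI_0}$ defined in \autoref{mon} is built from the Day convolution of $\LA$-sequences, and \autoref{boxbox} identifies this Day convolution with the Day convolution of the underlying $\SI$-sequences. Consequently the composition product restricts on underlying $\SI$-sequences to the Kelly product, and the forgetful functor $\LA^{op}[\sV]_{\mI_0} \to \SI^{op}[\sV]$ is strong monoidal for these products.

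In the forward direction, given a monoid $\sC$ in $\LA^{op}[\sV]_{\mI_0}$, the underlying $\SI$-sequence inherits a monoid structure in $\SI^{op}[\sV]$ and hence is an operad by Kelly's theorem. By definition every object of $\LA^{op}[\sV]_{\mI_0}$ carries a base map $\et \colon \mI \to \sC(\mathbf{0})$, making this operad based. In the reverse direction, given a based operad $\sC$ with base $\et$, one extends its symmetric sequence to a $\LA$-sequence by declaring the action of the generating injection $\si_i \colon \mathbf{n-1} \to \mathbf{n}$ to be $\ga(-; u, \ldots, u, \et, u, \ldots, u) \colon \sC(n) \to \sC(n-1)$, with $\et$ in the $i$-th slot and the operadic unit $u \colon \mI \to \sC(\mathbf{1})$ in the remaining slots. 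The relations among the $\si_i^*$ and their compatibility with permutations follow from operad associativity, unitality, and equivariance, and the operad composition $\ga$ reassembles into the required multiplication $\mu \colon \sC \circ \sC \to \sC$.

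The main verification — and the principal obstacle — is that these two assignments are mutually inverse and that the multiplication $\mu$ arising from a based operad is genuinely a morphism of $\LA$-sequences, i.e.\ compatible with the $\si_i^*$-actions on source and target. Concretely, one must compute the $\si_i$-action on $\sC \circ \sC$ through the Day convolution structure and verify that $\mu$ intertwines it with the $\si_i$-action on $\sC$; conversely, one must show that the $\si_i$-action of any monoid in $\LA^{op}[\sV]_{\mI_0}$ is necessarily of the form prescribed above. In both directions the key input is that $\LA$ is generated by permutations together with the $\si_i$, combined with operad associativity. The unital statement is then immediate, since a $\LA$-sequence is unital exactly when $\et = \mathrm{id}$, equivalently $\sC(\mathbf{0}) = \mI$.
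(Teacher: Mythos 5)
Your overall strategy matches the paper's: reduce to Kelly's theorem via \autoref{boxbox} and the description of $\LA$-sequences from an operad in \autoref{Cstar}. However there is a real error at the pivot of your argument. You conclude from \autoref{boxbox} that ``the composition product restricts on underlying $\SI$-sequences to the Kelly product, and the forgetful functor $\LA^{op}[\sV]_{\mI_0} \to \SI^{op}[\sV]$ is strong monoidal.'' This does not follow, and it is false. \autoref{boxbox} identifies the Day convolutions $\boxtimes_{\SI}$ and $\boxtimes_{\LA}$, so the \emph{inner} object $\sE^{\boxtimes k}(\bn)$ is the same whichever category you compute it over. But the Kelly product is a coend $\int^{\bk} \sD(\bk)\otimes \sE^{\boxtimes k}(\bn)$, and that coend is taken over $\SI$ in one case and over $\LA$ in the other. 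Since $\LA$ has strictly more morphisms than $\SI$, the coend over $\LA$ imposes strictly more identifications: $\sD \odot_{\LA}\sE$ is a \emph{proper quotient} of $\sD\odot_{\SI}\sE$, not isomorphic to it. The forgetful functor is lax monoidal (via the quotient map $q\colon \sD\odot_{\SI}\sE \rtarr \sD\odot_{\LA}\sE$), not strong monoidal. This is exactly the point the paper's proof is built around, and which your write-up never surfaces.

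As a consequence, your description of ``the main verification'' misses the essential step. You say one must check that $\mu$ is compatible with the contravariant $\si_i^*$-actions. That is necessary, but it is secondary. The genuine obstacle is showing that the Kelly-monoid product $\ga\colon \sC\odot_{\SI}\sC\rtarr\sC$ coequalizes the extra relations that define the quotient $\sC\odot_{\LA}\sC$, so that it factors as $\mu\com q$ for a unique $\mu$. These relations are generated by the \emph{covariant} action of the injections $\si_i\colon \bk \rtarr \bk+\mathbf{1}$ in the coend variable, not by the contravariant $\LA$-structure you focus on; verifying they are coequalized uses \autoref{Cstar} together with the unit and associativity axioms of the operad. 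Your forward direction (monoid in $\LA^{op}[\sV]_{\mI_0}$ gives a based operad) survives, since lax monoidality already sends monoids to monoids, but your stated justification is wrong, and your reverse direction leaves the real work both undone and misdescribed.
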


\begin{rem}\label{senior}  The senior author glibly asserted the second statement, without proof, in 1997 \cite[p. 5]{MayOp1}.   The proof is not as obvious as he thought then.  
\end{rem}

Recall that the original point of the name ``operad'' was that an operad $\sC$ in a suitable ground category  $\sW$ has an associated monad $C$ in $\sW$ such that the categories of $\sC$-algebras and $C$-algebras are isomorphic \cite{MayGeo, MayOp2}. We relate the Kelly product to the monadic point of view in \autoref{mmcomp}.  The point of the cited isomorphism of categories is that it allows exploitation of the two sided {\em monadic} bar construction to study algebras over operads.   Topologically, that is the context for the senior author's 1970's  approach to iterated loop space theory, but many other applications have arisen since, such as the geometric approach to factorization homology, which we shall discuss briefly  in \autoref{fact}.  

The choice of $\sW$ matters, and there can be more than one choice leading to monads in different ground categories with isomorphic categories of algebras \cite[Section 4]{Rant1}.  This typically happens when we start with a unital operad 
$\sC$  and compare the associated monads  in $\sV$ and in $\sV_{\mI}$.   One way of thinking about the difference is that in one we are implicitly thinking of an operad $\sC$ as a monoid in $\SI^{op}[\sV]$ and in the other we are implicitly thinking about $\sC$ as a monoid in 
$\LA^{op}_{\mI}[\sV]$. 

We now have an embarrassment of riches, explored in \autoref{bar}.  The two-sided {\em monoidal} bar construction first appeared implicitly in 1940's homological work of Eilenberg and MacLane.  In any monoidal category, $(\sW,\otimes, \mI_{\sW})$ say,  we can start with a monoid   $M$ in $\sW$ with a right action on an object  $Y$ and a left action on an object $X$.  We then have a simplicial object 
\[   B_*(Y,M,X) \]
in $\sW$  whose $q$-simplex object is 
\[  Y \otimes M^{\otimes q} \otimes  X. \]
On $q$-simplices, $d_0$ is given by the action of $M$ on $Y$,  $d_i$ for $1\leq i \leq q-1$ is given by the product  on $M$ in the $i$th position, $d_q$ is given by the action of $M$ on $X$, and $s_i$ for $0\leq i\leq q$ is given by inserting the unit $\mI_{\sW} \rtarr M$ in the $i$th position.  In many interesting examples, there is a realization functor from simplicial objects in $\sW$ to $\sW$, such as geometric realization in topological contexts and totalization of simplicial chain complexes in homological contexts.   For an important example in which such a direct realization functor is of little or no interest,  take $\sW$ to be the monoidal category of graphs (in the categorical sense, with source and target) with a fixed set of objects; a monoid in $\sW$ is then a category, and right and left actions are contravariant and covariant functors; see \cite[\S9]{MayClass}.  We shall ignore realization until \autoref{fact}.

There is a considerable algebraic literature that explores bar constructions in
the context of operads considered as monoids in monoidal categories of symmetric
sequences.  Fresse's book \cite{Fresse0} and Loday and Vallette's book \cite{LV}
are standard references.  Ching \cite{Ching} and others have studied related
variants.   However, as far as we know, our variant of working in the
  monoidal category of $\LA$-sequences is new. 

We are especially interested in understanding the relationship between the operadic specialization of {\em monoidal} simplicial bar constructions and the {\em monadic} simplicial bar constructions $B_*(F, C, X)$ that were central to the original theory of operads.   Here $X$ is a $C$-algebra and $F$ is a $C$-functor landing in some category, not necessarily the ground category of the monad $C$.   The two bar constructions look quite different, and the comparision is not at all obvious at first sight.  A precursor to this paper by the second author  \cite{Rachel} explores this in the context of symmetric sequences.  Although the monoidal bar construction admits greater generality than we shall pursue, we shall see that in practice it is often a specialization of the monadic bar construction. That specialization seems to be insufficiently general for the relevant topological applications; see \autoref{intex}.   Pursuing comparisons in algebraic contexts deserves further study.

It is classical that, when $\sV$ is closed, an algebra $X$ over an operad $\sC$ is the same as a map of operads from $\sC$ to the endomorphism operad $\End(X)$ of $X$.   Heading towards factorization homology, we show in \autoref{VWcon} how to generalize this statement to operads in a category $\sW$ enriched in $\sV$ and we investigate the analogous statement for unital operads and their algebras.   In \autoref{fact}, we specialize these ideas  to give a conceptual framework for geometric factorization homology in the equivariant context developed by the third author and for her proof of nonabelian Poincar\'e duality.  The construction of the scanning map that gives that duality isomorphism seems to us to be of particular interest. It is worked out conceptually in \autoref{SCAN}.

In the brief \autoref{remarks}, we remark on where the ideas here, in particular the focus on $\LA$-sequences rather than just on $\SI$-sequences, are relevant to other work  in the literature.  The topics are categories of modules over the commutativity operad, model categories of operads, generalizations of the Kelly product, and the role of $\LA$ in the theory of dendroidal sets.

While our focus is on operads in $\sV$ viewed as monoids in non-symmetric monoidal categories, our details have led us to a better understanding of an alternative perspective that views operads as symmetric monoidal categories enriched in $\sV$.   This point of view is folklore that is articulated in concrete terms in \cite[Proposition 3.1]{BBPTY2}.  It is also at least implicit in the specialization to operads of the symmetric monoidal envelopes of multicategories (e.g.  \cite[p. 291]{CGw}), which is itself very closely related to the earlier construction of categories of operators associated to operads \cite{MT}.  We discuss these ideas in the Appendix, \autoref{SMC}, describing symmetric monoidal envelopes in \autoref{SME} and their relationship to categories of operators in \autoref{COO}.

We emphasize that  $\sV$ is general throughout.  Our immediate interest is in spaces and $G$-spaces for a group $G$, but we envision algebraic and homological applications.

\section*{Acknowledgement}
The authors thank the referee for many useful suggestions to improve the paper.

\section{Categorical preliminaries}\label{prel}

We shall work throughout  in a bicomplete symmetric monoidal category $(\sV,\otimes, \mI)$ such that $\otimes$ commutes with finite colimits.  Since familiarity with certain standard categorical constructions is essential, we give a brief review of what we shall need.   Mac\,Lane advertised that ``all concepts are Kan extensions''.  We focus on those left Kan extensions which are tensor products of functors, and those are examples of coends.  Thus for us all concepts are coends or, more particularly, tensor products of functors.  It will be helpful to us to be maximally explicit and minimally abstract since the details of the constructions will matter.  See  \cite{Mac, Riehl} for more details of the categorical context.

\begin{defn}\label{coend}  Let  $\GA$ be a category and let  $H\colon \GA^{op}\times \GA \rtarr \sV$
  be a functor.   The {\em coend}  $\int^{c\in \GA} H(c,c)$ is the coequalizer in $\sV$ displayed in the diagram
\[  \xymatrix{
\coprod_{f\colon a \rtarr b}  H(b,a) \ar@<1ex>[r]^-{\al}  \ar@<-1ex>[r]_-{\be} &  \coprod_{c} H(c,c) \ar[r] & \int^{c\in \GA} H(c,c)\\} \]
where $a,b,c$ range over the objects of $\GA$, $f$ ranges over the morphisms of $\GA$, and where, on the component indexed by $f$, 
\[  \al = H(f,\id)\colon H(b,a) \rtarr H(a,a) \ \ \text{and} \ \  \be = H(\id,f)\colon H(b,a) \rtarr H(b,b).   \]
We abbreviate notation to $\int^{\GA}H$ whenever convenient.  
\end{defn}

Of course, the universal property can be read off from the definition, which depends only on the assumption that $\sV$ is cocomplete.
Using the further assumption that $\sV$ is monoidal (ignoring symmetry), we arrive at the examples of interest.

\begin{defn}\label{coendtoo}   Define the {\em tensor product}  $G\otimes_{\GA} F$ of functors $G\colon \GA^{op} \rtarr \sV$ and $F\colon \GA\rtarr \sV$ by
\[  G\otimes_{\GA} F =  \int^{c\in \GA} G(c)\otimes F(c); \]
Thus $H$ here is $G\otimes F$, namely the composite  $\otimes \com (G\times F)\colon  \GA^{op}\times \GA\rtarr \sV\times \sV\rtarr \sV$.
\end{defn}

Again, the universal property can be read off from the definition.   To be explicit about  general left  Kan extensions, we use the following observation to make sense of the tensor of objects of $\sV$ with sets.

\begin{rem}\label{set}  We embed the category of sets in $\sV$ by the functor that sends a set $S$ to the coproduct, denoted $\mI[S]$, of copies of $\mI$ indexed on the elements of $S$.  However, we shall use this implicitly and just regard sets as objects of $\sV$.  Then $\sV$ is tensored over sets via $S\otimes X$ for  $X$ in $\sV$. In particular, we view the morphism sets  of (ordinary locally small) categories as objects of $\sV$ without change of notation.   We shall instead write $X\otimes S$ when that is more convenient.
\end{rem}

\begin{defn}  For (covariant) functors  $F\colon \GA \rtarr \sV$ and $\om\colon \GA \rtarr \PS$, define the {\em left Kan extension} of $F$ over $\om$ by
\[  (\mathbf{Lan}_{\om} F)(d) = \int^{c\in \GA}  \PS(\om(c),d)\otimes F(c) \]
on objects $d\in \PS$.  Equivalently,
\[  (\mathbf{Lan}_{\om} F)(d) =  \PS(\om(-),d)\otimes_{\GA}  F. \]
Then the diagram
\[ \xymatrix{
\GA \ar[r]^{F} \ar[d]_{\om} & \sV \\
\PS\ar@{-->}[ur]_{\mathbf{Lan}_{\om} F} \\} \] 
commutes up to the natural transformation $\et_F\colon F \Rightarrow  \mathbf{Lan}_{\om} F\com \om$ that identifies $F(c)$ with 
$\id_{\om(c)}\otimes F(c)$.  The universal property can be expressed as an adjunction between functor categories
\[  \sV^{\PS}(\mathbf{Lan}_{\om} F, G) \iso \sV^{\GA}(F,G\com \om), \]
where $G$ is a (covariant) functor  $\PS \rtarr \sV$; the $\et_F$ are the components of the unit $\et$ of the adjunction.
\end{defn}

\begin{rem}  Observe that covariant functors  $\GA^{op}\rtarr \PS^{op}$ are the same as covariant functors $\GA\rtarr \PS$.  We shall often start with contravariant functors $F\colon \GA^{op}\rtarr \sV$ and covariant functors  $\GA^{op}\rtarr \PS^{op}$. It is then convenient to reverse the order of variables, writing
\[   (\mathbf{Lan}_{\om} F)(d) = \int^{c\in \GA}  F(c)\otimes  \PS(d,\om(c)) = F\otimes_{\GA} \PS(d,\om(-)).  \]
\end{rem}

Using that notation,  the co-Yoneda lemma reads as follows in terms of coends. 
\begin{lem}\label{coYon}
For a functor $G\colon \GA^{op} \rtarr \sV$,
$$G \iso \int^{c\in \GA} G(c) \otimes \GA(-, c)$$
\end{lem}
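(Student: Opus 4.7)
The plan is to verify the co-Yoneda formula componentwise: for each fixed $d \in \GA$, I will exhibit mutually inverse natural maps between $G(d)$ and $\int^{c\in \GA} G(c) \otimes \GA(d, c)$, relying directly on the coequalizer description from \autoref{coend}.

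First I would define the ``evaluation'' map out of the coend. For each object $c$, the morphism $\ev_c \colon G(c) \otimes \GA(d,c) \rtarr G(d)$ is the one classically expressed as $x \otimes g \mapsto G(g)(x)$, using that $G$ is contravariant so $G(g)\colon G(c)\rtarr G(d)$ for $g\colon d\rtarr c$. I would then check that the family $\{\ev_c\}$ coequalizes the two maps $\alpha$ and $\beta$ of \autoref{coend}: applied to an element $x \otimes g$ with $x\in G(b)$ and $g\colon d\rtarr a$, both composites send it to $G(fg)(x)\in G(d)$, using contravariant functoriality on one side and right-composition with $f$ on the other. By the universal property of the coequalizer this produces a well-defined map $\varepsilon_d \colon \int^{c\in\GA} G(c)\otimes \GA(d,c) \rtarr G(d)$, and naturality in $d$ is automatic from the construction.

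In the other direction, I would use the map $\iota_d \colon G(d) \iso G(d) \otimes \GA(d,d) \rtarr \int^{c\in\GA} G(c)\otimes \GA(d,c)$ obtained by tensoring with $\id_d$ and including into the defining coproduct. Checking $\varepsilon_d \circ \iota_d = \id_{G(d)}$ is immediate, since the composite is $x \mapsto x \otimes \id_d \mapsto G(\id_d)(x) = x$. The reverse composite $\iota_d \circ \varepsilon_d$ is identified with the identity on the coend by exactly the coequalizer relation that defines it: for any $g\colon d\rtarr c$ and $x\in G(c)$, applying the $\alpha$/$\beta$ diagram to the element $x \otimes \id_d\in G(c)\otimes \GA(d,d)$ with the morphism $g$ shows that $G(g)(x)\otimes \id_d$ and $x\otimes g$ become equal in the coend, which is exactly the identity $\iota_d\circ \varepsilon_d(x\otimes g) = x\otimes g$.

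The proof is essentially bookkeeping, and there is no real obstacle beyond keeping track of the variances: $G$ is contravariant in $\GA$, $\GA(d,-)$ is covariant, and their tensor product is the functor on $\GA^{op}\times \GA$ whose coend is being computed. Once those variances are written out carefully using \autoref{coend}, the coequalizer relation supplies precisely the identification needed for the inverse, and naturality in $d$ follows from the functoriality of $\GA(-,c)$ in the first slot.
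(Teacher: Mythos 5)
Your proof is correct and is the standard argument; the paper states \autoref{coYon} without proof, treating it as background, so there is no text to compare against. Two small remarks worth making explicit given the level of generality here (arbitrary $\sV$, not sets): the ``element'' language should be read as acting on the components of $G(c)\otimes\GA(d,c)\iso\coprod_{g\colon d\rtarr c}G(c)$, using the embedding of sets into $\sV$ from \autoref{set} and the standing hypothesis that $\otimes$ preserves coproducts, so that $\ev_c$ is the map whose $g$-component is $G(g)\colon G(c)\rtarr G(d)$; and the verification that $\iota_d\circ\varepsilon_d=\id$ should be phrased as agreement after precomposition with the canonical (jointly epimorphic) maps $G(c)\otimes\GA(d,c)\rtarr\int^c G(c)\otimes\GA(d,c)$, which is exactly what your $\alpha/\beta$ computation on the component indexed by $g\colon d\rtarr c$ gives. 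With those glosses, the argument is complete.
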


\section{The Day convolution symmetric monoidal structures}\label{mon1}

We begin with the following observation.

\begin{lem}\label{Xstar}  If $X$ is a based object in $\sV$, then the tensor powers  $\Xk$, where $X^{\otimes 0} = \mI$,  give the values on objects of a covariant functor $X^{\otimes *}\colon \LA\rtarr \sV_{\mI}$.
\end{lem}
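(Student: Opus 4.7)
The plan is to define $X^{\otimes *}$ so that a based injection $f\colon \mathbf{m}\rtarr\mathbf{n}$ acts by rearranging the $m$ tensor factors to the positions prescribed by $f$ and inserting $\et\colon\mI\rtarr X$ at the $n-m$ positions outside the image of $f$.

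On objects I set $X^{\otimes *}(\mathbf{n}) := X^{\otimes n}$, with $X^{\otimes 0} = \mI$. A based injection $f\colon\mathbf{m}\rtarr\mathbf{n}$ restricts to an injection $\{1,\dots,m\}\hookrightarrow\{1,\dots,n\}$, and I define
\[  X^{\otimes f}\colon X^{\otimes m}\rtarr X^{\otimes n} \]
to be the composite that uses the bijection $\{1,\dots,m\}\iso f(\{1,\dots,m\})$ to move the $m$ tensor factors into their target positions via the symmetric braiding, and at each of the $n-m$ positions in $\{1,\dots,n\}\setminus f(\{1,\dots,m\})$ inserts $\et$, collapsing the extra copies of $\mI$ via the unit isomorphism.

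Functoriality is a routine check. Identities are clear. For composable $f\colon\mathbf{m}\rtarr\mathbf{n}$ and $g\colon\mathbf{n}\rtarr\mathbf{k}$, the $\et$-insertions produced by $X^{\otimes g}\com X^{\otimes f}$ occur precisely in the positions $\{1,\dots,k\}\setminus (g\com f)(\{1,\dots,m\})$, matching those of $X^{\otimes (g\com f)}$, while the permutation data compose correctly by symmetric monoidal coherence. Alternatively, since $\LA$ is generated by the symmetric groups together with the elementary injections $\si_i\colon\mathbf{n-1}\rtarr\mathbf{n}$, it suffices to check the cosimplicial-type relations among the $\si_i$ and their compatibility with permutations; these translate into evident identities between iterated insertions of $\et$ in $X^{\otimes n}$.

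For the $\sV_{\mI}$-valued refinement, the unique morphism $\mathbf{0}\rtarr\mathbf{n}$ in $\LA$ is sent to the map $\mI\rtarr X^{\otimes n}$ that inserts $\et$ at every position; this is the base map on $X^{\otimes n}$. Functoriality together with the uniqueness of morphisms out of $\mathbf{0}$ forces every $X^{\otimes f}$ to be a morphism under $\mI$, so the functor indeed lands in $\sV_{\mI}$. The main obstacle is the bookkeeping required by symmetric monoidal coherence when permutations and $\et$-insertions are performed simultaneously; this is handled cleanly by invoking Mac\,Lane's coherence theorem to reason as if $\sV$ were strict.
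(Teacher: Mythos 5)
Your proof is correct and follows essentially the same route as the paper's: define permutations by the symmetric braiding and the elementary injections $\si_i$ by inserting $\et$ in the appropriate slot, then note (as in \autoref{notn1}) that the unique map $\mathbf{0}\rtarr\bn$ supplies the base map, so the functor lands in $\sV_{\mI}$. The only difference is cosmetic: the paper defines the action only on generators and leaves the relations implicit, while you also spell out the formula on a general injection and sketch the coherence check.
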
 
Proof. We let $\SI_k$ act from the left on $X^{\otimes k}$ by permutation of variables. We define 
$\si_i \colon X^{\otimes k-1} \rtarr X^{\otimes k}$  to be
$$\id^{i-1}\otimes \et \otimes \id^{k-i} \colon X^{\otimes k-1} \iso X^{\otimes i-1} \otimes \mI \otimes X^{\otimes k-i} \rtarr X^{\otimes k}. $$
As in \autoref{notn1}, the resulting functor takes values in $\sV_I$. 

\begin{rem}\label{pedant}  The wedge $\vee$ of finite based sets gives $\LA$ and also $\LA^{op}$ a structure of permutative category. Pedantically, if $p+q = n$, write $\bf{p+q}$ for  $\bn$ regarded as the one-point union of the sets $\{0,1,\cdots,p\} $ and $\{0, p+1, \cdots, p+q\}$. Then the functor $\wed$ is specified by identifying the  wedge $\bp\wed \bq$ with the object $\bf{p+q}$ of $\LA$. The commutativity isomorphism of $\wed$ is given by the evident isomorphisms $\mathbf{p+q}\iso \mathbf{q+p}$.  Note that $X^{\otimes *}$ is a symmetric monoidal functor $\LA\rtarr \sV$.   Passage from $X$ to $X^{\otimes *}$ gives a covariant functor $\sV_{\mI}\rtarr \LA[\sV_{\mI}]$.  
\end{rem}

Functor categories with domain a small symmetric monoidal category and target a symmetric monoidal category are themselves symmetric monoidal under Day convolution. Our focus is on the following example.  Recall that $\mI_0(\mathbf{0}) = \mI$ and $\mI_0(\bn) = \emptyset$ for $n>0$.

\begin{prop}  Under Day convolution, denoted $\boxtimes$, the category $\LA^{op}[\sV]$ is symmetric monoidal with unit $\mI_0$.
\end{prop}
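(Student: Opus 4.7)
The plan is to recognize this as an instance of Day's general convolution theorem. By Remark \ref{pedant}, the wedge $\vee$ makes $(\LA, \vee, \mathbf{0})$ a permutative category, and by hypothesis $(\sV, \otimes, \mI)$ is bicomplete symmetric monoidal with $\otimes$ preserving colimits in each variable. Day's theorem then produces a symmetric monoidal structure on $[\LA^{op}, \sV] = \LA^{op}[\sV]$ whose tensor product is the left Kan extension along $\vee$, concretely
\[
(F \boxtimes G)(\bn) \;=\; \int^{\bp,\bq \in \LA} F(\bp) \otimes G(\bq) \otimes \LA(\bn, \bp \vee \bq),
\]
with unit the representable functor $\LA(-, \mathbf{0})$.

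First I would identify this unit with $\mI_0$. Since morphisms in $\LA$ are based injections, there is no injection $\bn \rtarr \mathbf{0}$ when $n > 0$, while $\LA(\mathbf{0}, \mathbf{0}) = \{\id\}$; under the embedding of \autoref{set} this gives $\LA(-, \mathbf{0}) \cong \mI_0$. Next I would verify the unit isomorphism directly: writing $\mI_0(\bq) \cong \LA(\bq, \mathbf{0}) \otimes \mI$ and substituting, one reduces the defining coend by two applications of \autoref{coYon}, first in $\bq$ (using $\bp \vee \mathbf{0} \cong \bp$) to obtain $\int^{\bp} F(\bp) \otimes \LA(\bn, \bp)$, and then in $\bp$ to recover $F(\bn)$. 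The same argument on the other side gives $\mI_0 \boxtimes F \cong F$.

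For associativity and symmetry I would use Fubini for coends to identify both $(F \boxtimes G) \boxtimes H$ and $F \boxtimes (G \boxtimes H)$ with the single triple coend
\[
\int^{\bp,\bq,\br} F(\bp) \otimes G(\bq) \otimes H(\br) \otimes \LA(\bn, \bp \vee \bq \vee \br),
\]
the comparison isomorphism being induced by the associator of $\vee$ in $\LA$. The symmetry $F \boxtimes G \cong G \boxtimes F$ is induced by the symmetry isomorphisms $\bp \vee \bq \cong \bq \vee \bp$ in $\LA$ together with the symmetry on $\otimes$ in $\sV$.

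The main technical obstacle is coherence: verifying the pentagon and hexagon for $\boxtimes$ at the level of coends. Rather than unwinding these by hand, I would invoke Day's coherence theorem, which asserts precisely that coherence for $(\LA, \vee)$ and $(\sV, \otimes)$ transfers to coherence for $(\LA^{op}[\sV], \boxtimes, \mI_0)$, so the proposition reduces to the already-established general result.
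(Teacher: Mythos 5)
Your proposal is correct and takes essentially the same route as the paper: the paper's text immediately preceding the proposition invokes the general principle that functor categories out of a small symmetric monoidal domain are symmetric monoidal under Day convolution, and then just asserts ``the unit is dictated by general theory and can be checked by a quick inspection of definitions.'' You fill in the details the paper omits --- identifying the unit as the representable $\LA(-,\mathbf{0})\cong\mI_0$ (correctly using that based injections $\bn\rtarr\mathbf{0}$ exist only for $n=0$), checking the unit isomorphisms by two co-Yoneda reductions, identifying both bracketings of the triple product with a single three-fold coend by Fubini, and deferring coherence to Day's theorem.

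One point worth flagging, though it is not a real error on your part: you state the hypothesis on $\sV$ as ``$\otimes$ preserving colimits in each variable,'' whereas the paper's standing hypothesis in \autoref{prel} is only that $X\otimes-$ commutes with \emph{finite} colimits. The general Day convolution theorem, as you invoke it, does require preservation of the relevant (a priori infinite) colimits, so your stated hypothesis is what the textbook theorem wants. The paper is somewhat loose here --- at the start of \autoref{comb} it asserts that its hypotheses give preservation of all coproducts --- and the gap is ultimately harmless because \autoref{boxbox} shows the coends in question collapse to finite coproducts. But strictly speaking you are smoothing over the same wrinkle the paper smooths over, just more visibly.
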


We recall the definition of $\boxtimes$. For $\sD$ and $\sE$ in  $\LA^{op}[\sV]$, we have their external tensor product
\[ \sD \overline{\otimes} \sE\colon \LA^{op} \times \LA^{op} \rtarr \sV \]
given by  $(\sD \overline{\otimes} \sE)(\bj,\bk) = \sD(\bj)\otimes \sE(\bk)$.   The Day convolution $\sD\boxtimes \sE$ is the left Kan extension
displayed in the diagram
\begin{equation}\label{Kan1}
\xymatrix{  \LA^{op}\times \LA^{op} \ar[rr]^-{\sD\overline{\otimes} \sE} \ar[d]_{\bigvee}& & \sV.\\
\LA^{op} \ar@{-->}[urr]_{\sD\boxtimes \sE} & & \\}
\end{equation}
That $\mI_0$ is the unit is dictated by general theory and can be checked by a quick inspection of definitions, starting from the observation that
\[  (\mI_0\overline{\otimes} \sE)(\bj,\bk) = \left \{\begin{array}{ll} 
\mI\otimes \sE(\bk)\iso \sE(\bk) & \mbox{if $j=0$} \\
 \emptyset & \mbox{if $j>0$}  
 \end{array} \right.  \]
 for any $\sE\in \LA^{op}[\sV]$.

We will be interested in both the universal property 
\begin{equation}\label{otimes}
\sV^{\LA^{op}}(\sD\boxtimes \sE, \sF)\iso \sV^{\LA^{op}\times \LA^{op}}(\sD\overline{\otimes} \sE,\sF\com \vee) 
\end{equation}
and the explicit construction of $\boxtimes$. 

\begin{lem}\label{subbox} The Day convolution on $\LA^{op}[\sV]$  gives $\LA^{op}[\sV]_{\mI_0}$ and its subcategory 
$\LA^{op}_{\mI}[\sV]$ symmetric monoidal structures with unit  $\mI_0$.  
\end{lem}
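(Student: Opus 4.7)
The plan is to recognize $\LA^{op}[\sV]_{\mI_0}$ as the coslice category $\mI_0 \downarrow \LA^{op}[\sV]$ and invoke the standard categorical principle that the coslice under the monoidal unit of a symmetric monoidal category $(\sC,\otimes,\mathbf{1})$ inherits a symmetric monoidal structure: the tensor of $(X,x)$ and $(Y,y)$ is $(X\otimes Y,\,(x\otimes y)\circ \lambda_{\mathbf{1}}^{-1})$, the unit is $(\mathbf{1},\id)$, and all coherence isomorphisms are inherited from $\sC$ (they intertwine the tensored base maps by naturality). Since the preceding proposition has established that $(\LA^{op}[\sV], \boxtimes, \mI_0)$ is symmetric monoidal, this immediately yields the structure on $\LA^{op}[\sV]_{\mI_0}$. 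For the subcategory $\LA^{op}_{\mI}[\sV]$ of strictly unital $\LA$-sequences, I will then verify closure under $\boxtimes$.

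The key concrete input, needed to make the base map on $\sD\boxtimes\sE$ explicit, is the identification $(\sD \boxtimes \sE)(\mathbf{0}) \iso \sD(\mathbf{0}) \otimes \sE(\mathbf{0})$. From the Kan extension formula for Day convolution,
\[ (\sD \boxtimes \sE)(\mathbf{0}) \iso \int^{(\bp, \bq) \in \LA \times \LA} \sD(\bp) \otimes \sE(\bq) \otimes \LA(\mathbf{0}, \bp \vee \bq). \]
Since $\LA(\mathbf{0}, \bp\vee\bq)$ is a singleton (the unique basepoint inclusion) for every $(\bp,\bq)$, the coend reduces to $\int^{(\bp,\bq)} \sD(\bp) \otimes \sE(\bq)$, which is the colimit of the contravariant functor $\sD\,\overline{\otimes}\,\sE$ on $\LA\times\LA$. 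Because $(\mathbf{0},\mathbf{0})$ is initial in $\LA \times \LA$, this colimit is simply $\sD(\mathbf{0}) \otimes \sE(\mathbf{0})$. The base map on $\sD\boxtimes\sE$ inherited from the coslice construction is then the composite $\mI \iso \mI \otimes \mI \xrightarrow{\,\et_\sD \otimes \et_\sE\,} \sD(\mathbf{0}) \otimes \sE(\mathbf{0}) \iso (\sD\boxtimes\sE)(\mathbf{0})$, as one expects.

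For the subcategory $\LA^{op}_{\mI}[\sV]$: if $\sD(\mathbf{0}) = \sE(\mathbf{0}) = \mI$ strictly, then $(\sD \boxtimes \sE)(\mathbf{0}) \iso \mI \otimes \mI \iso \mI$. Using this canonical isomorphism of $\sV$ to normalize the value at $\mathbf{0}$ renders $\sD\boxtimes\sE$ strictly unital, so the subcategory is closed under $\boxtimes$, with $\mI_0$ still serving as the unit and with all inherited coherence isomorphisms restricting.

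The main obstacle is the level-$\mathbf{0}$ identification $(\sD \boxtimes \sE)(\mathbf{0}) \iso \sD(\mathbf{0}) \otimes \sE(\mathbf{0})$, which hinges on the observation that the basepoint map triviality forces the coend to collapse to its value at the initial object $(\mathbf{0},\mathbf{0}) \in \LA \times \LA$. Once that identification is in hand, everything else is formal, depending only on the coslice-under-unit construction and coherence in $\sV$.
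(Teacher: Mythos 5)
Your proof is correct and takes essentially the same approach as the paper's, which simply asserts that the base maps of $\sD$ and $\sE$ induce a base map $\mI_0 \rtarr \sD\boxtimes\sE$ (citing the universal property or the explicit construction) and that the unital case follows from ``inspection of the explicit construction.'' You make both steps precise: the coslice-under-unit principle packages the first step, and your initial-object observation (since $\LA(\mathbf{0},-)$ is always a singleton, the coend defining $(\sD\boxtimes\sE)(\mathbf{0})$ collapses to the value $\sD(\mathbf{0})\otimes\sE(\mathbf{0})$ at the initial object $(\mathbf{0},\mathbf{0})$ of $\LA\times\LA$) is exactly the ``inspection'' for the second.
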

\begin{proof} It follows from either the universal property or the explicit construction of $\boxtimes$ that if $\sD$ and $\sE$ are based, then their base maps induce a base map $\mI_0\rtarr \sD\boxtimes \sE$.   If $\sD$ and $\sE$ are unital, then inspection of the explicit construction shows that 
$\sD\boxtimes\sE$ is also unital, meaning that $(\sD\boxtimes\sE)(\mathbf{0})$ can also be identified with $\mI$. 
\end{proof}

We introduce the following complement to Notations \ref{notn1} and \ref{notn2}. 

\begin{notn}\label{notn3}  Let  $i_0\colon \sV_{\mI} \rtarr \LA^{op}[\sV]_{\mI_0}$ be the inclusion of categories that 
sends $(X,\et)$ in $\sV_{\mI}$ to the functor $i_0X \colon \LA^{op} \rtarr \sV$ specified by 
$$(i_0 X)(\mathbf{0}) = X \ \ \text{and} \ \  (i_0X)(\bk) = \emptyset  \ \ \text{for} \ \ k>0; $$
the base map $\et\colon \mI \rtarr X$ determines the required base map $\et\colon \mI_0 \rtarr i_0X$. 
Let $p_0\colon  \LA^{op}[\sV]_{\mI_0}\rtarr \sV_{\mI}$ be the $0$th object functor.  Note that
$(i_0,p_0)$ is an adjoint pair,
\[    \LA^{op}[\sV]_{\mI_0}(i_0 X,\sD) \iso \sV_{\mI}(X,p_0\sD). \]  
\end{notn}

 \begin{lem}\label{strongp}  The functors $i_0$ and $p_0$ are  strong symmetric monoidal.
 \end{lem}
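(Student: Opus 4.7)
The plan is as follows. The unit conditions $i_0(\mI) = \mI_0$ and $p_0(\mI_0) = \mI$ hold on the nose, directly from the definitions in \autoref{notn3}. So the real content is to build natural isomorphisms
\[  i_0(X \otimes Y) \iso i_0 X \boxtimes i_0 Y \qquad \text{and} \qquad p_0(\sD \boxtimes \sE) \iso p_0 \sD \otimes p_0 \sE. \]
For both, I will work from the explicit coend formula
\[  (\sD \boxtimes \sE)(\bn) \iso \int^{(\bj,\bk)} \sD(\bj) \otimes \sE(\bk) \otimes \LA(\bn, \bj \vee \bk), \]
which is just the Kan extension \autoref{Kan1} written out via the coend description.

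For $i_0$, I will substitute $\sD = i_0 X$ and $\sE = i_0 Y$. Since $(i_0 X)(\bj) = \emptyset$ for $j > 0$ and $\otimes$ preserves the initial object, only the summand with $\bj = \bk = \mathbf{0}$ survives, leaving $X \otimes Y \otimes \LA(\bn, \mathbf{0})$. Because $\LA(\bn, \mathbf{0})$ is a singleton when $n = 0$ and empty when $n > 0$, this is canonically $i_0(X \otimes Y)(\bn)$.

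For $p_0$, the crucial combinatorial observation is that $\LA(\mathbf{0}, \bm)$ is a singleton for every $\bm$, since the unique based map $\mathbf{0} \rtarr \bm$ is automatically injective. Evaluating the coend at $\bn = \mathbf{0}$ therefore collapses the $\LA(\mathbf{0}, \bj \vee \bk)$ factor and reduces to $\int^{(\bj, \bk)} \sD(\bj) \otimes \sE(\bk)$. Fubini for coends together with the hypothesis that $\otimes$ commutes with colimits in each variable then factor this as $\bigl(\int^{\bj} \sD(\bj)\bigr) \otimes \bigl(\int^{\bk} \sE(\bk)\bigr)$, and the co-Yoneda lemma \autoref{coYon} applied to the singleton-valued functor $\LA(\mathbf{0}, -)$ identifies each factor with $\sD(\mathbf{0})$ and $\sE(\mathbf{0})$ respectively.

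The main ``obstacle'' is really just bookkeeping: checking compatibility of these natural isomorphisms with the associators, unit isomorphisms, symmetry constraints, and base maps on either side. All of this is forced by naturality together with the universal property of Day convolution, so no new argument is needed once the isomorphisms above are in hand. The one point worth being careful about is that the coends in play are indexed over the essentially small category $\LA$ and we are implicitly using that $\otimes$ commutes with them, which is covered by the standing hypothesis on $\sV$.
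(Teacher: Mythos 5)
Your proof is correct in conclusion and differs from the paper's in one interesting way: for $p_0$, the paper defers to the explicit coproduct formula for $\boxtimes$ established in \autoref{boxbox}, which appears in the \emph{next} section (so the paper's proof of \autoref{strongp} quietly carries a forward reference), while you compute directly from the Day coend. Your key observation---that $\LA(\mathbf{0},\bm)$ is a singleton for every $\bm$, because the unique based map out of $\mathbf{0}$ is automatically injective---is exactly right, and it makes the $p_0$ case self-contained inside \autoref{mon1}. The $i_0$ case you treat the same way the paper does.

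One technical slip: after collapsing the singleton hom factor at $\bn = \mathbf{0}$ you invoke ``Fubini plus $\otimes$ commutes with colimits in each variable'' to split the coend into $\bigl(\int^{\bj}\sD(\bj)\bigr)\otimes\bigl(\int^{\bk}\sE(\bk)\bigr)$. The paper's standing hypothesis is only that $\otimes$ commutes with \emph{finite} colimits, and the coend over $\LA\times\LA$ is not finite, so as written this step overdraws the hypotheses. Fortunately you don't need it: since $\mathbf{0}$ is initial in $\LA$, it is terminal in $\LA^{op}$, hence $(\mathbf{0},\mathbf{0})$ is terminal in $(\LA^{op})^2$, and a colimit over a category with terminal object is simply evaluation at that object. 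Applied to the functor $(\bj,\bk)\mapsto \sD(\bj)\otimes\sE(\bk)$ on $(\LA^{op})^2$ this gives $\sD(\mathbf{0})\otimes\sE(\mathbf{0})$ directly, with no commutation of $\otimes$ past a colimit required. (Equivalently, keep the singleton $\LA(\mathbf{0},\bj)\otimes\LA(\mathbf{0},\bk)$ factor visible and apply \autoref{coYon} in each variable; that also avoids the unbounded-colimit issue, and is closer to what you were gesturing at when you mentioned co-Yoneda, though the way you phrased it---first discarding the hom factor, then applying co-Yoneda, which needs it---is a bit self-contradictory.) With that repair, your argument is sound and, arguably, cleaner than the paper's because it has no forward dependence on \autoref{boxbox}.
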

 \begin{proof}  For $i_0$, this  follows from the universal property and the identification
 \[  (i_0 X\overline{\otimes} i_0Y)(\bj,\bk) = \left \{\begin{array}{ll} 
X\otimes Y & \mbox{if $j=0$ and $k=0$} \\
 \emptyset & \mbox{if $j>0$ or $k>0$.}  
 \end{array} \right.  \]
For $p_0$,  this is clear from the concrete description of $\boxtimes$ in \autoref{boxbox} below.
\end{proof}

We view $ \LA^{op}[\sV]_{\mI_0}$ as a convenient common home for $\LA^{op}_{\mI}[\sV]$ and $\sV_{\mI}$. 
Observe that we can perfectly well apply \autoref{Xstar} with $\sV$ replaced by $\LA^{op}[\sV]$.  It then reads as follows.

\begin{lem}\label{Estar}  If $\sE$ is in $\LA^{op}[\sV]_{\mI_0}$, then the Day tensor powers  $\Ek$ give the values on objects of a covariant functor 
$\sE^{\boxtimes *}\colon \LA\rtarr \LA^{op}[\sV]_{\mI_0}$.  Here we interpret $\sE^{\boxtimes 0}$ to be $\mI_0$.  For $k>0$,  $\Ek$ is the left Kan extension
\begin{equation}\label{Kan2}
\xymatrix{  (\LA^{op})^{k} \ar[rr]^-{\sE\overline{\otimes}\cdots \overline{\otimes}\sE} \ar[d]_{\bigvee}& & \sV.\\
\LA^{op} \ar@{-->}[urr]_{\Ek} & & \\}
\end{equation}
\end{lem}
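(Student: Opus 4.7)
The plan is to invoke \autoref{Xstar} essentially verbatim, but with the ambient symmetric monoidal category $(\sV,\otimes,\mI)$ replaced by $(\LA^{op}[\sV]_{\mI_0}, \boxtimes, \mI_0)$, whose monoidal structure is furnished by \autoref{subbox}. The object $\sE$ in $\LA^{op}[\sV]_{\mI_0}$ comes equipped with a base map $\et\colon \mI_0\rtarr \sE$ by definition, so it qualifies as a based object in that category. \autoref{Xstar} therefore produces a covariant functor $\sE^{\boxtimes *}\colon \LA\rtarr \LA^{op}[\sV]_{\mI_0}$ whose value on $\bk$ is $\Ek$, whose value on $\mathbf{0}$ is $\mI_0$, whose symmetric group actions are by permutation of tensor factors, and whose structure maps $\si_i$ are built by smashing in the base map $\mI_0\rtarr \sE$ in the appropriate slot.

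For the left Kan extension description of $\Ek$ when $k>0$, I would argue as follows. Day convolution $\sD\boxtimes \sE$ is \emph{defined} as the left Kan extension along $\vee\colon \LA^{op}\times\LA^{op}\rtarr \LA^{op}$ of the external tensor product $\sD\overline{\otimes} \sE$, as recorded in diagram (\ref{Kan1}). Iterating, and using that left Kan extensions compose (i.e., $\mathbf{Lan}_{\om'}\mathbf{Lan}_\om F \iso \mathbf{Lan}_{\om'\om}F$) together with the associativity of $\vee$, one obtains by induction on $k$ that the $k$-fold Day convolution $\Ek = \sE\boxtimes \cdots \boxtimes \sE$ is canonically isomorphic to the left Kan extension of the $k$-fold external tensor product $\sE\overline{\otimes}\cdots\overline{\otimes}\sE\colon (\LA^{op})^k\rtarr \sV$ along the iterated wedge $\bigvee\colon (\LA^{op})^k\rtarr \LA^{op}$. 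This is precisely the diagram (\ref{Kan2}).

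The verification of functoriality in the source variable then amounts to checking that the resulting $\Ek$ assemble into a functor out of $\LA$ whose morphism action agrees with that produced by \autoref{Xstar}. This in turn is immediate: permutations act by permuting the tensor factors (equivalently, by permuting the $\vee$-summands at the source of the Kan extension), while the ordered injections $\si_i\colon \mathbf{k-1}\rtarr \bk$ act by inserting the unit via $\et\colon \mI_0\rtarr \sE$ in the $i$th slot, which is exactly the natural transformation between iterated external tensor products induced by $\et$.

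I do not expect any real obstacle; the only subtlety worth flagging is the compatibility of the two descriptions (Day tensor powers versus the explicit Kan extension over the $k$-fold wedge), which rests on the compositionality of left Kan extensions and the associativity of $\vee$ up to coherent isomorphism. Both facts are standard from the setup in \autoref{prel}, so the proof reduces to recalling these generalities and pointing to \autoref{Xstar} and \autoref{subbox}.
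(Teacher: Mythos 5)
Your proposal is correct and follows the paper's own route: the paper's entire proof is the sentence preceding the lemma, ``we can perfectly well apply \autoref{Xstar} with $\sV$ replaced by $\LA^{op}[\sV]$,'' and you do exactly that (the cosmetic difference of substituting $\LA^{op}[\sV]_{\mI_0}$ rather than $\LA^{op}[\sV]$ with unit $\mI_0$ is immaterial, since the based objects of the latter are precisely the objects of the former). Your additional remarks about compositionality of left Kan extensions along $\vee$ and the explicit description of the $\si_i$ and $\SI_k$-actions are correct elaborations of what the paper leaves implicit.
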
  

\section{Combinatorics of Day convolution}\label{comb}

Our hypotheses on $\sV$ ensure that the functor $\otimes$ takes coproducts to coproducts in either variable, as holds automatically when $\sV$ is closed. 

Let $\sD$ and $\sE$ be objects of $\LA^{op}[\sV]_{\mI_0}$.  The Kan extensions \autoref{Kan1} and  \autoref{Kan2} are  coends
\begin{equation}\label{coend1}    
(\sD\boxtimes \sE)(\bn)  = \int^{\bj_1,\bj_2}\big( \sD(\bj_1) \otimes \sE(\bj_2) \big) \otimes  
\Lambda(\bn, \mathbf{j_1 +  j_2}),
\end{equation}
\begin{equation}\label{coend2}    
\Ek(\bn)  = \int^{\bj_1, \ldots, \bj_k}\big( \sE(\bj_1) \otimes \ldots \otimes \sE(\bj_k) \big) \otimes  \Lambda(\bn, \mathbf{j_1 + \ldots + j_k}). 
\end{equation}

\begin{rem}\label{combtoo}
The contravariant functoriality of  $\sE^{\boxtimes k}(\bn)$ on $n$ is given by precomposition in 
$\LA(-,\bj_1+ \cdots + \bj_k)$.  As $k$ varies, the covariant functoriality on $k$ is best understood by  treating permutations and proper injections separately.  Fix $(j_1, \cdots, j_k)$ with $n= j_1+\cdots + j_k$.  For  $\si\in \SI_k$, $\si$ acts diagonally as the tensor of 
$$\si\colon  \sE(\bj_1) \otimes \ldots \otimes \sE(\bj_k) \rtarr  \sE(\bj_{\si^{-1}(1)}) \otimes \ldots \otimes \sE(\bj_{\si^{-1}(k)})$$ 
with the map $\si\colon \SI_n\rtarr \SI_n$, given by postcomposition with the block permutation  of $n$  letters obtained via the lexicographic identification of $\bn$ with the two $k$-fold wedge products in sight:
$$ \si(j_1,\dots, j_k)\colon \bn\iso \bj_1 \vee \cdots \vee \bj_k \rtarr \bj_{\si^{-1}(1)} \vee \cdots \vee \bj_{\si^{-1}(k)}\iso \bn. $$
Similarly, for an injection $\si_i\colon \bk \rtarr \mathbf{k+1}$, $\si_i$ acts diagonally as the tensor of
$$ \si_i \colon   \sE(\bj_1) \otimes \ldots \otimes \sE(\bj_k) \rtarr  
\sE(\bj_1) \otimes \cdots \otimes \sE(\mathbf{0})  \otimes \cdots \otimes \sE(\bj_k) $$
obtained by inserting the unit $\mI \rtarr \sE(\mathbf{0})$ in the $i$th position and the identity map 
$$ \bj_1 \vee \cdots \vee\bj_k  \rtarr   \bj_1 \vee \cdots \vee \mathbf{0} \vee \cdots \vee \bj_k. $$
That is, we are changing from $(j_1, \cdots, j_k)$ to $(j_1, \cdots, 0, \cdots, j_k)$ by inserting $0$  in the $i$th slot.  Observe that for each fixed $n$,  the $\Ek(\bn)$ as $k$ varies do indeed give a covariant functor $\LA\rtarr \sV$. 
\end{rem}

We have analogous definitions  with $\LA$ replaced by $\SI$.  We write $\boxtimes_{\SI}$ and $\boxtimes_{\LA}$ for the respective Day tensor products in this section.  By the universal property of $\boxtimes_{\SI}$, we have a natural map of $\SI$-sequences
$\io\colon  \sD\boxtimes_{\SI} \sE \rtarr  \sD\boxtimes_{\LA} \sE$.

The following combinatorial result looks quite surprising at first sight, but it is proven by using a specialization of a standard categorical result on the comparison of colimits \cite[Lemma 8.3.4]{Riehl}.    Our original direct proof is more illuminating, but is lengthier and is  therefore just sketched. The impatient reader is urged to accept the result and skip the details of proof.
\begin{thm}\label{boxbox} For  $\sD$ and $\sE$ in  $\SI^{op}[\sV]$, 
\[  (\sD\boxtimes_{\SI} \sE)(\bn) \iso \coprod_{j+k = n} \sD(\bj) \otimes \sE(\bk) \otimes_{\SI_j\times \SI_k} \SI_n.  \]
For $\sD$ and $\sE$ in $\LA^{op}[\sV]$, the natural map
\[  \io \colon  \sD\boxtimes_{\SI} \sE \rtarr  \sD\boxtimes_{\LA} \sE \] 
is an isomorphism of $\SI$-sequences. 
\end{thm}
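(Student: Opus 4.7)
I would address the two statements in turn: the first is a direct coend calculation, and the second reduces to a canonical-form argument.

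For the first statement, substitute $\SI$ for $\LA$ in \autoref{coend1}. Since $\SI(\bn,\mathbf{j_1+j_2})$ vanishes unless $j_1+j_2=n$ and otherwise equals $\SI_n$ with its natural $(\SI_{j_1}\times\SI_{j_2})$-left and $\SI_n$-right actions, the integrand contributes only on the finite diagonal $j+k=n$. The residual coend identifications are generated by pairs of permutations in $\SI_j\times\SI_k$, which are precisely the relations defining the tensor $\sD(\bj)\otimes\sE(\bk)\otimes_{\SI_j\times\SI_k}\SI_n$.

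For the second statement, I would build an inverse $\rho$ to $\iota$ by reducing each $\LA$-coend representative to a canonical form in which $f$ is a bijection. Given $(d,e,f)\in\sD(\bj_1)\otimes\sE(\bj_2)\otimes\LA(\bn,\mathbf{j_1+j_2})$, let $A = f^{-1}\{1,\ldots,j_1\}$ and $B = f^{-1}\{j_1+1,\ldots,j_1+j_2\}$, with $|A|=p$, $|B|=q$, so $p+q=n$. Let $g_1\colon\bp\rtarr\bj_1$ and $g_2\colon\bq\rtarr\bj_2$ be the unique ordered based injections whose images are $f(A)$ and (shifted) $f(B)$, so that $f = (g_1\vee g_2)\circ\bar f$ for a unique bijection $\bar f\colon\bn\rtarr\mathbf{p+q}$. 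Set $\rho[(d,e,f)] := [(\sD(g_1)d,\sE(g_2)e,\bar f)]$, a class in $(\sD\boxtimes_\SI\sE)(\bn)$. The $\LA$-coend relation induced by the pair $(g_1,g_2)$ shows $\iota\rho = \id$, and when $f$ is itself a bijection the reduction is trivial, so $\rho\iota = \id$ on representatives.

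The main obstacle is verifying that $\rho$ descends to $\LA$-coend classes. It suffices to check a single generating relation $(\sD(h_1)d',\sE(h_2)e',f)\sim(d',e',(h_1\vee h_2)\circ f)$ with $h_i\colon\bj_i\rtarr\bj_i'$ in $\LA$. The combinatorial crux is that the partitions $A,B$ of $\{1,\ldots,n\}$ are invariant under postcomposition of $f$ with $h_1\vee h_2$, so both reductions land in the same summand $\sD(\bp)\otimes\sE(\bq)\otimes\SI_n$; moreover the ordered injections $g_i,g_i'$ appearing in the two reductions are related by $h_i\circ g_i = g_i'\circ\sigma_i$ for unique permutations $\sigma_1\in\SI_p$ and $\sigma_2\in\SI_q$ that record the non-order-preserving part of $h_i$. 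A short calculation then shows the two reductions differ by exactly the $\SI$-coend relation generated by $(\sigma_1,\sigma_2)$, and chains of generating relations are handled by iteration. Alternatively, the entire verification can be packaged as a cofinality comparison of the left Kan extensions along the wedge functors $\SI\times\SI\rtarr\SI$ and $\LA\times\LA\rtarr\LA$, via \cite[Lemma 8.3.4]{Riehl}.
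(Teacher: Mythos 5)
Your proposal is correct and follows essentially the same route as the paper: the first isomorphism is obtained by a direct slice-category/coend calculation over $\vee_{\SI}/\bn$, and the inverse to $\io$ is built from exactly the decomposition $f=(g_1\vee g_2)\circ\bar f$ you describe (the paper writes this as $\si=(\si_1\vee\si_2)\circ\al^{-1}$ with $\al$ a $(j,k)$-shuffle, choosing the shuffle rather than order-preserving $g_i$ as the normalization, which is an inessential difference). The only difference is emphasis: the paper proves the second statement by showing $i^{op}/\bn$ is a final functor via \cite[Lemma 8.3.4]{Riehl} and then unravels that argument into the explicit map $\tilde\rho$, whereas you lead with the explicit construction of $\rho$ and its descent to coend classes, citing the cofinality comparison as the alternative packaging.
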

\begin{proof}
Recall \autoref{pedant} and remember that $\SI$ and $\LA$ have the same objects,  that $\SI_n = \SI(\bn,\bn) = \LA(\bn,\bn)$, and that $\LA(\bn,\bp)$ is empty unless $n\leq p$.

For the first statement, \cite[Remark 1.2.6]{Riehl}  shows that $\sD \boxtimes_{\SI} \sE$ can be computed pointwise as
\begin{equation}\label{boxtimesSI2} 
(\sD \boxtimes_{\SI} \sE)(\bn) =  \xymatrix@1{\mathrm{colim}\big{(} \vee_{\SI}/{\bn} \ar[r]^-{U} & \SI^{op} \times \SI^{op} \ar[r]^-{\sD\overline{\otimes}\sE} & \sV \big{)}.\\}
\end{equation}
Here $\vee_{\SI}$ denotes the functor $\vee\colon \SI^{op} \times \SI^{op} \rtarr \SI^{op}$,  $\vee_{\SI}/{\bn}$ denotes its slice category of objects over $\bn$, and
$U$ is the evident forgetful functor.  Writing in terms of $\SI$ rather than $\SI^{op}$, 
recall that the objects of  $\vee_{\SI}/{\bn}$ are triples $\{\bj,\bk;\ta\}$ where $j+k = n$ and
$\ta\colon \bn\rtarr \mathbf{j+k} = \bn$ is in $\SI_n$.
The morphisms of $\vee_{\SI}/{\bn}$  are of the form
$$(\mu,\nu) \colon \{\bj,\bk;\ta\} \rtarr \{\bj,\bk;\ta'\}$$
where $(\mu,\nu)\in \SI_j\times \SI_k$ and $(\mu\wed \nu)\com \ta = {\ta}'$.  More explicitly, we have the isomorphism
\[ (\sD\boxtimes_{\SI} \sE)(\bn) = \coprod_{j+k = n} \sD(\bj) \otimes \sE(\bk) \otimes_{\SI_j\times \SI_k} \SI_n. \]
Here, with $j+k=n$,  $\SI_{j}\times \SI_{k}$ is embedded as a subgroup of $\SI_{n}$ via block sum of permutations.  Letting $\SI_{j,k}\subset \SI_{j+k}$ be the subset of $(j,k)$-shuffles, we observe that these give canonical orbit representatives, so that, as objects of $\sV$,
\begin{equation}\label{NoOrbits}
  (\sD\boxtimes_{\SI} \sE)(\bn) \iso \coprod_{j+k = n} \sD(\bj) \otimes \sE(\bk) \otimes \SI_{j,k}.
\end{equation}

For the second statement, \cite[Remark 1.2.6]{Riehl}  shows that $\sD \boxtimes_{\Lambda} \sE$ can be computed pointwise as
\begin{equation}\label{boxtimesLA2} 
(\sD \boxtimes_{\LA} \sE)(\bn) =  \xymatrix@1{\mathrm{colim}\big{(} \vee_{\LA}/{\bn} \ar[r]^-{U} & \LA^{op} \times \LA^{op} \ar[r]^-{\sD\overline{\otimes}\sE} & \sV \big{)}.\\}
\end{equation}
Writing in terms of $\LA$ rather than $\LA^{op}$,  the slice category $\vee_{\LA}/\bn$ has objects $\{\bp,\bq,\si\}$ where $p+q\geq n$ and $\si$ is an injection $\bn\rtarr  \mathbf{p+q}$.  Its
morphisms are of the form
$$(\mu,\nu) \colon \{\bp,\bq;\si\} \rtarr \{\bp',\bq';\si'\}$$
where $\mu\colon \bp\rtarr \bp'$ and $\nu\colon \bq\rtarr \bq'$ are injections such that  $(\mu\wed \nu)\com \si  = \si'$.

Consider the following diagram.
\begin{equation*}
  \xymatrix{
    \SI^{op} \times \SI^{op}  \ar[r]^{i^{op} \times i^{op}} \ar[dd]_{\bigvee_{\SI}} &
    \LA^{op} \times \LA^{op}  \ar[rr]^-{\sD\overline{\otimes}\sE}     \ar[d]_{\bigvee_{\LA}} & & \sV.\\
 & \LA^{op} \ar@{-->}[urr]_-{\sD\boxtimes_{\LA}\sE} &  \dltwocell<\omit>{<0>   \io}& \\
     \SI^{op} \ar[ur]^{i^{op}} \ar@{-->}@/_4pc/[uurrr]_-{\sD\boxtimes_{\SI}\sE}  & & \\}
\end{equation*}
The map $\io$  is induced pointwise by the map of slice categories
$$ i^{op}/\bn \colon  \wed_{\SI}/\bn \rtarr \wed_{\LA}/\bn$$
induced by the trapezoid on the left.

It suffices to show that each $i^{op}/\bn$ is final, in the sense of \cite[Definition 8.3.2]{Riehl}. By \cite[Lemma 8.3.4]{Riehl}, this is true if and only if the
slice category $d/(i^{op}/\bn)$ is non-empty and connected for each object $d = \{\bp,\bq,\si\} \in \wed_{\LA}/\bn$.  Again writing in terms
of $\SI$ and $\LA$ rather than their opposites, assume that $\si$ maps $j$ elements of  $\{1,\cdots,n\}$ to $\{1,\cdots,p\}$ and the remaining $k = n-j$ elements 
to $\{p+1,\cdots,p+q\}$.  Let
$$ \si(\{i_1, \cdots, i_j\}) \subset \{1, \cdots, p\} \ \ \text{and} \ \ \si(\{i_{j+1}\cdots, i_n\})\subset \{p+1,\cdots, p+q\},$$
where we label the $i$'s in increasing order.  Define $\si_1\colon \bj\rtarr \bp$ and $\si_2\colon \bk \rtarr \bq$ by
$$  \si_1(a) = \si(i_a)  \ \ \text{for} \ \ 1\leq a\leq j \ \ \text{and} \ \ \si_2(b) =
\si(i_{j+b})-p \ \ \text{for} \ \  1\leq b\leq k. $$
Let $\alpha$ be the  $(j,k)$-shuffle specified by $\al(a) = i_a  \ \ \text{for} \ \ 1\leq a\leq j+k. $
Then $n=j+k$ and $\si$ is the composite
\[ \xymatrix@1{ \bn \ar[r]^-{\alpha^{-1}} & \mathbf{j+k}  \ar[r]^-{\si_1\vee \si_2} &  \mathbf{p+q} \\} \]
In particular, if $\bp = \bj$ and $\bq = \bk$, we can decompose $\tau : \bn \rtarr \bn$ as $(\tau_1 \vee
\tau_2) \com \beta^{-1}$, where  $\tau_1: \bj \rtarr \bj$, $\tau_2: \bk \rtarr \bk$, and $\beta: \bn \rtarr \mathbf{j+k}$
is a  $(j,k)$-shuffle .

Then an object of $d/(i^{op}/\bn)$ is an object $\{\bj,\bk,\ta\}$ of  $\wed_{\SI}/\bn$ together with
injections $\mu\colon \bj\rtarr \bp$ and $\nu\colon \bk\rtarr \bq$ such that $(\mu \vee \nu)
  \com \ta = \si$. This implies that
$$\alpha = \beta, \ \ \mu\com \ta_1 = \si_1  \ \ \text{and} \ \ \nu\com \ta_2 = \si_2. $$
Since $\si$ determines $j$ and $k$, $\ta$ prescribes this object.  If $\ta'$
prescribes another such object, then there is a unique isomorphism  $\{\bj,\bk,\ta\}\rtarr  \{\bj,\bk,\ta'\}$ in $\wed_{\SI}/\bn$, namely 
$(\tau_1'\tau_1^{-1}, \tau_2'\tau_2^{-1}) \in \Sigma_j \times \Sigma_k$.  Thus $d/(i^{op}/\bn)$ is non-empty and connected.

It is illuminating to unravel this categorical proof. Let $\sD$ and $\sE$ be in $\LA^{op}[\sV]$.  For each $\bn$, we have the commutative square
\begin{equation}\label{wow}
 \xymatrix{  
\coprod_{j+k = n} \sD(\bj) \otimes \sE(\bk) \otimes \LA(\bn,\bn) \ar[r]^-{\pi} \ar[d]_{\subset} & (\sD\boxtimes_{\SI} \sE)(\bn)  \ar[d]^\iota  \\
\coprod_{p+q\geq n} \sD(\bp)\otimes \sE(\bq) \otimes \LA(\bn, \mathbf{p+q}) \ar[r]_-{\rh}
\ar@{-->}[ur]^{\tilde{\rho}} & (\sD\boxtimes_{\LA} \sE)(\bn)\\} 
\end{equation}
Here the left arrow is the evident inclusion of components and $\pi$ and $\rh$ are the canonical maps given by the definitions of our tensor products of functors. 
The essential point of the proof is to construct a map $\tilde{\rho}$ that makes the diagram commute and factors through $\rh$ to induce an inverse to $\io$.
To construct $\tilde{\rho}$ explicitly, observe that 
$\sD(\bp)\otimes \sE(\bq) \otimes \LA(\bn, \mathbf{p+q})$
is the coproduct over the injections
$\si \colon \bn \rtarr \mathbf{p+q}$ of its subobjects 
$\sD(\bp)\otimes \sE(\bq) \otimes \{\si\}$.
It suffices to define $\tilde{\rho}$ on each such component.
As explained above, we may factor $\sigma$ as $(\sigma_1 \vee \sigma_2)\com \al^{-1}$.
By definition, $\rh$ coequalizes the pair of maps 
\begin{equation}\label{ein}
\sD(\bp) \otimes \sE(\bq)\otimes \LA(\bj,\bp)\otimes \LA(\bk,\bq)\otimes  \LA(\bn,\bn) \rtarr\sD(\bj)\otimes \sE(\bk) \otimes\LA(\bn,\bn)
\end{equation}
and
\begin{equation}\label{zwei}
\sD(\bp) \otimes \sE(\bq)\otimes \LA(\bj,\bp)\otimes \LA(\bk,\bq)\otimes\LA(\bn,\bn)  \rtarr \sD(\bp)\otimes \sE(\bq) \otimes \LA(\bn, \mathbf{p+q}). 
\end{equation}
Here \autoref{ein} is the tensor of the evaluation map

{\small{\[ \xymatrix@1{
\sD(\bp) \otimes \sE(\bq)\otimes \LA(\bj,\bp)\otimes \LA(\bk,\bq) \iso \sD(\bp) \otimes  \LA(\bj,\bp)\otimes \sE(\bq) \otimes \LA(\bk,\bq)
\ar[r] & \sD(\bj)\otimes \sE(\bk)\\} \]
}}

\noindent
of the contravariant functor $\sD\otimes \sE$ and the identity map of $\LA(\bn,\bn)$ and \autoref{zwei} is the tensor of the identity map of
$\sD(\bp) \otimes \sE(\bq)$ with the map
\[ \xymatrix@1{ \LA(\bj,\bp)\otimes \LA(\bk,\bq)\otimes \LA(\bn,\bn) \ar[r]^-{\vee\otimes \id} & \LA(\bn,\mathbf{p+q})\otimes \LA(\bn,\bn)  \ar[r]^-{\com} 
& \LA(\bn,\mathbf{p+q}). \\} \]
Restrict the domains of  \autoref{ein} and \autoref{zwei} to the component
  \[ \sD(\bp) \otimes \sE(\bq)\otimes \{\si_1\} \otimes \{\si_2\} \otimes \{\al^{-1}\}.\]
Then \autoref{ein} gives an isomorphism
$$f_{\si}:\sD(\bp) \otimes \sE(\bq)\otimes \{\si_1\} \otimes \{\si_2\} \otimes \{\al^{-1}\} \to \sD(\bj)\otimes \sE(\bk)\otimes \{\al^{-1}\},$$
while \autoref{zwei} gives an isomorphism
$$g_{\si}: \sD(\bp) \otimes \sE(\bq)\otimes \{\si_1\} \otimes \{\si_2\} \otimes \{\al^{-1}\} \to
\sD(\bp)\otimes \sE(\bq) \otimes \{\si\}.$$
The composite
\[ \pi\com f_{\si}\com g_{\si}^{-1}\colon \sD(\bp)\otimes \sE(\bq) \otimes \{\si\} \rtarr (\sD\boxtimes_{\SI} \sE)(\bn) \]
gives the required component of $\tilde{\rho}$.  Letting $p$, $q$, and $\si\colon \bp\rtarr\bq$ vary, we obtain $\tilde{\rho}$ as the sum of these maps.  
In a more general but notationally simpler context, the categorical  proof of \cite[Lemma  8.34]{Riehl} essentially shows that $\tilde{\rho}$ factors through $\rh$ to give the
inverse $\io^{-1}$, as  we originally showed directly.
\end{proof}

\begin{rem}  Via the isomorphism $\io$, it follows that for $\LA$-sequences $\sD$ and $\sE$, the $\SI$-sequence $\sD\boxtimes_{\SI} \sE$  extends to a functor 
$\LA^{op}\rtarr \sV$.   In effect, the proof is constructing that extension, and then the intuition is that \autoref{coYon} kicks in to  give the isomorphism
\[  \sD\boxtimes_{\LA} \sE \iso  \int^{r\in \LA}  (\sD\boxtimes_{\SI} \sE)(r) \otimes \LA(-,r). \]
\end{rem}

\begin{rem} Formally, letting $i\colon \SI\rtarr \LA$ be the inclusion, we have the following commutative diagram, where $i^*$ is the evident forgetful functor.
\[
\xymatrix{
\sV_{\mI} \ar[r]^{u} \ar[d]_{i_0} & \sV  \ar@{=}[r]  \ar[d]^{i_0} & \sV \ar[d]^{i_0} \\
\LA^{op}[\sV]_{\mI_0} \ar[r]_-{u} & \LA^{op}[\sV] \ar[r]_{i^*} & \SI^{op}[\sV]\\}  \]
The functors $u$ forget the base maps, and they have left adjoints $(-)_+$ obtained by adjoining base objects.  Thus $X_+= X\amalg \mI$ 
for $X\in \sV$ and $\sD_+ = \sD\amalg \mI_0$ for $\sD\in \LA^{op}[\sV]$.  The functor $i^*$ has left adjoint $i_*$ given by left Kan extension along $i$. An easy formal argument shows that $i_*$ is strong symmetric monoidal, so that
\[ i_*\sD\boxtimes_{\LA} i_*\sE \iso i_*(\sD\boxtimes_{\SI} \sE) \]
for  $\sD,\sE\in \SI^{op}[\sV]$. It follows that $i^*$ is lax symmetric monoidal via the composite
$$\io \colon i^*\sD\boxtimes_{\SI}i^*\sE \rtarr  i^*i_*(i^*\sD\boxtimes_{\SI} i^*\sE) \iso i^*(i_*i^*\sD\boxtimes i_*i^*\sE) \rtarr i^*(\sD\boxtimes_{\LA}\sE)
$$
for $\sD,\sE\in \LA^{op}[\sV]$, where the unlabelled arrows are given by the unit and counit of the adjunction.  Deleting $i^*$ from the notation, inspection of definitions shows that this is the map $\io$ of \autoref{boxbox}.  The fact that it is an isomorphism should be viewed as an analog of the triviality that the identity map $\sV=\sV$ is strong symmetric monoidal.  Together with the observation that the adjunction $((-)_+,u)$ for the top row maps to the adjunction $((-)_+\com i_*,i^*\com u)$  for the bottom row, this gives substance to our analogy between the comparison of unbased and based spaces and the comparison of $\SI$-sequences and $\LA$-sequences.
\end{rem}

From \autoref{boxbox}, we obtain the following analog inductively.

\begin{thm}\label{loxlox}  For $\sE$ in  $\SI^{op}[\sV]$,
\[  \sE^{\boxtimes_{\SI} k}(\bn)  \iso \coprod_{j_1+ \cdots + j_k = n}  
\big( \sE(\bj_1) \otimes \ldots \otimes \sE(\bj_k) \big)  \otimes_{\SI_{j_1}\times \cdots \times \SI_{j_k}} \SI_n.  \]
For $\sE$ in $\LA^{op}[\sV]_{\mI_0}$, the natural map
\[  \io\colon  \sE^{\boxtimes_{\SI}k}(\bn) \rtarr  \sE^{\boxtimes_{\LA}k}(\bn)  \]
is an isomorphism for each $n$.
\end{thm}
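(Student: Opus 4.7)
The plan is to proceed by induction on $k$, with base case $k=1$ trivial since $\sE^{\boxtimes 1} = \sE$ under either convolution. For the inductive step, use the associativity of Day convolution to write $\sE^{\boxtimes k} \iso \sE^{\boxtimes (k-1)} \boxtimes \sE$ and then reduce to the two-variable result \autoref{boxbox} together with the inductive hypothesis.

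For the first statement, \autoref{boxbox} gives
\[ \sE^{\boxtimes_{\SI} k}(\bn) \iso \coprod_{m+j=n} \sE^{\boxtimes_{\SI}(k-1)}(\bm) \otimes \sE(\bj) \otimes_{\SI_m \times \SI_j} \SI_n. \]
Substituting the inductive formula for $\sE^{\boxtimes_{\SI}(k-1)}(\bm)$ and applying the transitivity of induction identity $(A \otimes_H K) \otimes_K G \iso A \otimes_H G$, valid for nested subgroups $H \subseteq K \subseteq G$, collapses the two coproducts into the claimed single coproduct over $j_1+\cdots+j_k = n$. Here one takes $H = \SI_{j_1}\times\cdots\times\SI_{j_{k-1}}$ embedded via block sum in $K = \SI_{j_1+\cdots+j_{k-1}} = \SI_m$, and extends to $K \times \SI_j \subseteq \SI_n$, noting that the extra tensor factor $\sE(\bj)$ on the right carries only a $\SI_j$-action and so passes freely through the inner tensor product.

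For the second statement, the inductive hypothesis supplies a natural isomorphism $\sE^{\boxtimes_{\SI}(k-1)} \iso \sE^{\boxtimes_{\LA}(k-1)}$ of underlying $\SI$-sequences, whose target lies in $\LA^{op}[\sV]_{\mI_0}$ by \autoref{subbox}. Apply \autoref{boxbox} to the pair $(\sE^{\boxtimes_{\LA}(k-1)}, \sE)$ to get $\sE^{\boxtimes_{\LA}(k-1)} \boxtimes_{\SI} \sE \iso \sE^{\boxtimes_{\LA}(k-1)} \boxtimes_{\LA} \sE = \sE^{\boxtimes_{\LA} k}$, and compose with the induction hypothesis and the associativity of $\boxtimes_{\SI}$ to produce the desired $\io$.

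The main obstacle will be verifying that the isomorphism assembled inductively actually coincides with the canonical map $\io$ built uniformly from the universal properties of the two Day convolutions, rather than merely being \emph{some} isomorphism between the two objects. An alternative path, hinted at in the theorem statement, is to generalize the direct combinatorial argument of \autoref{boxbox} to $k$ terms: for the first statement one replaces the slice $\vee_{\SI}/\bn$ of pairs by its $k$-fold analog with objects $\{\bj_1,\ldots,\bj_k;\ta\}$ where $\sum j_l = n$ and $\ta \in \SI_n$; for the second statement, one shows that the comparison functor $i^{op}/\bn$ from the $k$-fold $\SI$-slice to the $k$-fold $\LA$-slice is final, using the decomposition of an injection $\si\colon \bn\rtarr \mathbf{p_1+\cdots+p_k}$ as $(\si_1\vee\cdots\vee\si_k)\com\al^{-1}$ for an appropriate shuffle $\al$ and summands $\si_l\colon \bj_l\rtarr \bp_l$, exactly as in the $k=2$ case, to establish non-emptiness and connectedness of the relevant slice categories.
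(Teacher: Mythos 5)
Your proposal is correct and follows the approach the paper itself gestures at: the paper's ``proof'' of \autoref{loxlox} is the single sentence ``Arguing in the same way as in \autoref{boxbox} but with more terms or arguing inductively, we obtain the following analog starting from \autoref{coend2},'' so both the $k$-fold direct argument you sketch at the end and the induction you develop in the body are exactly the routes the authors have in mind. Your inductive step is sound: the transitivity-of-induction identity $(A \otimes_{H} K) \otimes_{K} G \iso A \otimes_{H} G$ with $H = \SI_{j_1}\times\cdots\times\SI_{j_{k-1}} \subseteq K = \SI_m$, extended to $K \times \SI_j \subseteq \SI_n$, does collapse the nested coproducts, and the observation that $\sE(\bj)$ carries only a $\SI_j$-action and so slides past the inner tensor is the right bookkeeping. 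The worry you flag --- whether the composite isomorphism assembled from associativity, the $(k-1)$-fold case, and the two-variable \autoref{boxbox} actually \emph{is} the canonical $\io$ --- is legitimate but routine: it comes down to the fact that the universal cocone into $\sE^{\boxtimes_{\SI}k}$ factors through the universal cocone into $\sE^{\boxtimes_{\SI}(k-1)}\boxtimes_{\SI}\sE$ via the associativity constraint, and likewise over $\LA$, so the two constructions satisfy the same universal property; the paper silently sweeps this under the rug, and you were right to at least name it. Naming the direct $k$-fold finality argument as the cleaner alternative is also apt, since that is the version that most directly tracks the universal $\io$.
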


In effect, when $\sE$ is the restriction of a $\LA$-sequence to $\SI$,  the proof of \autoref{loxlox} shows how to construct the  $\Lambda$-sequence structure of $\sE^{\boxtimes_{\LA}k}(\bn)$ detailed in \autoref{combtoo} from $\sE^{\boxtimes_{\SI}k}(\bn)$.

\section{The Kelly monoidal structures}\label{mon}

Day convolution is a stepping stone towards the product, denoted $\odot$, on
$\LA^{op}[\sV]_{\mI_0}$ that we really care about.  We call it the Kelly product
since it is an extension of the Kelly product (alias composition product) on $\SI^{op}[\sV]$,
which we denote by $\odot_{\Sigma}$.

 As noted in \autoref{prel}, the left Kan extensions of \autoref{Kan1} and \autoref{Kan2} are tensor products of functors.  Specializing \autoref{coendtoo}, we have the tensor product
 \[  \sD\otimes_{\LA} \sE \]
of a contravariant functor $\sD\colon \LA \rtarr \sV$ and a covariant functor $\sE\colon \LA \rtarr \sV$.  It is the coequalizer in $\sV$ displayed in the diagram 
\[ \xymatrix@1{ 
\coprod_{(\bi,\bj)}  \sD(\bj)\otimes \LA(\bi,\bj) \otimes \sE(\bi) \ar@<1ex>[r]^-{{ev} \otimes \id}  \ar@<-1ex>[r]_-{\id\otimes {ev} }
& \coprod_{\bk} \sD(\bk)\otimes \sE(\bk) \ar[r] &   \sD\otimes_{\LA} \sE}, \]
where  we have written $ev$ for the evaluation maps of the functors $\sD$ and $\sE$. Schematically, in terms of elements where that makes sense,
$\la^*(d)\otimes e\sim d\otimes \la_*(e)$ for a map $\la\colon \bf i\rtarr \bf j$ in $\LA$ and elements $d\in \sD(\bj)$ and $e\in \sE(\bi)$. 

Observe that $\otimes_{\LA}$ can be viewed as a quotient of $\otimes_{\SI}$ that is obtained by adding the injections to the permutations. In practice, the quotient is obtained by base object identifications.  Most of the topological literature on operads, starting with \cite{MayGeo}, focuses on this tensor product over $\LA$, but most of the algebraic literature focuses on the analogous tensor product over $\SI$.   As we shall recall, the following special case, first formulated in \cite{CMT}, generalizes the construction of the monad in  $\sV_{\mI}$ associated to a unital operad.  Recall \autoref{Xstar}.

\begin{defn}\label{DXdefn}  For $\sD\in \LA^{op}[\sV]$,  define a functor $\bD\colon \sV_{\mI} \rtarr \sV_{\mI}$ by
\[ \bD X = \sD\otimes_{\LA} X^{\otimes *}. \]
Observe that $\bD\mI = \sD(\mathbf{0})$ since injection identifications glue the  terms $\sD(k) \otimes \mI^{\otimes k}$ 
for $k>0$ to $\sD(\mathbf{0})\otimes \mI = \sD(\mathbf{0})$.
\end{defn}

Thinking of $\bD X$ as a kind of product of $\sD$ and $X$, we obtain the Kelly product as a generalized analog. Recall \autoref{Estar}.

\begin{defn}\label{Kelly0LA}  Define the Kelly product $\odot$ on  $\Lambda^{op}[\sV]_{\mI_0}$ by
\[   \big ( \sD  \odot \sE\big )(\bn) = \sD   \otimes_{\Lambda} \sE^{\boxtimes *}(\bn)\]
for $\LA$-sequences $\sD$ and $\sE$.  Thus its value on $\bn$  is the coend 
\[  \int^{k} \sD(\bk)\otimes \Ek(\bn). \] 
More explicitly,  $\big (\sD  \odot \sE \big )(\bn)$ is the coequalizer 
\[ \xymatrix@1{  \coprod_{(i,j)}  \sD(\bj) \otimes  \LA(\bi,\bj) \otimes \sE^{\boxtimes i}(\bn)
 \ar@<1ex>[r]^-{{ev} \otimes \id}  \ar@<-1ex>[r]_-{\id\otimes {ev} }
 & \coprod_\bk \sD(\bk)\otimes \Ek(\bn)  \ar[r] &   \big( \sD\odot \sE\big)(\bn). \\ } \]
 In particular, $\big( \sD\odot \sE\big)(\mathbf{0}) = \bD\sE(\mathbf{0})$.
The base maps $\et$ of $\sD$ and $\sE$ induce the base map of $\sD  \odot \sE$ as the composite
$$\xymatrix@1{\mI \ar[r]^-{\et} & \sD(\mathbf{0}) = \bD\mI \ar[r]^-{\bD\et} &  \bD\sE(\mathbf{0})  = \big ( \sD  \odot \sE\big )(\mathbf{0})\\}.$$
Observe that if $\sD(\mathbf{0})=\mI$ and $\sE(\mathbf{0})= \mI$, then these are identifications, so that $\odot$ restricts to a product on unital $\LA$-sequences.  

The Kelly product $\odot$ on $\SI^{op}[\sV]$ is defined by replacing $\LA$ by $\SI$ and ignoring base maps in this definition.  We write $\odot_{\LA}$ and $\odot_{\SI}$ when necessary for clarity, but  $\odot = \odot_{\LA}$ is the default.
\end{defn}

Although we are mainly interested in the case when $\sD$ and $\sE$ are unital, we shall make use of the  following comparison between the functor
$i_0$ of \autoref{notn3}, which gives based but not unital choices for $\sE$, and the Kelly product. 

\begin{lem}\label{iodot}
If $\sD\in \LA^{op}[\sV]_{\mI_0}$ and  $X\in \sV_{\mI}$, then $\sD \odot i_0 X$ is isomorphic to $i_0 \bD X$.
\end{lem}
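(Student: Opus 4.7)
The plan is to evaluate $(\sD \odot i_0 X)(\bn)$ directly from \autoref{KellyLA} by first computing the Day tensor powers $(i_0 X)^{\boxtimes k}(\bn)$ and then feeding them into the coend formula defining $\odot$.

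First I would compute $(i_0 X)^{\boxtimes k}(\bn)$ pointwise using \autoref{loxlox}. Since $(i_0 X)(\bj) = \emptyset$ for $j>0$ and $-\otimes Y$ preserves finite colimits (so $\emptyset \otimes Y$ is again $\emptyset$), every summand in the coproduct of \autoref{loxlox} indexed by a decomposition $j_1 + \cdots + j_k = n$ with some $j_i > 0$ vanishes. Hence $(i_0 X)^{\boxtimes k}(\bn) = \emptyset$ when $n>0$, while for $n=0$ the unique decomposition $0 = 0 + \cdots + 0$ yields $(i_0 X)^{\boxtimes k}(\mathbf{0}) \iso X^{\otimes k}$ (using that $\SI_0$ is trivial).

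Next I would verify that as $k$ varies this isomorphism identifies the covariant functor $(i_0 X)^{\boxtimes *}(\mathbf{0})\colon \LA \rtarr \sV$ with the functor $X^{\otimes *}$ from \autoref{Xstar}. Using the explicit description of the $\LA$-functoriality of $\sE^{\boxtimes *}$ given in the paragraph after \autoref{loxlox}, permutations act diagonally by permutation of tensor factors, and an ordered injection $\si_i \colon \bk \rtarr \mathbf{k+1}$ inserts the unit $\mI \rtarr (i_0 X)(\mathbf{0}) = X$ in the $i$th slot. This is exactly the covariant $\LA$-structure defining $X^{\otimes *}$.

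With this identification in hand, \autoref{KellyLA} gives
\[ (\sD \odot i_0 X)(\mathbf{0}) \;=\; \int^{k} \sD(\bk) \otimes X^{\otimes k} \;=\; \sD \otimes_{\LA} X^{\otimes *} \;=\; \bD X, \]
while $(\sD \odot i_0 X)(\bn) = \emptyset$ for $n>0$ because every term in the coend is $\emptyset$. Thus $\sD \odot i_0 X$ matches $i_0 \bD X$ level by level. For the base maps, the composite $\mI \rtarr \sD(\mathbf{0}) = \bD\mI \rtarr \bD X$ (with second arrow $\bD \et$) prescribed for $\sD \odot i_0 X$ in \autoref{KellyLA} is the same composite that gives $\bD X$ its based structure, which is exactly the base map of $i_0 \bD X$ under \autoref{notn3}.

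The main obstacle is the bookkeeping for the covariant $\LA$-functoriality: one must carefully check that the abstract structure maps described after \autoref{loxlox} reduce, once all internal arities are forced to be $\mathbf{0}$, to the concrete unit-insertion maps of \autoref{Xstar}. Once this is in hand, both sides are concentrated in level $\mathbf{0}$ and the desired isomorphism of $\LA$-sequences is immediate from the coend computation above.
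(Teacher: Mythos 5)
Your proof is correct and takes essentially the same approach as the paper: both rely on the observations that $(i_0X)(\bk)=\emptyset$ for $k>0$ forces both sides to be concentrated at level $\mathbf{0}$, and that $(i_0X)^{\boxtimes *}(\mathbf{0})=X^{\otimes *}$, so both reduce to $\bD X$ there. The paper compresses the identification $(i_0X)^{\boxtimes *}(\mathbf{0})=X^{\otimes *}$ into the phrase ``by comparison of definitions,'' whereas you unpack it via \autoref{loxlox} and also check the base maps; those are useful elaborations but not a different route.
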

\begin{proof}
Since $(i_0X)(\bk) = \emptyset$ for $k>0$, $\sD \odot i_0 X$ and $i_0 \bD X$ only take nontrivial values at level $0$, and 
$(i_0X)^{\boxtimes *}(\mathbf{0}) = X^{\otimes *}$ by comparison of definitions.  Therefore
$$(\sD \odot i_0 X)(\mathbf{0}) = \bD X = (i_0 \bD X)(\mathbf{0}).   \qedhere $$  
\end{proof}

In analogy with $\mI_0$, we need the following $\LA$-sequence $\mI_1$.

\begin{notn}\label{notn4} Define a $\LA$-sequence $\mI_1$ by letting $\mI_1(\mathbf{0}) = \mI_1(\mathbf{\mathbf{1}}) = \mI$ and $\mI_1(\bn) = \emptyset$ for $n>1$.  The injection $\mathbf{0} \rtarr \mathbf{1}$  induces the identity map 
 $\mI_1(\mathbf{1})\rtarr \mI_1(\mathbf{0})$.  In analogy with \autoref{notn3},  define $\LA^{op}_{\mI_1}$ to be the category of 
 $\LA$-sequences under $\mI_1$ and define a functor $i_1\colon \sV_{\mI}\rtarr \LA^{op}[\sV]_{\mI_1}$ by letting 
 $i_1 X = X\otimes \mI_1$, 
 where $\otimes$ is defined levelwise.  Thus $i_1X = X$ at levels $0$ and $1$ and $i_1X = \emptyset$ at levels $n >1$.  \end{notn}

 In the rest of this section, we shall prove the following result. 

\begin{thm}\label{boxtimes}  Under the Kelly product $\odot$, the category $\LA^{op}[\sV]_{\mI_0}$ is monoidal with unit object $\mI_1$. 
Therefore the subcategory $\LA^{op}_{\mI}[\LA]$ is also monoidal under $\odot$ with unit object $\mI_1$.  Moreover, $\LA^{op}[\sV]_{\mI_1}$
is also monoidal with unit $\mI_1$ under $\odot$, and the functor 
$$i_1\colon (\sV_{\mI},\otimes, \mI) \rtarr (\LA^{op}[\sV]_{\mI_1},\odot,\mI_1)$$
is strong monoidal.
\end{thm}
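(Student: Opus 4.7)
The plan is to verify the monoidal axioms for $(\LA^{op}[\sV]_{\mI_0},\odot,\mI_1)$ by direct coend manipulation, then deduce the restricted structures and the strong monoidality of $i_1$ as corollaries. I would first check that $\mI_1$ is a two-sided unit, then establish associativity; coherence (pentagon and triangle) will follow by tracing the same identifications.

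The left unit is almost immediate: since $\mI_1(\bk)$ is $\mI$ for $k\in\{0,1\}$ and $\emptyset$ otherwise, the coend $(\mI_1\odot\sE)(\bn)=\int^{\bk}\mI_1(\bk)\otimes\sE^{\boxtimes k}(\bn)$ has only $k=0,1$ summands, and the unique non-identity morphism $\mathbf{0}\to\mathbf{1}$ in $\LA$ identifies the $k=0$ contribution with the image of the base map $\mI\to\sE(\mathbf{0})$ inside the $k=1$ contribution, giving the coequalizer $\sE(\bn)$ at every $\bn$. For the right unit I would first establish $\mI_1^{\boxtimes k}(\bn)\cong\LA(\bn,\bk)$ using \autoref{loxlox}: each tuple $(j_1,\ldots,j_k)\in\{0,1\}^k$ with $\sum j_i=n$ together with an element of $\SI_n$ corresponds to an injection $\bn\to\bk$ via the subset $\{i:j_i=1\}$, and naturality in $\bk$ is checked by matching the $\LA$-functoriality on both sides. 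Co-Yoneda then gives $(\sD\odot\mI_1)(\bn)=\int^{\bk}\sD(\bk)\otimes\LA(\bn,\bk)\cong\sD(\bn)$.

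Associativity reduces, by Fubini for coends, to the identity
\[
(\sE\odot\sF)^{\boxtimes k}(\bn)\;\cong\;\int^{\bm}\sE^{\boxtimes k}(\bm)\otimes\sF^{\boxtimes m}(\bn).
\]
I would prove this by expanding the left side via Day convolution: writing $\sE\odot\sF$ as $\int^{\bp}\sE(\bp)\otimes\sF^{\boxtimes p}$, the $k$th Day power becomes an iterated coend in $(\bm_1,\ldots,\bm_k)$ and $(\bp_1,\ldots,\bp_k)$; the associativity of $\boxtimes$ from \autoref{subbox} folds the iterated $\sF$-powers into $\sF^{\boxtimes(p_1+\cdots+p_k)}(\bn)$, and a final co-Yoneda reassembly identifies the remaining expression with $\int^{\bm}\sE^{\boxtimes k}(\bm)\otimes\sF^{\boxtimes m}(\bn)$. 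Granted this identity, $((\sD\odot\sE)\odot\sF)(\bn)$ and $(\sD\odot(\sE\odot\sF))(\bn)$ become the same triple coend. This is the main obstacle: the interplay of iterated Day convolutions and $\LA$-coends must be handled with care so that the injection identifications (not merely the permutation identifications familiar from the $\SI$-sequence case) are correctly absorbed.

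The remaining statements follow easily. Restriction to $\LA^{op}_\mI[\sV]$ is immediate from the formula $(\sD\odot\sE)(\mathbf{0})=\bD(\sE(\mathbf{0}))$ (unfolding the coend using \autoref{DXdefn}): if $\sD(\mathbf{0})=\sE(\mathbf{0})=\mI$, then $\bD\mI=\sD(\mathbf{0})=\mI$. For $\LA^{op}[\sV]_{\mI_1}$, composition of the structure maps with the unit isomorphism gives $\mI_1\cong\mI_1\odot\mI_1\to\sD\odot\sE$, so $\odot$ restricts. Strong monoidality of $i_1$ is a direct computation: $i_1\mI=\mI\otimes\mI_1=\mI_1$, and since $(i_1 X)(\bk)$ vanishes for $k\geq 2$ and $(i_1 Y)^{\boxtimes k}(\bn)$ vanishes for $n>k$, $(i_1 X\odot i_1 Y)(\bn)$ is supported at $n=0,1$; at both levels the identification along $\mathbf{0}\to\mathbf{1}$ glues the $k=0$ copy of $X$ to the image of $X\otimes\eta_Y$, yielding $X\otimes Y$, which matches $i_1(X\otimes Y)$.
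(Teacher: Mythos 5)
Your proposal is correct and follows essentially the same route as the paper's proof: coend/co-Yoneda computations for the unit laws, and reduction of associativity to the identity $(\sE\odot\sF)^{\boxtimes k}\cong\sE^{\boxtimes k}\odot\sF$, proved by expanding Day convolutions and applying Fubini. The only packaging difference is that the paper factors out this key identity as \autoref{key} (stated in binary form, that $-\odot\sE$ is strong symmetric monoidal for $\boxtimes$, then iterated), whereas you go directly for the $k$-ary version; and for strong monoidality of $i_1$ the paper first proves the slightly more general $i_1X\odot\sE\cong X\otimes\sE$ for arbitrary $\sE$ (again via co-Yoneda) and then specializes, while you compute $i_1X\odot i_1Y$ by hand. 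Both are straightforward variants of the same argument. One small imprecision in your $i_1$ computation: at level $n=1$ there is no $k=0$ contribution to be glued (it is $\emptyset$), only the $k=1$ term $X\otimes Y$ survives; the gluing along $\mathbf{0}\to\mathbf{1}$ is only needed at level $n=0$.
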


\begin{rem}\label{odotunits}  For $\SI$-sequences $\sD$ and $\sE$, the $n$th component 
$\big(\sD  \odot_{\SI} \sE\big)(\bn)$  is 
\begin{align*}  
\big(\sD\otimes_{\SI} \sE^{\boxtimes *}\big)(\bn)  &= 
   \coprod_{k\ge 0} \sD  (\bk) \otimes_{\Sigma_k} \Ek(\bn) \\
     & = \coprod_{k\ge 0} \sD(\bk) \otimes_{\Sigma_k} \left( \coprod_{j_1+\cdots + j_k=n} (\sE (\bj_1) \otimes \ldots \otimes \sE (\bj_k)) \otimes_{\Sigma_{\bj_1} \times \ldots \times \Sigma_{j_k}} \Sigma_n\right) \\
     & =  \coprod_{k\ge 0}\coprod_{j_1+\cdots + j_k=n} \sD  (\bk) \otimes_{\Sigma_k}  \Big( (\sE (\bj_1) \otimes \ldots \otimes \sE (\bj_k)) \otimes_{\Sigma_{j_1} \times \ldots \times \Sigma_{j_k}} \Sigma_n \Big).
\end{align*}
Remember again that there are no maps $\bm \rtarr \bn$ in $\SI$ when $m \neq n$.   In line with our unbased versus based comparison, the unit for the resulting monoidal structure on $\SI^{op}[\sV]$ is not $\mI_1$  but rather its variant with the value $\emptyset$ at $0$. Conceptually,
the respective units are 
$$\mI_1^{\SI} =  \SI^{op}(\mathbf{1},-) \ \ \text{and}\ \   \mI_1^{\LA} =  \LA^{op}(\mathbf{1},-),$$
where we again think of $Set$ as embedded in $\sV$ via the functor $S \mapsto \mI[S]$.  We have an evident comparison map of $\SI$-sequences 
$\nu\colon \mI_1^{\SI} \rtarr \mI_1^{\LA}$, and it is a quick exercise to check that $\mI_1^{\SI} \otimes_{\SI} \LA = \mI_1^{\LA}$.
\end{rem}

In view of \autoref{loxlox},  we can identify $\boxtimes = \boxtimes_{\LA}$ with $\boxtimes_{\SI}$.  Therefore $\odot_{\LA}$ is the composite of the construction of  $\odot_{\SI}$ just described and base object identifications.   Formally, we have the forgetful functor $i^*\colon \LA^{op}[\sV]_{\mI_0} \rtarr \SI^{op}[\sV]$, and we can view $\sD \odot  \sE$ as a quotient of $(i^*D \odot_{\SI}  i^{\boxtimes *}E)$.   Explicitly, the $n^{th}$ component $\big(\sD  \odot_{\LA} \sE\big)(\bn)$ is the quotient of $\big(\sD  \odot_{\SI} \sE\big)(\bn)$ obtained by iterating the following procedure to deal with injections 
$\si_i\colon \mathbf{k} \rtarr \mathbf{k+1}$ for $1\leq i\leq k+1$.  For each sequence $(j_1,\ldots, j_k)$ with $j_1 + \cdots + j_k = n$, we attach
$$  \sD(\mathbf{k+1}) \otimes \sE (\bj_1) \otimes \cdots \otimes \mI \otimes \cdots \otimes \sE (\bj_k) \otimes_{\Sigma_{j_1} \times \cdots \times \SI_0\times \cdots \times \Sigma_{j_k}} \Sigma_n,$$ 
where $\mI$ and $\SI_0 = \ast$  appear in the $i$th place,  to 
$$\sD (\bk)\otimes \sE (\bj_1) \otimes \ldots \otimes \sE (\bj_k) \otimes_{\Sigma_{j_1} \times \ldots \times \Sigma_{j_k}} \Sigma_n $$
via coequalization of the maps
\[ \xymatrix{\sD(\mathbf{k+1})\otimes \sE(\bj_1) \otimes \cdots \otimes \sE(\bj_k)
\otimes_{\Sigma_{j_1} \times \cdots \times \Sigma_{j_k}} \Sigma_n
\ar[d]^{ \id\otimes\si_i } \\
 \sD(\mathbf{k+1})\otimes  \sE(\bj_1) \otimes \cdots \otimes \sE(\mathbf{0}) \otimes \cdots \otimes \sE(\bj_k)\otimes_{\Sigma_{j_1} \times \cdots \times \SI_0\times \cdots \times \Sigma_{j_k}} \Sigma_n  \\} \]
obtained by inserting $\et\colon \mI \rtarr \sE(0)$ in the $i$th position and
\[ \xymatrix{
\sD(\mathbf{k+1}) \otimes \sE(\bj_1) \otimes \cdots \otimes \sE(\bj_k) \otimes_{\Sigma_{j_1} \times \ldots \times \Sigma_{j_k}} \Sigma_n
\ar[d]^{\si_i\otimes \id} \\
 \sD(\bk)\otimes \sE(\bj_1) \otimes \ldots \otimes \sE(\bj_k) \otimes_{\Sigma_{j_1} \times \ldots \times \Sigma_{j_k}} \Sigma_n.  \\}  \] 
 
To prove the associativity of $\odot$, we shall need the following result about the Day product $\boxtimes$.  It is an analog of \cite[Lemma 3.1]{Kelly0}. 
  
 \begin{prop}\label{key}
Let  $\sE\in \LA^{op}[\sV]_{\mI_0}$.  Then the functor
 $$ - \odot \sE:  (\LA^{op}[\sV]_{\mI_0}, \boxtimes, \mI_0) \rtarr (\LA^{op}[\sV]_{\mI_0}, \boxtimes, \mI_0)$$
 is strong symmetric monoidal. Therefore, there is a natural isomorphism 
 $$\sD^{\boxtimes *} \odot \sE \cong (\sD \odot \sE)^{\boxtimes *}$$
 for $\sD\in \LA^{op}[\sV]_{\mI_0}$.
 \end{prop}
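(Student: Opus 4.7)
The plan is to verify the two strong-monoidal coherence data directly from the coend presentations, then derive the displayed isomorphism as a formal consequence. Since $\sD \odot \sE = \sD \otimes_\LA \sE^{\boxtimes *}$ by definition, everything reduces to coend manipulation.

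For the unit, since $\mI_0(\bk) = \emptyset$ for $k > 0$, the coequalizer defining $\mI_0 \otimes_\LA \sE^{\boxtimes *}$ collapses to its $k=0$ summand, giving $\mI \otimes \sE^{\boxtimes 0} = \mI_0$. For the binary compatibility, I expand the left-hand side at $\bn$ as
\[
\int^{\bk} \int^{\bp,\bq} \sD_1(\bp) \otimes \sD_2(\bq) \otimes \LA(\bk, \bp \vee \bq) \otimes \sE^{\boxtimes k}(\bn),
\]
swap the coends by Fubini, and contract the inner one using the co-Yoneda lemma (\autoref{coYon}) applied to the covariant functor $\bk \mapsto \sE^{\boxtimes k}(\bn)$. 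This yields
\[
\int^{\bp,\bq} \sD_1(\bp) \otimes \sD_2(\bq) \otimes \sE^{\boxtimes(p+q)}(\bn).
\]
The crucial input is that $\sE^{\boxtimes *}\colon (\LA,\vee) \rtarr (\LA^{op}[\sV]_{\mI_0}, \boxtimes, \mI_0)$ is strong symmetric monoidal, which is the content of \autoref{Estar} read through the analog of \autoref{pedant} with $\sV$ replaced by $\LA^{op}[\sV]_{\mI_0}$; it provides a coherent natural isomorphism $\sE^{\boxtimes(p+q)} \cong \sE^{\boxtimes p} \boxtimes \sE^{\boxtimes q}$. Substituting and unrolling the Day convolution in the target $\bn$ variable rewrites the display as
\[
\int^{\bp,\bq,\br,\bs} \sD_1(\bp) \otimes \sE^{\boxtimes p}(\br) \otimes \sD_2(\bq) \otimes \sE^{\boxtimes q}(\bs) \otimes \LA(\bn, \br \vee \bs),
\]
which, on rearranging factors and using Fubini once more, is exactly $\big((\sD_1 \odot \sE) \boxtimes (\sD_2 \odot \sE)\big)(\bn)$.

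Symmetry and associativity coherences for $-\odot \sE$ then transport from those of $\boxtimes$ and $\vee$ along the isomorphisms above, since every step in the coend calculation is natural and compatible with the evident $\SI_2$-action swapping the two factors. The closing statement $\sD^{\boxtimes *} \odot \sE \cong (\sD \odot \sE)^{\boxtimes *}$ is a formal consequence of strong symmetric monoidality: induction on $k$, using the binary isomorphism together with the unit case, produces the identification at each level $\bk$, and naturality in $\bk$ upgrades it to an isomorphism of functors $\LA \rtarr \LA^{op}[\sV]_{\mI_0}$. The main obstacle is verifying that $\sE^{\boxtimes *}$ really is strong symmetric monoidal into the Day-convolution target; unlike the literal associativity of $\otimes$ in \autoref{pedant}, here $\sE^{\boxtimes(p+q)} \cong \sE^{\boxtimes p} \boxtimes \sE^{\boxtimes q}$ is a coherent family of coend identities. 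Thanks to \autoref{loxlox} and its pointwise description in terms of ordered decompositions $j_1 + \cdots + j_k = n$, this reduces to the combinatorial fact that ordered partitions into $p+q$ blocks biject naturally with pairs of ordered partitions into $p$ and $q$ blocks, which is transparent in the $\SI$-sequence picture and transferred to $\LA$ via the isomorphism $\io$ of \autoref{boxbox}.
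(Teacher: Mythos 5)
Your argument is essentially the paper's own: expand $((\sD_1 \boxtimes \sD_2) \odot \sE)(\bn)$ as a nested coend, Fubini past the Day convolution, contract the inner coend via the covariant co-Yoneda lemma to produce $\sE^{\boxtimes(p+q)}$, rewrite via the iterated Day identity $\sE^{\boxtimes(p+q)} \cong \sE^{\boxtimes p}\boxtimes\sE^{\boxtimes q}$, unroll the $\boxtimes$ in the $\bn$ variable, and Fubini back to land on $((\sD_1\odot\sE)\boxtimes(\sD_2\odot\sE))(\bn)$; the unit case and the inductive derivation of $\sD^{\boxtimes *}\odot\sE\cong(\sD\odot\sE)^{\boxtimes *}$ are likewise as in the paper. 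The only cosmetic difference is that you make explicit (as ``strong symmetric monoidality of $\sE^{\boxtimes *}$'') what the paper treats as a definitional equality of iterated Day products, which is a reasonable clarification rather than a divergence.
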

\begin{proof}
Let $\sD,\sD'\in \LA^{op}[\sV]_{\mI_0}$. We must prove that
$$(\sD \boxtimes \sD') \odot \sE \iso (\sD \odot \sE ) \boxtimes (\sD' \odot \sE)$$
and that $\mI_0\boxtimes \sE \iso \mI_0$. For the first, we  compute directly.  While we could use \autoref{loxlox} to help see the conclusion, that is not necessary. For any $\bn \in \LA$,
\begin{align*}
&\quad \quad ((\sD \boxtimes \sD') \odot \sE)(\bn) \\
& =   \int^\bm (\sD\boxtimes \sD')(\bm) \otimes (\sE ^{\boxtimes m}(\bn))\\
&= \int^\bm \left(\int^{\bk_1, \bk_2} \sD(\bk_1) \otimes \sD'(\bk_2) \otimes  \LA(\bm, \bk_1 + \bk_2)
\right) \otimes  (\sE ^{\boxtimes m}(\bn)) \\
& \iso \int^{\bk_1, \bk_2} \sD(\bk_1) \otimes\sD'(\bk_2)  \otimes \int^\bm \LA(\bm, \bk_1 + \bk_2)\otimes  \sE ^{\boxtimes m}(\bn)\\
& \iso  \int^{\bk_1, \bk_2} \sD(\bk_1) \otimes \sD'(\bk_2)  \otimes \sE ^{\boxtimes k_1 +k_2}(\bn) \\
& = \int^{\bk_1, \bk_2}  \sD(\bk_1) \otimes \sD'(\bk_2)  \otimes \left((\sE^{\boxtimes k_1} \boxtimes \sE^{ \boxtimes k_2})(\bn)\right) \\
& = \int^{\bk_1, \bk_2}  \sD(\bk_1) \otimes \sD'(\bk_2)  \otimes \left(\int^{\bj_1, \bj_2} \sE^{\boxtimes k_1} (\bj_1) \otimes \sE^{ \boxtimes k_2} (\bj_2) \otimes  \LA(\bn, \bj_1 + \bj_2) \right) \\
& \iso \int^{\bj_1, \bj_2} \left(\int^{\bk_1} \sD (\bk_1) \otimes \sE^{\boxtimes k_1} (\bj_1)\right) \otimes \left(\int^{\bk_2} \sD'(\bk_2) \otimes \sE^{ \boxtimes k_2} (\bj_2)\right)\otimes \LA(\bn, \bj_1 + \bj_2)\\
& =  \int^{\bj_1, \bj_2} (\sD \odot \sE)(\bj_1) \otimes (\sD' \odot\sE) (\bj_2) \otimes \LA(\bn, \bj_1 + \bj_2) \\
& = \big( (\sD \odot \sE) \boxtimes (\sD' \odot \sE)\big)(\bn) 
\end{align*}

The equalities are given by expansion of definitions.  For the second isomorphism, the Yoneda lemma dual to \autoref{coYon} gives an isomorphism 
$$\sE^{\boxtimes k_1+ k_2}  \cong \int^{\bm \in \LA}  \LA(\bm, \bk_1 + \bk_2)\otimes \sE^{\boxtimes m} $$
for each pair $(k_1,k_2)$.
The first and third isomorphisms use the  categorical Fubini Theorem of \cite[\S IX.8]{Mac}, together with a reordering of variables transposition, which uses $(\LA \times \LA)^{op} \times \LA \times \LA \cong \LA^{op} \times \LA \times \LA^{op} \times \LA$.  For the unit isomorphism,  the definition of $\mI_0$ implies that $\mI_0 \odot \sE(\mathbf{0}) \iso \mI$ and that $\mI_0 \odot \sE(\bn) = \emptyset$ for $n > 0$.  We should check that these isomorphisms satisfy the unit, associativity, and symmetry coherence conditions specified by MacLane \cite[\S XI]{Mac}, but we  leave these straightforward verifications to the reader. Taking $\sD' = \sD^{\boxtimes k}$, we see inductively that the second statement of the proposition follows from the first.
\end{proof}

We shall later need the following consequence of \autoref{key}.

\begin{prop}\label{key2}  Let $X\in \sV_{\mI}$.  Then the functor
 $$ - \otimes_{\LA} X^{\otimes *}:  (\LA^{op}[\sV]_{\mI_0}, \boxtimes, \mI_0) \rtarr (\sV_{\mI}, \otimes, \mI)$$
 is strong symmetric monoidal. Therefore, there is a natural isomorphism 
 $$\sD^{\boxtimes *} \otimes_{\LA} X^{\otimes *} \cong (\sD \otimes_{\LA} X^{\otimes *})^{\otimes *}$$
 for $\sD\in \LA^{op}[\sV]_{\mI_0}$.
 \end{prop}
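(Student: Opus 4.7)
The plan is to mimic the proof of \autoref{key}, replacing $\sE^{\boxtimes *}$ by $X^{\otimes *}$ and replacing the codomain Day convolution $\boxtimes$ by the ordinary tensor product $\otimes$ in $\sV_{\mI}$. The key input that makes this substitution work is that, by \autoref{Xstar} and \autoref{pedant}, $X^{\otimes *}\colon \LA\rtarr \sV_{\mI}$ is a strong symmetric monoidal functor from $(\LA,\vee,\mathbf{0})$ to $(\sV,\otimes,\mI)$: the canonical isomorphisms $X^{\otimes k_1+k_2}\iso X^{\otimes k_1}\otimes X^{\otimes k_2}$ coming from the associator of $\otimes$ are the exact analogs of the isomorphisms $\sE^{\boxtimes k_1+k_2}\iso \sE^{\boxtimes k_1}\boxtimes \sE^{\boxtimes k_2}$ used in the proof of \autoref{key}.

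For the binary part, I would compute $(\sD\boxtimes \sD')\otimes_{\LA} X^{\otimes *}$ by unfolding the definitions into a nested coend, applying the categorical Fubini theorem of \cite[\S IX.8]{Mac} to swap the order of the coends, and then using \autoref{coYon} in the form
\[ \int^{\bn\in \LA} \LA(\bn,\bk_1+\bk_2)\otimes X^{\otimes n} \iso X^{\otimes k_1+k_2}, \]
followed by the splitting $X^{\otimes k_1+k_2}\iso X^{\otimes k_1}\otimes X^{\otimes k_2}$ and a second application of Fubini to separate the resulting double coend over $(\bk_1,\bk_2)$ into a product of two single coends. This yields $(\sD\boxtimes \sD')\otimes_{\LA} X^{\otimes *}\iso (\sD\otimes_{\LA} X^{\otimes *})\otimes (\sD'\otimes_{\LA} X^{\otimes *})$, and each step of the chain of isomorphisms mirrors a step in the calculation already carried out for \autoref{key}. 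For the unit, I would simply observe that $\mI_0(\bk)=\emptyset$ for $k>0$, so the coend defining $\mI_0\otimes_{\LA} X^{\otimes *}$ collapses to $\mI_0(\mathbf{0})\otimes X^{\otimes 0}=\mI\otimes \mI\iso \mI$, compatibly with the base map.

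The coherence conditions of \cite[\S XI]{Mac} for unit, associativity, and symmetry are routine but tedious, and I would dispatch them with the same ``leave to the reader'' move used at the end of \autoref{key}. The final statement then follows by taking $\sD'=\sD^{\boxtimes k}$ and inducting on $k$, with base case $k=0$ given by the unit isomorphism, exactly as in the analogous induction closing the proof of \autoref{key}. The main obstacle is purely clerical: one must carefully track variances and orderings when invoking Fubini, using the identification $(\LA\times \LA)^{op}\times \LA\times \LA\iso \LA^{op}\times \LA\times \LA^{op}\times \LA$ as noted there. There is no new conceptual difficulty, only the recognition that $X^{\otimes *}$ and $\sE^{\boxtimes *}$ play formally identical roles in the two computations.
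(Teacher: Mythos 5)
Your proof is correct, but it takes a genuinely different route from the paper's. You redo the coend calculation from the proof of \autoref{key} with $X^{\otimes *}$ substituted for $\sE^{\boxtimes *}$ and the target $\otimes$ for the target $\boxtimes$; as you say, this works because $X^{\otimes *}\colon\LA\rtarr\sV_\mI$ is strong symmetric monoidal in exactly the way $\sE^{\boxtimes *}\colon\LA\rtarr\LA^{op}[\sV]_{\mI_0}$ is, so the Fubini--co-Yoneda--splitting chain and the collapse of the unit coend go through verbatim. The paper instead gets the result for free from what is already proven: by \autoref{KellyLA} (equivalently \autoref{iodot}), $p_0(\sD\odot i_0X)=\bD X=\sD\otimes_{\LA}X^{\otimes *}$, so $-\otimes_{\LA}X^{\otimes *}$ is literally the composite $p_0\circ(-\odot i_0X)$, and \autoref{strongp} and \autoref{key} already show that $p_0$ and $-\odot i_0X$ are strong symmetric monoidal. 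Your approach buys self-containment and makes the parallel with \autoref{key} completely explicit; the paper's buys a one-line proof, avoids repeating the coend manipulations and the attendant coherence checks that were already waved at once, and foregrounds the factorization through $i_0$ and $p_0$ that is reused later in the bar-construction comparison.
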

\begin{proof}  By \autoref{Kelly0LA}, the functor $ - \otimes_{\LA} X^{\otimes *}$ is the composite of $p_0$ and $ - \odot i_0X$, which are strong symmetric monoidal by \autoref{strongp} and \autoref{key}.
\end{proof}
 
\begin{proof}[Proof of \autoref{boxtimes}]
For convenience, following \cite{Kelly0}, we write
$${\mI}_1 \cong \mI \otimes \LA(-, \mathbf{1}). $$
To see that ${\mI}_1$ is a left unit, we observe by \autoref{coYon} that 
$${\mI}_1 \odot \sD = \int^{\bk} {\mI}_1(\bk)  \otimes \sD^{\boxtimes k}(-)\cong \int^{\bk} \mI\otimes \LA(\bk,\mathbf{1})  \otimes \sD^{\boxtimes k}(-) \cong \sD^1. $$
To see that it is also a right unit, we first compute
\begin{align*}
{\mI}_1^{\boxtimes k} & = \int^{\bj_1, \ldots, \bj_k} (\LA(\bj_1, \mathbf{1}) \otimes \mI) \otimes \ldots \otimes (\LA(\bj_k,\mathbf{1}) \otimes \mI) \otimes  \LA(-, \bj_1 + \ldots + \bj_k) \\
& = \LA(-, \mathbf{1} + \ldots +\mathbf{1}) \otimes  \mI \\
& = \LA(-,\bk) \otimes \mI.
\end{align*}
Using \autoref{coYon} again, this implies  that
$$\sD \odot {\mI}_1 = \int^{\bk} \sD(\bk) \otimes {\mI}_1^{\boxtimes k}(-) \cong \int^{\bk}  \sD(\bk) \otimes \LA(-,\bk) \otimes \mI \cong\sD.$$
The associativity follows from \autoref{key}:
\begin{align*}
\sD \odot (\sE \odot \sF) &=\sD  \otimes_{\LA} (\sE \odot \sF)^{\boxtimes *}  \\
& \iso \sD \otimes_{\LA} (\sE^{\boxtimes *} \odot\sF)\\
& = \sD \otimes_{\LA} (\sE^{\boxtimes *} \otimes_{\LA} \sF^{\boxtimes *})\\
& \iso (\sD\otimes_{\LA} \sE^{\boxtimes *}) \otimes_{\LA} \sF^{\boxtimes *} \\
& = (\sD \odot \sE) \odot \sF.
\end{align*}

Finally, consider the statement regarding $i_1$. As the category under the unit,
$\LA^{op}[\sV]_{\mI_1}$ is certainly monoidal. 
Observe that $i_1X =  X \otimes \LA(-, \mathbf{1})$ for $X\in \sV_{\mI}$.
Writing  $X\otimes \sE$ more generally for the levelwise tensor of $X$ with $\sE \in \LA^{op}[\sV]_{\mI_1}$, 
we have
\begin{align*}
i_1X \odot\sE(-) & = \int^{\bk} (i_1X)(\bk) \otimes \sE^{\boxtimes k}(-)  \\
& \cong \int^{\bk}  X \otimes \LA(\bk, \mathbf{1}) \otimes \sE^{\boxtimes k} (-)  \\
& \cong X \otimes  \int^{\bk}   \LA(\bk, \mathbf{1}) \otimes \sE^{\boxtimes k}(-)    \\
& \cong X \otimes \sE (-). 
\end{align*}
Taking $\sE = i_1(Y)$ and observing that $X \otimes i_1(Y) \cong i_1(X \otimes Y)$, we have
\begin{equation*}
 i_1(X) \odot i_1(Y) \cong i_1(X \otimes Y).   \qedhere
\end{equation*}
\end{proof}

Now that we have $\odot$, we can use \autoref{key} to give a conceptual reinterpretation of \autoref{boxbox}.  For a monoid 
$\sE$ in the  monoidal category $(\LA^{op}[\sV],\odot,\mI_0)$, a right $\sE$-module is an object $\sD$  in $\LA^{op}[\sV]$ with a right action $ \sD\odot \sE \rtarr \sD$ satisfying the standard axioms, as in \autoref{module} below. 
\autoref{key} has the following immediate consequence. 
 \begin{cor}
\label{rightmodule} For a monoid $\sE$ in $(\LA^{op}[\sV],\boxtimes,\mI_0)$, the category of right $\sE$-modules inherits a symmetric monoidal product $\boxtimes$ from $\boxtimes$ on $\LA^{op}[\sV]_{\mI_0}$.
 \end{cor}
 \begin{proof}
   Let $\sD$ and $\sD'$ be right $\sE$-modules. Then the right $\sE$-module
   structure on $\sD \boxtimes \sD'$ is given by 
\begin{equation*}
(\sD \boxtimes \sD') \odot \sE \cong (\sD \odot \sE) \boxtimes (\sD' \odot \sE) \rtarr \sD \boxtimes \sD'.
\end{equation*}
Routine diagram chasing proves the result.
 \end{proof}
 
Recall that $\mI_1$ is the $\Lambda$-sequence defined  in \autoref{notn4}.
\begin{thm}(\cite[5.1.8]{Fresse0})
  \label{LA-SI}
Viewing $\mI_1$ as a $\Sigma$-sequence, the category $\Lambda^{op}[\sV]$ is isomorphic to the category of right
  $\mI_1$-modules in $(\Sigma^{op}[\sV], \odot_{\Sigma})$.
\end{thm}
\begin{proof}[Proof sketch]  
Taking $C= \mI$ in Fresse's \cite[5.1.1 and 5.1.7]{Fresse0}, his $*_{C}$ is our $\mI_1$.  Comparing combinatorics shows that  his description of a right $\mI_1$-module structure on a symmetric sequence $\sD$ gives $\sD$ the structure of a $\LA$-sequence, and conversely.
\end{proof}

\begin{rem} Consider $\LA$-sequences $\sD$ and $\sE$.  Viewing them as right $\mI_1$-modules in 
$(\Sigma^{op}[\sV], \odot_{\Sigma})$, the $\SI$-variant of \autoref{rightmodule}, says that
$\sD\boxtimes_{\SI}\sE$ is again a right $\mI_1$-module and thus a $\LA$-sequence. The content of \autoref{boxbox} is that this $\LA$-sequence is precisely the Day convolution product 
$\sD\boxtimes_{\Lambda}\sE$.
\end{rem} 

\section{The redefinition of based and unital operads}\label{oper}

With the monoidal category $\LA^{op}[\sV]_{\mI_0}$ in place, we can give our new definition of based and unital operads in $\sV$.

\begin{defn}\label{new} A based operad in $\sV$ is a monoid in the monoidal category $\LA^{op}[\sV]_{\mI_0}$.
 A unital operad in $\sV$ is a monoid in the monoidal category $\LA^{op}_{\mI}[\sV]$.
\end{defn}

We shall not repeat the complete ``classical'' definition of an operad $\sC$ in $\sV$ \cite{Kelly0, MayOp1, MayGeo}, but we denote its structural maps by 
$$\ga\colon \sC(k)\otimes \sC(j_1) \otimes \cdots \otimes \sC(j_k) \rtarr \sC(j),$$
where $j = j_1+ \cdots + j_k$. These maps are required to be associative, unital, and equivariant in the sense prescribed in \cite{MayGeo, MayOp1}.    Of course, we say that $\sC$ is unital if $\sC(0) = \mI$.  With the classical definition, we have the following standard observation.

\begin{lem}\label{Cstar}  If $\sC$ is a based operad, then the $\sC(k)$ give the values on objects of a $\LA$-sequence.
\end{lem}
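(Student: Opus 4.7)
The plan is to leverage the fact, noted just before the statement, that $\LA$ is generated by the permutations (which form $\SI$) together with the ordered injections $\sigma_i \colon \mathbf{n-1}\rtarr \bn$ skipping $i$. A classical based operad already has an underlying $\SI$-sequence coming from the right $\SI_n$-actions on $\sC(n)$ that are part of its definition. Therefore, to promote $\sC$ to a $\LA$-sequence it suffices to produce maps $\sigma_i^*\colon \sC(\bn)\rtarr \sC(\mathbf{n-1})$ for $1\le i\le n$ that compose correctly with each other and satisfy the expected equivariance under permutations; the base map required of a $\LA$-sequence is of course the base map $\et\colon \mI\rtarr \sC(\mathbf 0)$ that $\sC$ comes with.

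The construction of $\sigma_i^*$ is to ``plug the basepoint into the $i$th slot.'' Using the operadic unit $\id\colon \mI\rtarr \sC(1)$ and the base map $\et\colon \mI\rtarr \sC(\mathbf 0)$, I define $\sigma_i^*$ to be the composite
\[
\sC(\bn)\iso \sC(\bn)\otimes \mI^{\otimes n}
\xrightarrow{\id\otimes (\id^{\otimes i-1}\otimes \et\otimes \id^{\otimes n-i})}
\sC(\bn)\otimes \sC(\mathbf 1)^{\otimes i-1}\otimes \sC(\mathbf 0)\otimes \sC(\mathbf 1)^{\otimes n-i}
\xrightarrow{\ga}\sC(\mathbf{n-1}),
\]
which matches the classical convention (and with the special case used to define $\bD X$ in \autoref{DXdefn}).

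Next I check the two families of relations that present $\LA$. First, the simplicial-style identity $\sigma_i\com \sigma_{j-1}=\sigma_j\com \sigma_i$ in $\LA$ for $i<j$ must be turned into $\sigma_{j-1}^*\sigma_i^* = \sigma_i^*\sigma_j^*$ on $\sC$; this is an application of the associativity axiom of $\ga$, because inserting the basepoint into two different slots in either order produces the same composite when the operad law is used to collapse the nested $\ga$'s, and the remaining identity $\sC(1)$'s are absorbed by the unit axiom. Second, for a permutation $\tau\in \SI_n$ and an ordered injection $\sigma_i$, the equality of $\sigma_i\com \tau'$ and $\tau\com \sigma_{i'}$ for the appropriate induced data is a purely combinatorial fact in $\LA$, and its image in $\sC$ is exactly the equivariance axiom for $\ga$ applied to the one ``zero'' input at position $i$ and the $n-1$ other ``unit'' inputs. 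Together these say that $\sigma_i^*$ together with the existing $\SI$-action assemble into a contravariant functor on $\LA$.

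The main obstacle is keeping the bookkeeping straight in the equivariance step: one must track how a permutation of the $n$ inputs of $\sC(\bn)$ interacts, under $\ga$, with a basepoint inserted at a designated slot, and produce precisely the permutation in $\SI_{n-1}$ that the corresponding $\LA$-identity demands. Once the operad axioms are written out in this specialized form, however, the verification is mechanical, and the base map of the resulting $\LA$-sequence is tautologically $\et\colon \mI\rtarr \sC(\mathbf 0)$. The unital case is then immediate: if $\sC(\mathbf 0)=\mI$ and $\et=\id$, the underlying $\LA$-sequence is unital in the sense of \autoref{notn2}.
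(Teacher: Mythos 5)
Your construction of $\sigma_i^*$ by tensoring with $\io^{\otimes i-1}\otimes \et\otimes \io^{\otimes n-i}$ and applying $\ga$ is exactly the composite the paper writes down, so the approach is the same. You additionally sketch the verification that the simplicial and equivariance relations hold, which the paper leaves implicit; that extra detail is correct and harmless.
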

\begin{proof} The object $\sC(k)$ comes with a right action of $\SI_k$.  We have the base object map $\et\colon \mI\rtarr\sC(0)$, and $\sC(1)$ comes with a unit map $\io\colon \mI \rtarr \sC(1)$.    We define $\si_i\colon \sC(k)\rtarr \sC(k-1)$, $1\leq i\leq k$, to be the composite
\[
\xymatrix{\sC(k) \iso   \sC(k) \otimes \mI^{\otimes k} \ar[d]^-{\id\otimes \io^{\otimes i-1} \otimes \et \otimes \io^{\otimes k-i }} \\
\sC(k) \otimes \sC(1)^{\otimes i-1} \otimes \sC(0)\otimes \sC(1)^{\otimes k-i} \ar[d]^{\ga} \\
\sC(k-1).\\} 
\]
\end{proof}

The following restatement of \autoref{FRP} is an instance of the first author's favorite kind of comparison result. 

\begin{thm}\label{taut} The category of based operads in the new sense is isomorphic to the category of based operads in the classical sense.  
The category of unital operads in the new sense is isomorphic to the category of unital operads in the classical sense. 
\end{thm}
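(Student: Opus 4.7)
The plan is to build on Kelly's theorem (\autoref{Kelly}) and the base-point identification description of $\odot_{\LA}$ as a quotient of $\odot_{\SI}$ given just after the statement of \autoref{boxtimes}. The strategy is to unpack a monoid structure $(\mu,\un)$ on a $\LA$-sequence in $\LA^{op}[\sV]_{\mI_0}$ and show it is naturally identified with the classical operad data $(\gamma, \iota, \eta)$, then verify the axioms correspond.

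First I would analyze the unit. A map of $\LA$-sequences $\un\colon \mI_1\rtarr \sC$ consists of its values at levels $\mathbf{0}$ and $\mathbf{1}$. At level $\mathbf{0}$ it is a base map $\eta\colon \mI\rtarr \sC(\mathbf{0})$; at level $\mathbf{1}$ it is a map $\iota\colon \mI \rtarr \sC(\mathbf{1})$. Compatibility with the injection $\mathbf{0}\rtarr \mathbf{1}$ in $\LA$ forces $\sigma_1\iota = \eta$, which is one of the axioms relating the operad unit and base point via the $\LA$-sequence structure. Next I would analyze the multiplication. Using \autoref{boxbox} and \autoref{loxlox}, the component $(\sC \odot_{\SI} \sC)(\bn)$ is a coproduct of $\Sigma_n$-orbit tensors over sequences $(j_1,\ldots,j_k)$ with $j_1+\cdots+j_k = n$, and a map from this to $\sC(\bn)$ is, by the usual adjunction with free $\Sigma_n$-orbits, the same as an equivariant family of maps
\[
\gamma\colon \sC(\bk)\otimes \sC(\bj_1)\otimes \cdots \otimes \sC(\bj_k)\rtarr \sC(\bn),
\]
which is exactly the classical $\gamma$. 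This gives the isomorphism on underlying symmetric-sequence data by Kelly.

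The main content then is to match the extra structure. The quotient $\odot_\SI \twoheadrightarrow \odot_\LA$ is generated by the coequalizers described after \autoref{boxtimes}, one for each insertion $\sigma_i\colon \bk\rtarr \mathbf{k+1}$. A map $\mu\colon \sC\odot_\SI\sC \rtarr \sC$ descends to $\sC\odot_\LA\sC$ if and only if, for each $k$, $i$, and $(j_1,\ldots,j_k)$, the two composites
\[
\gamma\circ(\sigma_i\otimes \id) \quad \text{and}\quad \gamma\circ(\id\otimes\cdots\otimes \eta\otimes \cdots\otimes\id)
\]
from $\sC(\mathbf{k+1})\otimes \sC(\bj_1)\otimes\cdots\otimes \sC(\bj_k)$ to $\sC(\bn)$ coincide. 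Unwinding the formula for $\sigma_i$ from \autoref{Cstar}, this is exactly the operad associativity identity inserting the $0$-ary element $\eta$ in the $i$th slot using $\iota$ in the remaining slots. Conversely, given a classical based operad, one defines $\sigma_i$ by the formula of \autoref{Cstar}; operad associativity then guarantees the descent. Thus descent of $\mu$ to $\odot_\LA$ is equivalent to compatibility of $\eta$ with $\gamma$ and $\iota$, i.e.\ to the classical based-operad axioms involving $\sC(0)$.

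Finally I would verify the monoid axioms translate term-by-term. Associativity of $\mu$, rewritten via \autoref{key} and the explicit coend formulas, is the iterated nested-composition identity, which is the classical $\gamma$-associativity. The unit axioms $\mu\circ(\un\odot\id)=\id=\mu\circ(\id\odot\un)$ reduce, after the computation $\mI_1^{\boxtimes k}\cong \LA(-,\bk)\otimes \mI$ used in the proof of \autoref{boxtimes}, to the operad unit equations $\gamma(\iota;c)=c=\gamma(c;\iota,\ldots,\iota)$. Assembling these equivalences gives an isomorphism of categories; restricting to those $\sC$ with $\sC(\mathbf{0})=\mI$ and $\eta=\id$ yields the unital statement. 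The main obstacle I anticipate is not any single step but the bookkeeping required to match the base-point identifications in $\odot_\LA$ with the degeneracy maps $\sigma_i$ produced from $\gamma$; once the two are shown to encode the same relation, the remainder is routine.
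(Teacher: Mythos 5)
Your proposal is correct and follows essentially the same route as the paper: bootstrap from Kelly's theorem, use the explicit description of $\odot_{\LA}$ as a quotient of $\odot_{\SI}$, identify the unit map $\mI_1\rtarr\sC$ with the pair $(\eta,\iota)$, and show that the product descends through $q$ precisely when the classical axioms involving $\sC(0)$ hold (with the injection actions recovered from $\gamma$ via \autoref{Cstar}). One small point worth making explicit, which the paper flags in \autoref{distinct}, is that in the new definition the $\LA$-sequence structure is input data, so the equivalence is with the subcategory of classical based operads whose \autoref{Cstar}-induced injection actions match the prescribed ones; your "conversely" sentence handles this implicitly but it deserves emphasis.
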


If $\sC$ is a monoid in $\LA^{op}[\sV]_{\mI_0}$,  then its product is given by structure maps
\[  \ga\colon \sC(k)\otimes \sC(j_1)\otimes \cdots \otimes\sC(j_k) \rtarr \sC(j_1 + \cdots + j_k).\]
Restricting to $\SI$, a direct comparison of definitions shows that to give a well-defined monoid structure, these must satisfy the precise associativity, unity, and equivariance constraints  specified in \cite{Kelly0, MayGeo, MayOp1}, and th

\begin{rem}\label{distinct}  There is a crucial logical distinction to be made between \autoref{Kelly0} and \autoref{taut}.  In the classical definition of an operad, one starts with a given $\SI$-sequence.   That is, the data one starts with is a set of objects $\sC(j)$ with given $\SI_j$-actions.  One then builds in structure maps on top of that.  In contrast, for based operads, the action of the injections in $\LA$ is given by the structure maps $\ga$ of the operad via \autoref{Cstar}, and their relationship to the rest of the operad structure is built into the unit and associativity axioms of the operad.    Thus, in interpreting \autoref{taut}, if we start with a given $\LA$-sequence, we must restrict attention to those operads whose underlying $\LA$-sequences, as built from the operad structure maps and the base map, coincide with the $\LA$-sequence that we start with.  This distinction loses force when we restrict attention to unital operads.  There, the definition of the underlying $\LA$-sequence is often clear without prior knowledge of $\ga$.
\end{rem}
 
\begin{proof}[Proof of \autoref{taut}]
For $\sD,\sE\in \LA[\sV]_{\mI_0}$, \autoref{loxlox} shows that the quotient map  
$q\colon \sD\otimes_{\SI}\sE \rtarr \sD \otimes_{\LA} \sE$   
that we have in general restricts to a quotient map 
$q\colon \sD\odot_{\SI}\sE \rtarr \sD \odot_{\LA} \sE$. 
We shall use that to deduce \autoref{taut} from \autoref{Kelly0}.  

With the understanding of \autoref{distinct}, let $\sC$ be a $\LA$-sequence with base map $\et\colon \mI\rtarr \sC(0)$.  Forgetting from $\LA$ to $\SI$,  a classical operad structure on the symmetric sequence $\sC$ is the same as a monoid structure on $\sC$.  Now consider based operad structures on $\sC$ with the given underlying $\LA$-sequence. The claim is that these are the same as monoid structures on the $\LA$-sequence $\sC$. This means that  the unit and product of the monoid in $\SI^{op}[\LA]$ factor as composites
\begin{equation}
\xymatrix@1{\mI_1^{\SI} \ar[r]^-{\nu} &  \mI^{\LA}_1 \ar[r]^-{\et} & \sC} \ \ \text{and}  \ \ 
\xymatrix@1{\sC\odot_{\SI}\sC  \ar[r]^-{q} &  \sC\odot_{\LA} \sC \ar[r]^-{\mu} & \sC.}
\end{equation}
The  map $\nu$ is the canonical comparison of unit $\SI$-sequences from \autoref{odotunits}. The map $\et$ is given by the base map 
$\mI \rtarr \sC(0)$ at level $0$, the unit map $\io \colon \mI\rtarr \sC(1)$ at level $1$, and the unique map $\emptyset \rtarr \emptyset$ at levels $n > 1$.  A unit condition in the definition of an operad gives that the triangle commutes in the following diagram

\[ \xymatrix{
\mI \ar[r]^-{\et}  \ar[d]_{\io} & \sC(0) \ar[r]^-{\iso} & \mI\otimes \sC(0)  \ar[dr]^{\iso} \ar[d]_{\io\otimes \id} & \\
\sC(1) \ar[r]_-{\iso} & \sC(1)\otimes \mI \ar[r]_-{\id\otimes \et} & \sC(1)\otimes \sC(0)  \ar[r]_-{\ga} & \sC(0). \\}
\]

By comparison with \autoref{Cstar}, this shows that  $\et$ is a map of $\LA$-sequences, and it is clear that $\et\com \nu$ is the unit of the monoid 
$\sC$ in $\SI^{op}[\sV]$.  Via \autoref{Cstar} and associativity conditions in the definition of a operad, there is  a unique map $\mu$ of $\LA$-sequences such that $\mu\com q$ is the product of the monoid $\sC$ in $\SI^{op}[\LA]$.  The associativity and unit conditions ensuring that $\mu$ and $\et$ give $\sC$ a structure of monoid in $\LA^{op}[\sV]_{\mI_0}$ are inherited from the monoid conditions for $\sC$ in $\SI^{op}[\LA]$.
\end{proof}

\section{A comparison of monoids and monads}\label{mmcomp}

We can now recast the specification of a monad in $\sV_{\mI}$ associated to a unital operad $\sC$ in terms of the Kelly product $\odot$.  Recall \autoref{Xstar}  and \autoref{DXdefn}.  We could work with based operads in $\LA^{op}[\sV]_{\mI_0}$, but we prefer to focus on
 $\LA^{op}_{\mI}[\sV]$. We have the following fundamental comparison.  

\begin{defn}\label{odot}  For $\sD, \sE\in \Lambda^{op}_{\mI}[\mathcal{V}]$, define  $\bD\odot \bE$ to be the functor on 
$\sV_{\mI}$ that \autoref{DXdefn} associates to $\sD\odot \sE \in \Lambda^{op}_{\mI}[\mathcal{V}]$.  That is, for $X\in \sV_{\mI}$,
\[   (\bD\odot\bE)(X) = (\sD \odot \sE) \otimes_{\Lambda} X^{\otimes *}.\]
\end{defn}

\begin{thm}
\label{equivcomp}
Let $\sD, \sE\in \Lambda^{op}_{\mI}[\mathcal{V}]$ and  $X\in \sV_{\mI}$.  Then the functors $\bD\odot \bE$ and $\bD\circ \bE = \bD\bE$ are isomorphic: 
$$(\bD \odot \bE)(X) \cong (\bD \bE)(X).$$
\end{thm}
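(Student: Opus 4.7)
The plan is to unwind all the definitions and reduce the claim to a direct application of \autoref{key2}, which tells us that the functor $- \otimes_{\LA} X^{\otimes *}$ is strong symmetric monoidal from $(\LA^{op}[\sV]_{\mI_0}, \boxtimes, \mI_0)$ to $(\sV_{\mI}, \otimes, \mI)$. The key identity supplied there is
\[
\sD^{\boxtimes *} \otimes_{\LA} X^{\otimes *} \cong (\sD \otimes_{\LA} X^{\otimes *})^{\otimes *} = (\bD X)^{\otimes *},
\]
natural in $\sD$ and $X$. This is exactly the bridge needed to interchange $\boxtimes$ with $\otimes$ across the two bar-like coends that appear in $\bD \odot \bE$ versus $\bD \circ \bE$.

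First I would expand the left-hand side using \autoref{odot} and \autoref{KellyLA}:
\[
(\bD \odot \bE)(X) = (\sD \odot \sE) \otimes_{\LA} X^{\otimes *} = (\sD \otimes_{\LA} \sE^{\boxtimes *}) \otimes_{\LA} X^{\otimes *}.
\]
Next I would use an iterated Fubini/associativity-of-coends argument (as in the proof of associativity of $\odot$ given in \autoref{key}, or equivalently the categorical Fubini theorem already invoked there) to reassociate this as
\[
\sD \otimes_{\LA} \bigl(\sE^{\boxtimes *} \otimes_{\LA} X^{\otimes *}\bigr),
\]
where the inner tensor is taken variable-by-variable in the $\boxtimes$-degree $k$ of $\sE^{\boxtimes k}$.

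Then I would apply \autoref{key2} in the form displayed above to the inner expression, yielding
\[
\sE^{\boxtimes *} \otimes_{\LA} X^{\otimes *} \cong (\bE X)^{\otimes *}.
\]
Substituting this and recognizing the outer coend gives
\[
\sD \otimes_{\LA} (\bE X)^{\otimes *} = \bD(\bE X) = (\bD \circ \bE)(X),
\]
which is the desired isomorphism. All intermediate isomorphisms are natural in $X$, so this assembles to a natural isomorphism of functors on $\sV_{\mI}$.

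The main obstacle, and the only step that is not a purely formal bookkeeping of definitions, is the reassociation step that turns the iterated coend $(\sD \otimes_{\LA} \sE^{\boxtimes *}) \otimes_{\LA} X^{\otimes *}$ into $\sD \otimes_{\LA} (\sE^{\boxtimes *} \otimes_{\LA} X^{\otimes *})$. One must verify carefully that the covariant $\LA$-functoriality of $\sE^{\boxtimes *}$ in its $\boxtimes$-index $*$ (which was described explicitly at the end of \autoref{comb}) is compatible with the covariant $\LA$-functoriality of $X^{\otimes *}$ from \autoref{Xstar}, so that the two coends over $\LA$ can be interchanged. Once this is in place, \autoref{key2} does the heavy lifting and the remaining steps are formal.
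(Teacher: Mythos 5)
Your proof is correct and follows essentially the same route as the paper's: expand $\odot$, reassociate $(\sD \otimes_{\LA} \sE^{\boxtimes *}) \otimes_{\LA} X^{\otimes *}$ to $\sD \otimes_{\LA}(\sE^{\boxtimes *} \otimes_{\LA} X^{\otimes *})$ using associativity of $\otimes_{\LA}$, then apply \autoref{key2} to the inner factor. The concern you raise about the reassociation step is precisely what the paper packages as ``the associativity of $\otimes_{\LA}$'' (handled by the categorical Fubini argument already invoked in the proof of \autoref{key}), so your proposal and the paper's proof coincide.
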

\begin{proof}
It follows from the associativity of $\otimes_{\LA}$  and  \autoref{key2} that
\begin{align*}
(\sD \odot \sE) \otimes_{\Lambda} X^{\otimes *} 
 & = (\sD \otimes_{\Lambda} \sE^{\boxtimes *})  \otimes_{\Lambda} X^{\otimes *}  \\
 & \cong \sD \otimes_{\Lambda}( \sE^{\boxtimes *} \otimes_{\Lambda} X^{\otimes *})  \\
 & \cong \sD \otimes_{\Lambda} ( \sE \otimes_{\Lambda} X^{\otimes *})^{\otimes *}   \\
 & = \sD \otimes_{\Lambda} (\bE X)^{\otimes *}   \qedhere  \\
 \end{align*}   

 \end{proof}
We can now see that the monads associated to operads arise from their structures as monoids.  We state the result in the unital case, but the based case works the same way.

\begin{thm} Let $\sC$ be a unital operad.  Then  $\bC$ is a monad on the category $\sV_{\mI}$.
\end{thm}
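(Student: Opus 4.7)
The plan is to extract the monad structure on $\bC$ from the monoid structure on $\sC$ by exploiting the comparison theorem \autoref{equivcomp}, which is really the statement that the assignment $\sD \mapsto \bD$ is strong monoidal from $(\LA^{op}_{\mI}[\sV], \odot, \mI_1)$ to the category of endofunctors on $\sV_{\mI}$ under composition. Once this is established, the standard fact that strong monoidal functors send monoids to monoids immediately gives the result, since a monoid in $(\mathrm{End}(\sV_{\mI}), \circ, \mathrm{Id})$ is precisely a monad on $\sV_{\mI}$.

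First, I would verify that the unit $\mI_1$ of $\odot$ is sent by $\bar{(-)}$ to the identity functor on $\sV_{\mI}$. Unpacking \autoref{DXdefn}, for $X\in \sV_{\mI}$ the object $\bar{\mI_1}X = \mI_1 \otimes_{\LA} X^{\otimes *}$ has contributions only from $\mI_1(\mathbf{0})\otimes X^{\otimes 0} = \mI$ and $\mI_1(\mathbf{1})\otimes X = X$; the coequalizer relation induced by the injection $\mathbf{0}\rtarr \mathbf{1}$ identifies the $\mI$ summand with the image of the base map $\et\colon \mI\rtarr X$, so $\bar{\mI_1}X \iso X$ naturally in $X$. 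This gives the unit isomorphism of the would-be monoidal functor.

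Next, \autoref{equivcomp} supplies the multiplicative isomorphism $\bD \odot \bE \iso \bD \circ \bE$ for all $\sD, \sE \in \LA^{op}_{\mI}[\sV]$. Now the monoid structure on $\sC$ consists of a unit $\et\colon \mI_1 \rtarr \sC$ and a product $\mu\colon \sC \odot \sC \rtarr \sC$ satisfying the monoid axioms in $\LA^{op}_{\mI}[\sV]$. Applying the functor $\bar{(-)}$ and composing with the natural isomorphisms above yields
\[
\mathrm{Id} \xrightarrow{\ \iso\ } \bar{\mI_1} \xrightarrow{\ \bar{\et}\ } \bC \quad \text{and} \quad \bC\bC \xrightarrow{\ \iso\ } \overline{\sC\odot \sC} \xrightarrow{\ \bar{\mu}\ } \bC,
\]
which are the candidate unit and multiplication of the monad.

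Finally, I would check that the monad axioms (associativity pentagon and left/right unit triangles) for $\bC$ follow from the monoid axioms for $\sC$. This reduces to verifying that the isomorphisms $\bD \odot \bE \iso \bD\bE$ and $\bar{\mI_1} \iso \mathrm{Id}$ satisfy the coherence diagrams making $\bar{(-)}$ a strong monoidal functor, so that the monoid diagrams for $(\sC,\mu,\et)$ in $\LA^{op}_{\mI}[\sV]$ transport to monad diagrams for $(\bC,\bar\mu,\bar\et)$ in $\mathrm{End}(\sV_{\mI})$. The potentially delicate point, and the one I expect to require the most care, is confirming the coherence of the iterated comparison isomorphism arising from \autoref{equivcomp}: one must trace through the proof of \autoref{equivcomp}, which uses associativity of $\otimes_{\LA}$ together with \autoref{key2}, to see that the two ways of identifying $\overline{\sC \odot \sC \odot \sC}$ with $\bC\bC\bC$ agree. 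Once that is verified, the result is a formal consequence.
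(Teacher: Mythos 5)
Your proposal takes essentially the same route as the paper: compute $\bar{\mI_1} \iso \mathrm{Id}$, invoke \autoref{equivcomp} to get $\bD\odot\bE \iso \bD\circ\bE$, and transport the monoid structure on $\sC$ to a monad structure on $\bC$. The paper states this more tersely (asserting "diagram chases from the monoid axioms on $\sC$ show that they imply the monad axioms on $\bC$" rather than explicitly invoking the principle that strong monoidal functors send monoids to monoids), but the underlying argument is identical, and your version has the small merit of naming the coherence check that the paper's phrase "diagram chases" implicitly subsumes.
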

\begin{proof}  We view $\sC$ as a monoid in $\LA^{op}_{\mI}[\sV]$.  The product $\mu\colon \bC\bC \rtarr \bC$ is  the functor 
\[    \bC\bC \iso \bC\odot \bC  \rtarr \bC \] 
induced by the product on $\sC$.  Observe that the functor $\bI_1$ on $\sV_{\mI}$ that \autoref{DXdefn} associates to 
$\mI_1$ is isomorphic to the identity.   The unit $\et\colon \id \rtarr \bC$  is the functor
\[  \id \iso \bI_1 \rtarr \bC \]
induced by the unit $\mI_1\rtarr \sC$ of $\sC$.  Diagram chases from the monoid axioms on $\sC$ show that they imply the monad axioms on $\bC$.
\end{proof}

Of course, we can compare definitions to see that this is the same monad structure that was defined in \cite{MayGeo, MayOp1}.  The classical definition of an action $\tha$ of $\sC$ on an object $X$ of $\sV_I$ is equivalent to giving an 
action $\tha\colon \bC X \rtarr X$ of the monad $\bC$ on $X$.  We can give a monoidal translation, but it will help to
digress a bit.  The notions of a left and a right module over a monoid in any monoidal category are standard, but they may seem unfamiliar in the present context of a monoid $\sC$ in $\LA^{op}_{\mI}[\sV]$.   We record the definition of a right module.

 \begin{defn}\label{module}
A right $\sC$-module $(\sM, \rho)$ is a functor $\sM \in \Lambda^{op}_{\mI}[\mathcal{V}]$ equipped with a right action 
$\rho: \sM \odot \sC \rtarr \sM$ such that the following diagrams commute: 
\[
\begin{tikzcd}
\sM \odot {\mI}_1 \arrow[rd, "\cong"', no head] \arrow[r, "id \odot \eta"] & \sM \odot \sC \arrow[d, "\rho"] \\
                                  & \sM               
\end{tikzcd} \quad \quad
\begin{tikzcd}
\sM \odot \sC \odot\sC  \arrow[r, " \rho \odot id"] \arrow[d, "id \odot \gamma  "'] & \sM \odot
 \sC \arrow[d, "\rho"] \\
\sM \odot \sC  \arrow[r, "\rho"']                & \sM             
\end{tikzcd}
\]
where $\eta\colon \mI_1 \rtarr \sC$ is the unit and $\gamma: \sC \odot \sC \rtarr \sC$ is the product of $\sC$.
\end{defn}

Of course, we have the symmetric definition of a left $\sC$-module $\sN$.  One obvious example is $\sN= \sC$.  However, we are interested in $\sC$-algebras.  Recall that, in monadic form, an action of $\sC$  on an object $X$ of $\sV_{\mI}$ is given by a map $\tha\colon \bC X \rtarr X$ such that the following diagrams commute in $\sV_{\mI}$.
\[\xymatrix{
 \bI_1X   \ar[r]^-{\et} \ar[dr]_{\iso} & \bC X \ar[d]^{\tha} \\
 & X}
 \ \ \ \ \ \ \
 \xymatrix{
 \bC\bC X  \ar[d]_{\bC\tha} \ar[r]^-{\mu}  & \bC X  \ar[d]^{\tha} \\
 \bC X  \ar[r]_-{\tha}  &  X \\} \]
By \autoref{equivcomp}, the upper left corner of the second diagram is isomorphic to $(\bC\odot
\bC)(X)$ and $\mu$ is induced by the product $\ga$ on $\sC$.
We can apply the functor $(-)^*$ of \autoref{Xstar} to these diagrams in $\sV_{\mI}$ to obtain commutative diagrams in $\LA[\sV_{\mI}]$.  These left structures, being given by covariant functors, do not fit into the monoidal $2$-sided bar construction  as recalled in the introduction, but they do fit into a  variant that we will describe in the next section. 

To fit $\sC$-algebras into the monoidal bar construction, we instead exploit $i_0 $ and $p_0$ of \autoref{notn3}.   Recall that \autoref {iodot} gives 
that $\sC\odot i_0X \iso i_0\bC X$ in $\LA^{op}_{\mI}[\sV]$.  Applying $i_0$ to the diagrams above and using this commutation relation together with the observation that $p_0\com i_0 = \id$, we obtain the following examples of left $\sC$-modules.

\begin{lem}\label{iodot2}  For a unital operad $\sC$ in $\sV$, application of $i_0$ and $p_0$ gives an isomorphism between the category of $\bC$-algebras in $\sV_{\mI}$ (equivalently $\sC$-algebras in $\sV_{\mI}$) and of left $\sC$-modules that are $\emptyset$ above level $0$. 
\end{lem}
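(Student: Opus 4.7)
The plan is to show first that $i_0$ restricts to an equivalence of categories between $\sV_{\mI}$ and the full subcategory of $\LA^{op}[\sV]_{\mI_{0}}$ consisting of those $\LA$-sequences that are $\emptyset$ above level $0$, with inverse given by $p_0$. This is essentially by definition: an object $\sN$ of $\LA^{op}[\sV]_{\mI_0}$ with $\sN(\bn) = \emptyset$ for $n > 0$ is determined by the data of $X = \sN(\mathbf{0})$ together with the base map $\et\colon \mI \rtarr X$, so $\sN = i_0 X$, and conversely $p_0 \com i_0 = \id_{\sV_{\mI}}$. Morphisms likewise correspond, since a natural transformation between two such functors is determined by its component at $\mathbf{0}$.

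Next, for $X \in \sV_{\mI}$ with $\sN = i_0 X$, I will use \autoref{iodot} to identify $\sC \odot i_0 X \iso i_0 \bC X$, and iteratively $\sC \odot \sC \odot i_0 X \iso \sC \odot i_0 \bC X \iso i_0 \bC\bC X$. Under these isomorphisms, a left action $\lambda \colon \sC \odot i_0 X \rtarr i_0 X$ is the same datum as a map $i_0 \bC X \rtarr i_0 X$ in $\LA^{op}[\sV]_{\mI_0}$, which by the equivalence of the first step is the same as a map $\theta \colon \bC X \rtarr X$ in $\sV_{\mI}$. The correspondence is given explicitly in one direction by applying $p_0$ and precomposing with the inverse of $\sC \odot i_0 X \iso i_0 \bC X$, and in the other by applying $i_0$ and composing.

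I would then verify that under this correspondence the unit and associativity axioms match. For the unit axiom, the diagram $\mI_1 \odot i_0 X \xrightarrow{\et \odot \id} \sC \odot i_0 X \xrightarrow{\lambda} i_0 X$ factoring through the canonical isomorphism $\mI_1 \odot i_0 X \iso i_0 X$ translates, via $\sC \odot i_0 X \iso i_0 \bC X$ together with $\bI_1 \iso \id$ (from the proof that $\bC$ is a monad), into the condition $\theta \com \et_X = \id_X$. For associativity, the two composites $\sC \odot \sC \odot i_0 X \rightrightarrows \sC \odot i_0 X \xrightarrow{\lambda} i_0 X$ induced by $\ga \odot \id$ and $\id \odot \lambda$ translate, via the iterated isomorphism $\sC \odot \sC \odot i_0 X \iso i_0 \bC\bC X$ and the definition of the monad product $\mu \colon \bC\bC \rtarr \bC$ in the previous theorem, into exactly the monad algebra associativity square $\theta \com \mu_X = \theta \com \bC\theta$.

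The main obstacle is not any hard calculation but rather bookkeeping: one must check that the natural isomorphism of \autoref{iodot} is compatible with iterated $\odot$-products (so the two routes $\sC \odot \sC \odot i_0 X \iso i_0 \bC\bC X$ agree) and with the monoid product $\ga$ on $\sC$. This coherence is precisely the content of the fact that $\bC$ carries a monad structure induced from the monoid $\sC$, as established just before this lemma, so tracing through the identifications reduces the verification to the monad axioms already in hand.
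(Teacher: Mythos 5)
Your proposal is correct and follows essentially the same route the paper sketches in the paragraph preceding the lemma: invoke \autoref{iodot} to get $\sC \odot i_0 X \iso i_0 \bC X$, iterate it, apply $i_0$ to the monad-algebra diagrams and $p_0$ in the reverse direction, and use $p_0 \com i_0 = \id$ to see this is an isomorphism of categories. The paper leaves the coherence bookkeeping implicit; you have usefully spelled it out, but there is no difference of method.
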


\begin{rem} The books \cite{Fresse0, Fresse, Fresse1, LV} are in whole \cite{Fresse0} or in part devoted to the study of modules over operads in categories of $\SI$-sequences, with primary focus on algebraic contexts.  Much of their work can be adapted to give the analogous discussion of modules over unital operads in categories of $\LA$-sequences.  We will give one example of that  in \autoref{Nmods}.
\end{rem}

\section{A comparison of bar constructions}\label{bar}

Assume given a triple $(\sM,\sC, Y)$ consisting of a unital operad $\sC$, viewed as a monoid in 
$\LA_{\mI}^{op}[\sV]$, a right $\sC$-module $\sM$ in $\LA^{op}[\sV]_{\mI_0}$, and a left $\sC$-module $Y$ in 
$\LA^{op}[\sV]_{\mI_0}$.  The general monoidal bar construction described in the introduction gives a 
bar construction $B_{\bullet}(\sM, \sC, Y)$, which is a simplicial object in $\LA^{op}[\sV]_{\mI_0}$.

We specialize by taking $Y=i_0X$,  where $X$ is a $\sC$-algebra in 
$\sV_{\mI}$.  Here $i_0X$ is as defined in \autoref{notn3}.  \autoref{iodot} implies that $i_0X$  inherits a left $\sC$-module structure from the action of $\sC$ on $X$. 

\begin{defn}
The $q$-simplex object of 
$B_\bullet(\sM,\sC, i_0X)$ is 
\begin{equation}\label{zeroth}
B_q(\sM, \sC, i_0X) = \sM \odot \sC^{\odot  q}\odot i_0X\\
\end{equation}
The face maps 
$$d_i: B_q(\sM, \sC, i_0X) \rtarr B_{q-1}(\sM, \sC, i_0X), \ \ 0\leq i\leq q, $$ 
are induced in order by the right action $\rho\colon \sM\otimes \sC \rtarr \sM$ if $i=0$,  the product $\ga\colon \sC\odot \sC\rtarr \sC$ applied on the $i$th and $(i+1)$st  copies of $\sC$ for $1\leq i\leq q-1$, and $\tha\colon \sC\odot i_0X \rtarr i_0 X$ if $i=q$.  The degeneracy maps 
$$ s_i: B_q(\sM,\sC, i_0 X) \rtarr B_{q+1}(\sM, \sC, i_0X), \ \ 0\leq i\leq q,$$ 
insert the identity map $\mI_1 \rtarr \sC$ in the $i$th slot.    
\end{defn}

We next construct a different  two-sided bar construction
 $B_\bullet(\sM,\sC, X^{\otimes*})$, where $X$ is again a $\sC$-algebra in $\sV_{\mI}$ and relate it to the one above.  This bar construction is a simplicial object in  $\sV_{\mI}$, which is what we want in
applications.
\begin{defn}
   The $q$-simplex object of $B_\bullet(\sM,\sC, X^{\otimes *})$ is 
\begin{equation}\label{first}
B_q(\sM, \sC, X^{\otimes *}) = (\sM \odot \sC^{\odot  q})\otimes_{\Lambda} X^{\otimes *}\\
\end{equation}
Except for the last face operation $d_q$ , which is given by $\tha\colon \sC\otimes_{\LA} X^{\otimes *} \rtarr X$, the face and degeneracy operations are as defined on $B_q(\sM, \sC, i_0X)$. 
\end{defn}

Both of these are essentially equivalent to a certain monadic bar construction.  We recall from \cite{MayGeo} that monadic bar constructions are defined on triples  $(\bF,\bC,X)$, where $\bC$ is a monad in some category $\sW$, $X$ is a $\bC$-algebra in $\sW$, and $\bF\colon \sW \rtarr \sZ$ is a $\bC$-functor  in some category $\sZ$, possibly  $\sZ = \sW$.  Here $\bF$ comes with an action natural transformation $\rh\colon \bF\bC\rtarr \bF$ such that the following diagrams commute.
\[\xymatrix{
 \bF   \ar[r]^-{\bF\et} \ar[dr]_{=} & \bF\bC  \ar[d]^{\tha} \\
 & F}
 \ \ \ \ \ \ \
 \xymatrix{
 \bF\bC\bC  \ar[d]_{\rho} \ar[r]^-{\bF\mu}  & \bF\bC   \ar[d]^{\rho} \\
\bF \bC   \ar[r]_-{\rho}  &  F \\} \]
The bar construction is then a simplicial object in $\sZ$  with $q$th simplex object
\begin{equation}
B_q(\bF,\bC,X) = \bF\bC^q X.
\end{equation}
The face and degeneracy maps in $\sZ$ are defined just as for the monoidal bar construction.

We see using \autoref{equivcomp} that the right action of $\sC$ on $\sM$ induces a right action of the monad $\bC$ on the functor $\bM\colon \sV_{\mI}\rtarr \sV_{\mI}$, which we recall is given by  $\bM X = \sM\otimes_{\LA}X^{\otimes *}$.

\begin{prop}      There are natural identifications
$$ p_0B_{\bullet}(\sM,\sC,i_0X) \iso B_{\bullet} (\bM,\bC,X) \iso B_{\bullet}(\sM,\sC,X^{\otimes *})$$ 
of simplicial objects in $\sV_{\mI}$.
\end{prop}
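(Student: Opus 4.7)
The plan is to construct the two isomorphisms levelwise as iterated applications of \autoref{iodot} (for the first) and \autoref{equivcomp} (for the second), and then to verify that these levelwise isomorphisms are compatible with the faces $d_i$ and degeneracies $s_i$. The key observation is that both \autoref{iodot} and \autoref{equivcomp} are naturally induced by universal properties, so the compatibility checks reduce to diagram chases on the defining structure maps $\eta$, $\gamma$, $\rho$, and $\theta$.

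For the first isomorphism $p_0 B_\bullet(\sM,\sC,i_0X)\iso B_\bullet(\bM,\bC,X)$, I would first prove by induction on $q$ that $\sC^{\odot q}\odot i_0X\iso i_0(\bC^q X)$ in $\LA^{op}[\sV]_{\mI_0}$. The base case $q=0$ says $\mI_1\odot i_0X\iso i_0X$, which follows from $\mI_1$ being the unit of $\odot$ and $\bI_1\iso\mathrm{id}$. The inductive step is a single application of \autoref{iodot}: $\sC\odot i_0(\bC^q X)\iso i_0(\bC\bC^q X)=i_0(\bC^{q+1}X)$. Applying \autoref{iodot} once more with $\sM$ in place of $\sC$ gives $\sM\odot\sC^{\odot q}\odot i_0X\iso i_0(\bM\bC^q X)$, and applying $p_0$ then yields $\bM\bC^q X$ since $p_0\com i_0=\mathrm{id}$.

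For the second isomorphism $B_\bullet(\bM,\bC,X)\iso B_\bullet(\sM,\sC,X^{\otimes *})$, I would again induct on $q$, but using \autoref{equivcomp} at each step. Starting from the right-hand side, an iterated application of the isomorphism $(\sD\odot\sE)\otimes_\LA X^{\otimes *}\iso \sD\otimes_\LA (\bE X)^{\otimes *}$ appearing in the proof of \autoref{equivcomp} peels off one copy of $\sC$ at a time, yielding
\[ (\sM\odot\sC^{\odot q})\otimes_\LA X^{\otimes *} \iso \sM\otimes_\LA (\bC^q X)^{\otimes *} = \bM\bC^q X. \]

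The main obstacle, and the only substantive content beyond bookkeeping, is verifying that these levelwise isomorphisms commute with the face and degeneracy operators. The degeneracies $s_i$ insert the unit $\eta\colon \mI_1\rtarr\sC$ and translate under \autoref{equivcomp} into the monad unit $\eta\colon\mathrm{id}\rtarr\bC$, since the functorial assignment $\sD\mapsto\bD$ is strong monoidal on the relevant pieces. For $d_i$ with $1\le i\le q-1$, the product $\gamma\colon \sC\odot\sC\rtarr\sC$ corresponds to $\mu\colon\bC\bC\rtarr\bC$ under the same correspondence, and similarly $d_0$ corresponds to the induced right action of $\bC$ on $\bM$. The subtle case is $d_q$: in $p_0 B_q(\sM,\sC,i_0X)$ it is $p_0$ applied to $\mathrm{id}\odot\theta$, with $\theta\colon \sC\odot i_0X\rtarr i_0X$; in $B_q(\bM,\bC,X)$ it is $\bM\bC^{q-1}\theta$; and in $B_q(\sM,\sC,X^{\otimes *})$ it is induced by $\theta\colon \sC\otimes_\LA X^{\otimes *}\rtarr X$. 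To see these agree, I would check that the isomorphism $\sC\odot i_0X\iso i_0\bC X$ from \autoref{iodot} carries the algebra action $\theta$ (viewed operadically) to $i_0$ applied to the monad action, which is precisely the content of \autoref{iodot2}, and that the isomorphism $\sC\otimes_\LA X^{\otimes *}\iso\bC X$ from \autoref{DXdefn} is the same monad action on the other side. Once this is in hand, the simplicial identities on all three sides are forced to agree, completing the proof.
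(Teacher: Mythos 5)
Your proposal is correct and takes essentially the same approach as the paper: the first isomorphism is obtained by iterating \autoref{iodot} to produce $B_\bullet(\sM,\sC,i_0X)\iso i_0 B_\bullet(\bM,\bC,X)$ and then applying $p_0$, and the second by rewriting $B_q(\sM,\sC,X^{\otimes*})$ via \autoref{DXdefn} and then iterating \autoref{equivcomp}. You spell out the compatibility with face and degeneracy maps that the paper dismisses as an "evident reinterpretation," which is a useful elaboration but not a different argument.
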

\begin{proof}  For the first, apply $p_0$ to the identification
$$ B_{\bullet}(\sM,\sC,i_0X) \iso i_0B_{\bullet} (\bM,\bC,X)$$
given by  \autoref{iodot}. For the second,  observe that \autoref{first} can be written in terms of 
\autoref{DXdefn} as 
\begin{equation}\label{second}
B_q(\sM, \sC, X^{\otimes *}) = (\bM \odot \bC^{\odot  q})(X).\\
\end{equation}
Then inductive use of \autoref{equivcomp} gives a natural isomorphism 
\begin{equation}\label{third}
B_q(\sM, \sC, X^{\otimes *}) \iso  \bM\bC^qX,
\end{equation}
with an evident reinterpretation of the face and degeneracy maps.
\end{proof}

\begin{rem}\label{intex}  The interesting examples of $\bC$-functors $F$ in the classical examples are not of the form $\bM$ for right $\sC$-modules $\sM$,.
\end{rem}

\section{Endomorphism operads and homomorphism monoids}\label{VWcon}
Recall that if $\sV$ is closed with internal hom objects denoted $\ul{\sV}(-,-)$ and $X\in \sV$,  there is an endomorphism operad $\End(X)$ in $\sV$ with $k$th term  $\ul{\sV}(X^{\otimes k},X)$.  Its unit map is $\id\colon \mI\rtarr \ul{\sV}(X,X)$ and its structural maps $\ga$ are composites 
\begin{equation}\label{Endogamma}
\xymatrix{\ul{\sV}(X^{\otimes k}, X) \otimes \ul{\sV}(X^{\otimes j_1}, X)\otimes \cdots \otimes \ul{\sV}(X^{\otimes j_k}, X)\ar[d]^{\id \otimes\otimes^k}\\
\ul{\sV}(X^{\otimes k}, X) \otimes \ul{\sV}(X^{\otimes j}, X^{\otimes k}) \ar[d]^{\com}\\
\ul{\sV}(X^{\otimes j}, X)\\}
\end{equation}
of $\otimes$ and composition, where $j=j_1 + \cdots + j_k$.  

Using the natural isomorphisms
\begin{equation}\label{hom}
\ul{\sV}(\sC(k)\otimes X^{\otimes k},  X) \iso \ul{\sV}(\sC(k),\ul{\sV}(X^{\otimes k}, X)),
\end{equation}
an action of an operad $\sC$ on $X$ can be identified with a morphism $\sC \rtarr \End(X)$ of operads  in $\sV$.

We generalize this to categories $\sW$ enriched over $\sV$. 
In this section and the next, we let $(\sW,\otimes,\mI_{\sW})$ be a symmetric monoidal category enriched in a (not necessarily closed) symmetric monoidal category $(\sV, \otimes, \mI)$.   Note that we do not distinguish notationally between $\otimes = \otimes_{\sW}$ and $\otimes = \otimes_{\sV}$, relying on context to indicate which is meant.  We write $\ul{\sW}(A,B)\in \sV$ for the morphism objects of $\sW$.  

\begin{lem}\label{relend}
Let $Y\in \sW$.  There is an endomorphism operad $\End_{\sW}Y$ in $\sV$ with $k$th term $\ul{\sW}(Y^{\otimes k},Y)$.
\end{lem}
\begin{proof}  Replacing $X$ by $Y$ and  $\ul{\sV}$ by $\ul{\sW}$ in \autoref{Endogamma}  gives the structural maps.
\end{proof} 

Recall that $\sW$ is tensored over $\sV$ if there is a functor $\otimes\colon \sV\times \sW\rtarr \sW$ with an adjunction
$$  \sW(X\otimes Y, Z) \iso \sV(X, \ul{\sW}(Y, Z)), $$
where $X\in \sV$ and $Y,Z\in \sW$.   For an operad $\sC$ in $\sV$ and $Y\in \sW$, it follows that
\begin{equation}\label{hom2}
\sW(\sC(k) \otimes Y^{\otimes k}, Y) \iso \sV(\sC(k),\ul{\sW}(Y^{\otimes k}, Y))
\end{equation}
Here we define an action of $\sC$ on an object $Y\in \sW$ to be a set of maps in $\sW$
$$\sC(k) \otimes Y^{\otimes k} \rtarr Y$$
satisfying the same unity, associativity and equivariance conditions that hold when $\sV= \sW$.   Then \autoref{hom2}  gives the following conclusion.

\begin{prop}  If $\sW$ is enriched and tensored over $\sV$ and $\sC$ is an operad in $\sV$, then $\sC$-algebras $Y$ in $\sW$ correspond bijectively to  maps $\sC\rtarr \End_{\sW}(Y)$ of operads in $\sV$. 
\end{prop}

\begin{rem}  Good point-set level categories of spectra are enriched and tensored over the category of (unbased) spaces.  Intuitively, and rigorously in most models. such as those of \cite{LMS, EKMM, MM, MMSS}, for a space $X$ and spectrum $Y$, $X\otimes Y = \SI^{\infty}(X_+) \sma Y$.  Actions on spectra of operads of spaces have been used for decades, but the relationship with endomorphism operads seems to be new.
\end{rem}

However, in factorization homology, we will encounter a pair $(\sW,\sV)$ such that $\sW$ is enriched but not tensored over $\sV$.   We still have endomorphism operads $\End_{\sW}Y$, but we are interested in unital operads.  Since $Y^{\otimes 0} = \mI_{\sW}$, $\End_{\sW}(Y)$ is only unital when $\ul{\sW}(\mI_{\sW}, Y) = \mI$.  That leads us to the following construction.

\begin{con}
  \label{con:HW}
  Assume that $\sW$ is $\sV$-enriched symmetric monoidal. This means that
    the symmetric monoidal product $\otimes:\sW \times \sW \rtarr \sW$ induces a
    $\sV$-enriched functor $\sW \otimes \sW \to \sW$, where the tensor product of
    $\sV$-enriched categories is as defined in \cite[p12]{KellyBasic}. More
    precisely, there are (suitably coherent) natural maps
\begin{equation*}
\ul{\sW}(X, Y) \otimes \ul{\sW}(X', Y') \to \ul{\sW}(X \otimes X', Y \otimes Y'),
\end{equation*} 
where $\otimes = \otimes_{\sV}$ on the left and $\otimes = \otimes_{\sW}$ on the right.

Assume further that $\ul{\sW}(\mI_{\sW} , Y) \cong \mI$ for all $Y \in \sW$.  
We construct a  category $\Hom_{\sW}$ enriched in 
$\LA_{\mI}^{op}[\sV]$ such that the objects are those of $\sW$ and, for $X,Y \in\sW$, the enriched morphism object is the functor $\LA^{op}\rtarr \sV$ defined by
$$\ul{\Hom}_{\sW}(X,Y) = \ul{\sW} (X^{\otimes *}, Y).$$
Since $X^{\otimes 0} = \mI_{\sW} $, our assumption gives that
$$ \ul{\Hom}_{\sW}(X,Y)(0) = \ul{\sW}(\mI_{\sW} , Y) \cong \mI.$$ 
(Thus $\Hom_{\sW}(X,Y)=\LA_{\mI}^{op}[\sV](\mI_0, \ul{Hom}_{\sW}(X,Y))$ is a point for all $X$ and $Y$.)
This verifies that $\ul{\Hom}_\sW(X,Y)$ is an object of $\LA_{\mI}^{op}[\sV]$.  Observe that, as $\LA$-sequences,                    
$$  \ul{\Hom}_{\sW}(Y,Y) = \End_{\sW}(Y).$$
\end{con}

The assumption on $\sW$ may well seem strange.  Without it, we could work in the category $\LA^{op}[\sV]_{\mI_0}$ of based rather than just unital $\LA$-sequences. We shall see an important topological example where the assumption holds in \autoref{fact}.  With it, we now show that our unital endomorphism operads $\End_{\sW}Y$ can be viewed as monoids in a monoidal category  enriched in $\LA_{\mI}^{op}[\sV]$. 

\begin{prop}\label{HCat}
 $\ul{\Hom}_{\sW}$ admits a composition that gives $\sW$ the structure of a category enriched in $\LA_{\mI}^{op}[\sV]$.
\end{prop}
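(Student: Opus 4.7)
To build the enrichment, I will define composition morphisms
$$c_{A,B,C}\colon \ul{\mathcal{H}}_\sW(B,C) \odot \ul{\mathcal{H}}_\sW(A,B) \rtarr \ul{\mathcal{H}}_\sW(A,C)$$
in $\LA^{op}_{\mI}[\sV]$ together with unit maps $\io_A\colon \mI_1 \rtarr \ul{\mathcal{H}}_\sW(A,A)$, and then verify the associativity and unit axioms. Mimicking the construction of the endomorphism operad, for each $\bk$ and each $(j_1,\ldots,j_k)$ with $j_1 + \cdots + j_k = n$, I will define the $(j_1,\ldots,j_k)$-summand of $c_{A,B,C}$ at level $\bn$ as the composite
$$\ul{\sW}(B^{\otimes k}, C) \otimes \ul{\sW}(A^{\otimes j_1}, B) \otimes \cdots \otimes \ul{\sW}(A^{\otimes j_k}, B) \rtarr \ul{\sW}(B^{\otimes k}, C) \otimes \ul{\sW}(A^{\otimes n}, B^{\otimes k}) \rtarr \ul{\sW}(A^{\otimes n}, C)$$
of the $k$-fold enriched tensor functor for $\sW$ followed by enriched composition. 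The unit $\io_A$ at level $\mathbf{1}$ is the name of $\id_A \in \ul{\sW}(A,A)$, and at level $\mathbf{0}$ is the identity $\mI \rtarr \mI$.

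The next step is to check that these summands assemble into a well-defined map out of $(\ul{\mathcal{H}}_\sW(B,C) \odot \ul{\mathcal{H}}_\sW(A,B))(\bn)$, which by \autoref{KellyLA} and \autoref{loxlox} is a quotient of $\int^{\bk} \ul{\sW}(B^{\otimes k},C) \otimes \ul{\mathcal{H}}_\sW(A,B)^{\boxtimes_{\SI} k}(\bn)$. The $\SI_k$-equivariance under block permutations and the internal $\SI_{j_i}$-equivariance under permuting tensor factors of $A^{\otimes j_i}$ both follow from the symmetric structure on $\sW$ exactly as in the classical construction of the endomorphism operad. The genuinely new ingredient is compatibility with the injection morphisms $\si_i\colon \bk \rtarr \mathbf{k+1}$ in $\LA$, which, as explained at the end of \autoref{comb}, act covariantly on $\ul{\mathcal{H}}_\sW(A,B)^{\boxtimes *}$ by inserting the unit $\mI \rtarr \ul{\mathcal{H}}_\sW(A,B)(\mathbf{0})$ in slot $i$. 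The coend identification demands that precomposing $f\colon B^{\otimes k+1}\rtarr C$ with the insertion of $\mJ\rtarr B$ in slot $i$ before composing with $k$ factors agrees with composing $f$ directly with $k+1$ factors, the $i$th being the distinguished element of $\ul{\sW}(\mJ,B) \cong \mI$. This equality is exactly what the hypothesis $\ul{\sW}(\mJ,B)\cong \mI$ provides, since it identifies the unit point of the $\LA$-sequence $\ul{\mathcal{H}}_\sW(A,B)$ at level $\mathbf{0}$ with the canonical morphism $\mJ\rtarr B$ in $\sW$. Functoriality in $\bn$ is automatic since the contravariant $\LA$-action on $\ul{\mathcal{H}}_\sW(A,C)$ is induced by the covariant $\LA$-action on $A^{\otimes *}$, and the construction only touches the target side.

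Once $c_{A,B,C}$ is known to be a well-defined morphism in $\LA^{op}_{\mI}[\sV]$, the associativity pentagon for
$$\ul{\mathcal{H}}_\sW(C,D) \odot \ul{\mathcal{H}}_\sW(B,C)\odot \ul{\mathcal{H}}_\sW(A,B)\rtarr \ul{\mathcal{H}}_\sW(A,D)$$
and the left and right unit triangles reduce, at each level $\bn$ and each decomposition of $\bn$, to the associativity and unit coherences for $\otimes$ and enriched composition in $\sW$, which hold by hypothesis. The main obstacle is the middle step: carefully verifying that the piecewise definition descends through the coend identifications defining $\odot$, and in particular respects the injection relations, since this is the one place where the specifically unital $\LA$-sequence structure (as opposed to a bare $\SI$-sequence structure) genuinely enters and where the hypothesis $\ul{\sW}(\mJ,B)\cong \mI$ is indispensable.
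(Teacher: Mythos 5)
Your construction is the same as the paper's: define composition summand-wise by tensoring $k$ morphism objects into $\ul{\sW}(A^{\otimes n},B^{\otimes k})$ and then composing, take the name of $\id_A$ for the unit, and invoke \autoref{coYon} at the end. The paper works directly at the level of coends and dismisses the descent check as ``easy diagram chasing,'' whereas you spell out the injection compatibility that makes the map descend through the $\LA$-coend and correctly identify this as the precise place where the hypothesis $\ul{\sW}(\mJ,B)\cong\mI$ enters---a helpful elaboration but not a different proof.
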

\begin{proof}
For an object $X\in \sW$, the identity map $\mI \rtarr  \ul{\sW}(X,X)$ of the enriched category $\sW$ induces an identity map $\id: \mI_1 \rtarr 
\ul{\Hom}_{\sW}(X,X) =  \ul{\sW}(X^{\otimes *}, X)$. 
Note that $\id$ is a morphism in $\LA^{op}_\mI[\sV]$, that is, a natural transformation of functors. 
Via the following composites, the product map relating $\otimes$ of $\sV$ to $\otimes$ of $\sW$ followed by composition
$$  \ul{\sW}(Y, Z) \otimes  \ul{\sW}(X, Y) \rtarr  \ul{\sW}(X, Z)$$
 induce the required composition morphisms in $\LA^{op}_\mI[\sV]$:
\begin{equation*}
  \begin{tikzcd}
\ul{\Hom}_{\sW}(Y,Z) \odot \ul{\Hom}_{\sW}(X,Y)  \ar[d, "="] \\
  \int^{\bk, \bj_1, \ldots, \bj_k} \ul{\sW}(Y^{\otimes k}, Z) \otimes \ul{\sW}(X^{\otimes j_1},
     Y) \otimes \cdots \otimes \ul{\sW}(X^{\otimes j_k}, Y)  \otimes \LA(-, j_1 + \ldots + j_k) \ar[d] \\
     \int^{\bk, \bj_1, \ldots, \bj_k} \ul{\sW}(Y^{\otimes k}, Z) \otimes  \ul{\sW}(X^{\otimes
        (j_1+\cdots+j_k)}, Y^{\otimes k}) \otimes \LA(-, j_1 + \ldots + j_k)  \ar[d] \\
      \int^{\bj_1, \ldots, \bj_k}    \ul{\sW}(X^{\otimes (j_1+\cdots+j_k)}, Z) \otimes \LA(-, j_1 + \ldots + j_k)\ar[d, "\iso"] \\
      \ul{\sW}(X^{\otimes *}, Z)\ar[d, "="] \\
 \ul{\Hom}_{\sW}(X,Z) 
  \end{tikzcd}
\end{equation*}
 The isomorphism is another application of \autoref{coYon}. The category axioms are verified by easy diagram chasing. 
\end{proof}

\begin{cor}\label{Hoperad}
Let $X$, $Y$, $Z$ be objects of $\Hom_\sW$.  Then $\ul{\Hom}_\sW(Y,Y)$ is a monoid in the enriching category $\LA^{op}_\mI[\sV]$ with right module $\ul{\Hom}_\sW(Y,Z)$ and left module $\ul{\Hom}_\sW(X,Y)$.  Thought of operadically, $\ul{\Hom}_{\sW}(Y,Y)$ is the unital endomorphism operad $\End_{\sW}(Y)$ in $\sV$.
\end{cor}
\begin{proof}
The product map and the left and right actions of the monoid are given by composition in $\ul{\Hom}_{\sW}$; the unit is given by 
$\id: \mI_1 \rtarr \ul{\Hom}_\sW(Y,Y)$. The required commutative diagrams are immediate from the category axioms.
\end{proof}

\section{Factorization homology and nonabelian Poincar\'e duality}\label{fact}

Here we shall give a quick sketch of the geometric definition of equivariant factorization homology and describe nonabelian Poincar\'e duality.   Full topological details are in the third author's thesis \cite{Foling}, but with less categorical contextualization.   

We fix a finite group $G$ and take $\sV$ in  \autoref{VWcon}  to be the category $G\sU$ of (unbased) $G$-spaces and $G$-maps.  It is enriched in itself, with $\ul{G\sU}(X,Y)$ being the $G$-space of maps $X\rtarr Y$, with $G$ acting by conjugation.  We shall shortly also use the category $G\sT$ of (based) $G$-spaces and based $G$-maps; $\ul{G\sT}(X,Y)$ is the based $G$-space of based maps $X\rtarr Y$, with $G$ acting by conjugation. 

We also  fix a finite dimensional $G$-representation $V$.   We shall generally omit $G$ and $V$ from the notations.  We  take $\sW$ in \autoref{VWcon}  to be the category $\sE{mb}^{\fr}$ of $V$-framed $G$-manifolds and $V$-framed embeddings.   Here, a $G$-manifold is $V$-framed if we are given an isomorphism $\mathrm{T}M \cong M \times V$ of equivariant vector bundles; an embedding $f: L \rtarr  M$ of $V$-framed $G$-manifolds is $V$-framed if we are given a path in the $V$-framing space of $L$ from its given $V$-framing to the one induced by $f$ and $M$. We then have the $G$-space 
$\sE{mb}^{\fr}(L,M)$ of $V$-framed embeddings, giving the enrichment of $\sE{mb}^{\fr}$ in $G\sU$.  Note, however, that $\sE{mb}^{\fr}$ is not tensored over $G\sU$ since a product of a $G$-space and a $G$-manifold is not a $G$-manifold.

We give $\sE{mb}^{\fr}$ a symmetric monoidal structure by taking  its product to be disjoint union (which
is not the categorical coproduct here) and taking its unit to be the empty manifold.  
With these definitions, we have placed ourselves in exactly the context discussed in \autoref{VWcon}.
Since there is a unique map from the empty manifold to any manifold, the assumption of \autoref{con:HW} is satisfied. Therefore 
\autoref{HCat} and \autoref{Hoperad} apply. 

The representation $V$, considered as a $G$-manifold, is canonically $V$-framed and $\sE{mb}^{\fr}(V,M) \simeq M$.  We rename the functor $\ul{\Hom}_{\sE{mb}^{\fr}}(V,M)$  of \autoref{con:HW}, calling it $\Mfr$. Explicitly,  for a $V$-framed manifold $M$, the $\LA$-sequence
$$\Mfr \colon \LA^{op} \rtarr G\sU$$
is given by
$$ \Mfr(\bk) = \sE{mb}^{\fr}(^kV, M). $$
Here $^kV$ denotes the disjoint union of $k$ copies of $V$; $\SI_k$ acts by permuting  these copies, 
and an injection $\si_i\colon \bk \rtarr \mathbf{k+1}$ induces the map obtained by
restricting embeddings of $k+1$ copies of $V$ to the $k$ copies obtained by omitting the $i$th one. 

We obtain a functor 
$$\bD_M^{\mathrm{fr}}\colon \ul{G\sT} \rtarr \ul{G\sT}$$ 
by applying \autoref{DXdefn} to the functor $\Mfr$.  Thus $\bD_M^{\mathrm{fr}}X = \Mfr \otimes_{\LA} X^{\otimes *}$.

By specialization of  \autoref{Hoperad}, ${\Vfr}$ is an operad in $G\sU$.  It is called the framed little $V$-discs operad. Its $k$-th $G$-space is  $\sE{mb}^{\fr}(^kV, V)$.   There is an inclusion of the classical little discs operad $\sD_V$ in  ${\Vfr}$, and it is a levelwise equivalence of $G$-operads \cite{Foling}. We have the monad $\Vbfr$ associated to $\Vfr$, and it acts (from the right) on the functor  $\bD_M^{\mathrm{fr}}$ for any $V$-framed manifold $M$.  It also acts on the $V$th suspension functor $\SI^V$.

The geometric definition of factorization homology reads as follows.  Here we do pass to geometric realization from our simplicial bar constructions.

\begin{defn}   The equivariant factorization homology $\int^{\fr}_M A$ of a $V$-framed $G$-manifold $M$ with coefficients in a ${\Vfr}$-algebra $A$ is defined to be the two-sided monadic bar construction
$$ \int^{\fr}_M A = B(\bD_M^{\mathrm{fr}}, \bD_V^{\mathrm{fr}}, A). $$
Using \autoref{bar}, we can reinterpret this bar construction as $B(\Mfr,\Vfr,A^*)$.  
\end{defn}  

Define $E_V A = B(\SI^V, \bD_V^{\mathrm{fr}}, A)$.  This is a version of the $V$-fold delooping functor defined nonequivariantly in \cite{MayGeo} and equivariantly in \cite{GM2}.  By a result called the recognition principle, when $A$ is $G$-connected and $\bR\subset V$, $A$ is weak $G$-equivalent  as a $\Vfr$-algebra  to
$\OM^V E_VA$. We summarize the argument in the language here to help motivate the equivariant Poincar\'e duality theorem.  There is a natural map of $\Vfr$-algebras $B(\bD_V^{\mathrm{fr}}, \bD_V^{\mathrm{fr}}, A) \rtarr A$ which is a $G$-homotopy equivalence, so we are focusing on the case $M=V$ of factorization homology.   

There is a natural map of $\Vfr$-algebras $\al\colon \bD_V^{\mathrm{fr}} A \rtarr \OM^V\SI^VA$.  Part of a result called the approximation theorem says that $\al$ is a weak $G$-equivalence when $A$ is $G$-connected.   It then induces a weak $G$-equivalence of $\Vfr$-algebras
\begin{equation}\label{alpha}
 \al_*\colon B(\bD_V^{\mathrm{fr}}, \bD_V^{\mathrm{fr}}, A) \rtarr B(\OM^V\SI^V, \bD_V^{\mathrm{fr}}, A).
 \end{equation}
For a simplicial based $G$-space $K_{\bullet}$ and a based $G$-space $X$, there is a composite of a  natural isomorphism and an evaluation map 
$$  X\sma |\ul{G\sT}(X,K_{\bullet})| \iso | X\sma  \ul{G\sT}(X,K_{\bullet})| \rtarr |K|. $$
Its adjoint is a natural $G$-map 
$$ \ze\colon |\ul{G\sT}(X,K_{\bullet})| \rtarr \ul{G\sT}(X, |K_{\bullet}|).$$
By  \cite{Haus} or \cite[pp 495-496]{CW}, there are explicit verifiable conditions under which $\ze$ is a weak $G$-equivalence.  Taking $X= S^V$ and 
$K$ to be our simplicial bar construction, $\ze$ gives a weak $G$-equivalence of $\Vfr$-algebras
\begin{equation}\label{zeta}
 \ze\colon B(\OM^V\SI^V, \bD_V^{\mathrm{fr}}, A) \rtarr \OM^V B(\SI^V, \bD_V^{\mathrm{fr}}, A) = \OM^V E_VA.
 \end{equation}
 
These weak $G$-equivalences encode the recognition principle, and we ask for their analogs for a general $V$-framed $G$-manifold $M$.  Let $M^+$ denote the one-point compactification of $M$, which is based at the point at infinity. Thus $V^+ = S^V$ and $M^+$ is the disjoint union of $M$ and a $G$-fixed basepoint if $M$ is compact.   Equivariant Poincar\'e duality is a weak $G$- equivalence between $ \int^{\fr}_M A$ and the function $G$-space $\ul{G\sT}(M^+,E_VA )$.   By definition,  $\OM^V\SI^V = \ul{G\sT}(S^V, S^V) $.   Replacing $S^V$ by $M^+$ and arguing as above, we obtain the following  analog of \autoref{zeta}. 
\begin{equation}\label{zeta2}
 \ze\colon B( \ul{G\sT}(M^+, \SI^V ), \bD_V^{\mathrm{fr}}, A) \rtarr  \ul{G\sT}(M^+,B(\SI^V,\bD_V^{\mathrm{fr}}, A)) = \ul{G\sT}(M^+,E_VA).
 \end{equation}
 We seek an analog of $\al$.  That will be given by a ``scanning map", which is a natural transformation 
 $$\PH\colon \bD_M^{\mathrm{fr}} \rtarr  \ul{G\sT}(M^+, \SI^V ).$$
 In contrast with  $\al$, the definition of $\PH$ is not at all obvious and is deferred to the next section.  It induces
 \begin{equation}\label{PHI*}
 \PH_*\colon  \int^{\fr}_M A = B(\bD_M^{\mathrm{fr}}, \bD_V^{\mathrm{fr}}, A) \rtarr  B(\ul{G\sT}(M^+, \SI^V ), \bD_V^{\mathrm{fr}}, A).
 \end{equation}
 
\begin{thm}[Equivariant nonabelian Poincar\'e duality]\label{nonAb}  If $A$ is $G$-connected, then $\ze$ and $\PH_*$ are natural weak equivalences,  hence so is their composite
$$\ze\com \PH_*\colon  \int^{\fr}_M A \rtarr  \ul{G\sT}(M^+,E_VA).$$
\end{thm}

The point is that it is very hard to compute invariants of function $G$-spaces,whereas bar constructions have proven to be very amenable to calculation nonequivariantly;  it is hoped and expected that they will eventually also be so equivariantly.

\section{The definition of the scanning map $\PH$}\label{SCAN}

The use of $\LA$-sequences clarifies and simplifies the definition of $\PH$.  For a $V$-framed $G$-manifold $M$,  we have the functor 
$$\FM \colon \LA^{op} \rtarr G\sT$$ 
given by
$$ \FM(\bk) = \ul{G\sT}(M^+, {^k}S^V). $$
Here $^kS^V$ denotes the wedge of $k$ copies of $S^V$ and is identified with $(^kV)^+$. The right actions of the groups $\SI_k$ give the symmetric sequence.  For an ordered injection  $\si_i\colon  \mathbf{k} \rtarr \mathbf{k+1}$, the action of 
$\si_i$ is induced by the map ${^{k+1}}S^V \rtarr {^k}S^V$ that sends the $i$th wedge summand to the basepoint.  In particular, if $M$ is a point,
then $M_+ = S^0$ and $\FM$ is just  $^{*}\!S^V$.   With this definition in place, we can define the scanning  map
$\PH\colon  {\Mbfr}X\rtarr \ul{G\sT}(M^+,\SI^VX)$ for a $G$-space $X$.

\begin{defn}\label{scan}
For a based $G$-space $X$, $\PH$ is the following composite:
$$   \xymatrix{  
\bD_M^{\mathrm{fr}} X \ar[d]^{=} \\
  \Mfr \otimes_{\LA}X^* \ar[d]^{\pi_M\otimes_{\LA} \id}  \\
     \ul{G\sT}(M^+, \,^*S^V) \otimes_{\LA}  X^* \ar[d] ^{\ph} \\
     \ul{G\sT}(M^+,\,  ^*S^V \otimes_{\LA} X^*)  \ar[d]^{\ul{G\sT}(\id, \rh)}   \\
     \ul{G\sT}(M^+, \SI^V X)\\}  $$
\end{defn}

We shall define $\pi_M$, $\rh$, and $\ph$ in the following three subsections and
we shall prove the following result in the fourth.  The first three include a
few details that are needed in the fourth.  We shall see that $\pi_M$ is a
version of the classical Pontryagin-Thom map,  while $\rh$ and $\ph$ are
specializations of elementary general formal constructions with
$\LA$-sequences.

\begin{thm}\label{Dfunctors} The monad $\Vbfr$ associated to $\Vfr$ acts (from the right)
    on all functors displayed in \autoref{scan}, and $\pi_M\otimes_{\LA}\id$, $\ph$, and $\ul{G\sT}^{op}(\rh, \id)$  are maps of $\bD_V^{\mathrm{fr}}$-functors. 
\end{thm}

The following consequence is not easily seen by direct inspection of definitions.

\begin{cor} When $M = V$, and thus $M^+ = S^V$,  the scanning map $\PH$ coincides with the map $\al$.
\end{cor}
\begin{proof}  This is immediate from the fact that $\bD_V^{\mathrm{fr}}$ is the free $\bD_V^{\mathrm{fr}}$-algebra functor and $\PH$ and $ \al$ are  maps of $\bD_M^{\mathrm{fr}}$-algebras that agree when composed with the unit $\et\colon X \rtarr \bD_V^{\mathrm{fr}}X$.  Both composites are easily checked to be the unit  $X\rtarr \OM^V\SI^V X$ of the adjunction $(\SI^V,\OM^V)$.
\end{proof}

This will complete the details needed to make sense of \autoref{nonAb}. The intuitive idea of its plausibility is
that a mild generalization of the approximation theorem applies locally, so that
equivariant Poincar\'e duality can be seen as a globalization of the
approximation theorem.  The manifold $M$ is the union of orbits $G\times_H U$ where
$U$ is an $H$-contractible Euclidean subspace.  The map $\PH$  for  $G\times_H U$ is
close enough to $\al$ that the method of proof of the approximation theorem
gives that it is a weak $G$-equivalence.  Then the standard result that a local
weak $G$-equivalence is a weak $G$-equivalence applies.  Detailed proofs are given in 
 \cite{HHKWZ} and \cite{Foling}.                                                                                                                                                       

\subsection{The Pontryagin-Thom map $\pi_M$}\label{Vframe}

We use the classical Pontryagin-Thom construction to construct a natural transformation 
$$\pi_M\colon \Mfr \rtarr \FM$$ 
of functors $\LA^{op} \rtarr G\sU$,
where we implicitly compose $\FM$ with the forgetful functor $G\sT\rtarr G\sU$.
Recall that the classical Pontryagin-Thom construction 
$$p \colon \sE{mb}(L,M) \rtarr \ul{G\sT}(M^+,L^+) = \ul{G\sT}^{op}(L^+,M^+)$$
sends an (open) embedding  $f\colon L\rtarr M$ of manifolds to the map $p(f)$, where $p(f)$ sends all points 
of $M$ not in the image of $f$, including the point at infinity of $M$, to the point at infinity of $L$ and sends $f(L)$ to $L$ via $f^{-1}$. 
For $V$-framed $G$-manifolds $L$ and $M$, we continue to denote the composite map
$$ \sE{mb}^{\fr}(L,M) \rtarr \sE{mb}(L,M) \rtarr \ul{G\sT}(M^+,L^+) =\ul{G\sT}^{op}(L^+,M^+)$$
by $p$, where the first map forgets the framing data (see \cite{Foling}) and the second is the classical Pontryagin-Thom construction defined above. Taking $L =\, ^{k}\!V$ as $k$ varies, inspection of definitions shows that $p$ is natural, and we denote $p$ by $\pi_M$. 

Again by inspection, we have the following commutative diagram
  \begin{equation}\label{pcomp}
  \xymatrix{  
  \sE{mb}^{\fr}(L,M) \times \sE{mb}^{\fr}(K,L) \ar[d]_{p \times p}  \ar[r]^-{\com} & \sE{mb}^{\fr}(K,M) \ar[d]^{p}  \\
 \ul{G\sT}^{op}(L^+,M^+) \times \ul{G\sT}^{op}(K^{+},L^{+}) \ar[r]_-{\com} &  \ul{G\sT}^{op}(K^+,M^+) \\ } 
 \end{equation}
 for $V$-framed manifolds $K$, $L$, and $M$.  The diagram explains why we shall often think about $p$ as taking values in the opposite category of $\ul{G\sT}$ from now on.  The following remark expands on this.
  
\begin{rem}\label{trans} In the language of \autoref{con:HW}, let $\sW' = (\ul{G\sT}^{op}, \vee, *)$, which is enriched in $G\sU$.  Then 
$$\FM = \ul{G\sT}^{op}({}^*S^V,M^+) = \ul{\Hom}_{\sW'}(S^V,M^+) \in \Lambda^{op}_{*}[G\sU].$$
Thus one-point compactification on objects and $\pi$ on morphisms  gives a functor $(-)^+:\sW \rtarr \sW'$ and
$\pi$ is a transformation $\ul{\Hom}_{\sW}(V,M) \rtarr \ul{\Hom}_{\sW'}(S^V,M^+)$.   Comparing composition  in $\sW$ and in $\sW'$, as defined in \autoref{HCat}, we obtain the following commutative diagram
\begin{equation}\label{WWprime}
\xymatrix{
\sE{mb}^{\fr}({}^*V,M) \odot \sE{mb}^{\fr}({}^*V,V)    \ar[r]^-{\com} \ar[d]_{\pi_M\odot \pi_V} & \sE{mb}^{\fr}({}^*V, M) \ar[d]^{\pi_M} \\
 \ul{G\sT}^{op}({}^*S^V,M^+) \odot  \ul{G\sT}^{op}({}^*S^V,S^V) \ar[r]_-{\com}  &   \ul{G\sT}^{op}({}^*S^V,M^+).\\}
 \end{equation}
\end{rem}

\subsection{The natural map $\rh$}

The maps $\rh$ and $\si$ are elementary and general, not specific to the case $\sV = G\sU$ of present interest.  We use topological notations, but the reader should keep the generality in mind, thinking of a general cartesian monoidal category $\sV$.  Given $X\in G\sT$, we have the usual functor  $X^*\colon \LA \rtarr G\sT$ given by the cartesian powers of $X$ with their permutations and with injections
sent to basepoint inclusions as in \autoref{Xstar}.  We also have a functor  $^{*\!}X\colon
\LA^{op}\rtarr \sV_*$ given by the ``copowers'' or wedges of copies of $X$ with their permutations of wedge summands and with injections sent to ``coprojections'' that send copies of $X$ not in the image of an injection to the basepoint, with the other summands sent by the identity to the corresponding wedge summands.   We then have the following observation.

\begin{lem}\label{note1}  For $X,Y\in G\sT$, there is a natural map 
$$\rh\colon ^{*}Y\otimes_{\LA} X^{*} \rtarr Y\sma X.$$
\end{lem}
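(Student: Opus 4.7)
The plan is to construct $\rh$ by appealing to the universal property of the coend $\int^{\bn \in \LA} {^{*}}Y(\bn) \otimes X^{*}(\bn)$.  At each level $n \geq 1$, I define
\[ \rh_n \colon Y^{\vee n} \times X^{\times n} \rtarr Y \sma X \]
by the assignment $((y, i), (x_1, \ldots, x_n)) \mapsto y \sma x_i$, where $(y, i)$ denotes the point $y$ located in the $i^{\text{th}}$ wedge summand.  Equivalently, $\rh_n$ is the composite of the canonical map $Y^{\vee n} \times X^{\times n} \rtarr (Y \sma X)^{\vee n}$ (which is well-defined because $y \sma \ast = \ast \sma x = \ast$) followed by the fold map $(Y \sma X)^{\vee n} \rtarr Y \sma X$.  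At level $n = 0$ both sides reduce to the basepoint, and $\rh_0$ is forced.  Each $\rh_n$ is manifestly based.

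Next I verify that the collection $\{\rh_n\}$ coequalizes the defining relations of the coend, that is, $\rh_m(\la^{*} u, v) = \rh_n(u, \la_{*} v)$ for every $\la \colon \bm \rtarr \bn$ in $\LA$.  Since $\LA$ is generated by the permutations and the ordered injections $\si_i\colon \mathbf{n-1} \rtarr \bn$, these two cases suffice.  Permutations act diagonally, permuting the wedge summands of ${^{*}}Y$ in parallel with the factors of $X^{*}$, so the expression $y \sma x_i$ is invariant under simultaneous relabeling.  For $\si_i$, the covariant action $(\si_i)_{*}$ on $X^{*}$ inserts the basepoint of $X$ at position $i$, while the contravariant action $\si_i^{*}$ on ${^{*}}Y$ collapses the $i^{\text{th}}$ wedge summand.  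A case split on whether the chosen wedge index $j$ equals $i$ shows that both routes give the same value: when $j \neq i$, both produce $y \sma x_{j'}$ with $j'$ the evident reindexed position; when $j = i$, both produce the basepoint, in one case because the wedge summand is collapsed and in the other because the corresponding $X$-factor is replaced by $\ast$ and $y \sma \ast = \ast$.

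Naturality in $X$ and $Y$ then follows immediately, since the smash-product quotient and the fold map are both natural in their based inputs.  The one substantive step is the compatibility check under the ordered injections $\si_i$, and it rests on the simple observation that inserting a basepoint factor in $X^{*}$ and collapsing a wedge summand in ${^{*}}Y$ produce cancelling base-point substitutions inside the formula $y \sma x_j$.  No further obstacle arises, so the induced map $\rh$ on the coend exists, is unique, and is based and natural.
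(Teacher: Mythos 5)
Your proof is correct and takes essentially the same approach as the paper's: you define the same level-$n$ map $\rh_n$ (sending $y$ in the $i$th wedge summand paired with $(x_1,\ldots,x_n)$ to $y\sma x_i$, equivalently the composite of a projection with the smash quotient as in the paper's $q\com(\id\times\pi_i)$), and you verify the coend relations by treating permutations and the ordered injections $\si_i$ separately, which is exactly the structure of the paper's argument.
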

\begin{proof}
We can identify $^k Y\times X^k$ with  the wedge of $k$-copies of  $Y\times X^k$.  These copies are ordered in the natural way, as are the coordinates of  $X^k$.   Define 
$$\rh_k\colon ^k Y\times X^k \rtarr Y\sma X$$ by letting its restriction  to the $i$th copy of $Y\times X^k$ be the composite  
$$ \xymatrix  {Y\times X^k  \ar[r]^-{\id\times \pi_i}  & Y\times  X \ar[r]^-{q} &  Y\sma X\\} $$
of the $i$-th projection and the quotient map.  
By transporting the right action of $\SI_k$ to the corresponding left action, we can view the action of $\SI_k$ on $^k Y\times X^k$ as diagonal, and then it is clear that $\rh_k$ is equivariant, where $\SI_k$ acts trivially on $Y\sma X$.  Consider the ordered injection $\si_i\colon \bk \rtarr \mathbf{k+1}$, $1\leq i\leq k+1$ that skips $i$ in the target.  We claim that the following diagram commutes.
$$\xymatrix{   ^{k+1}Y\times X^k  \ar[d]_{\si_i^{*}\times \id} \ar[r]^-{\id\times \si_i}  
& ^{k+1}Y\times X^{k+1} \ar[d]^{\rh_{k+1}} \\
^kY\times X^k \ar[r]_{\rh_k}   & Y\sma X\\}\\ $$
On the $j$th summand  $Y\times X^k$ for $j\neq i$, both composites are $q\com (\id\times \pi_j)$ for $j<i$ and for 
$j>i$, and both composites are the trivial map to the basepoint when $j = i$. Therefore the $\rh_k$ induce the desired map $\rh$ from 
$^{*}Y\otimes_{\LA} X^{*}$ to $Y\sma X$.
\end{proof}
 
\subsection{The natural map $\ph$}  In a general context, we assume that $\sV$ is closed, but we again focus on $\sV = G\sU$.
For $Z\in G\sU$ and  $\sD\colon \LA^{op}\rtarr G\sU$, define
$\ul{G\sU}(Z,\sD) \colon \LA^{op}\rtarr G\sU$  by 
$$  \ul{G\sU}(Z,\sD)(\bk) = \ul{G\sU}(Z,\sD(\bk)).$$
\begin{lem}\label{note2}  For $\sD\colon \LA^{op}\rtarr G\sU$ and $\sX\colon \LA\rtarr G\sU$, there is a natural map
$$\ph \colon \ul{G\sU}(Z,\sD)\otimes_{\LA} \sX \rtarr \ul{G\sU}(Z, \sD\otimes_{\LA} \sX)$$
 in $G\sU$.
\end{lem}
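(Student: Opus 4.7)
The plan is to build $\ph$ from the universal property of coends using the standard evaluation in the closed symmetric monoidal category $\sV$. Recall that since $\sV$ is closed, for any $V, A, B \in \sV$ there is a canonical natural map
$$\mu_{A,B}\colon \ul{\sV}(V, A) \otimes B \rtarr \ul{\sV}(V, A \otimes B),$$
defined as the adjunct of the composite $V\otimes \ul{\sV}(V,A)\otimes B \xrightarrow{\mathrm{ev}\otimes \id} A\otimes B$.

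The construction of $\ph$ proceeds as follows. For each $\bk \in \LA$, form the composite
$$\ph_\bk\colon \ul{\sV}(V,\sD(\bk))\otimes \sX(\bk) \xrightarrow{\mu_{\sD(\bk),\sX(\bk)}} \ul{\sV}(V,\sD(\bk)\otimes \sX(\bk)) \xrightarrow{\ul{\sV}(V,\iota_{\bk})} \ul{\sV}(V,\sD\otimes_{\LA}\sX),$$
where $\iota_{\bk}$ is the canonical coprojection into the coend. I then verify that the family $\{\ph_\bk\}$ coequalizes the two parallel arrows appearing in \autoref{coend}; this is the one step requiring checking. Concretely, for each $\la\colon \bi\rtarr \bj$ in $\LA$, one must show that the composite through $\la^*\otimes\id$ (via level $\bi$) agrees with the composite through $\id\otimes\la_*$ (via level $\bj$) after applying $\ph_\bullet$. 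By naturality of $\mu$ in the first variable applied to $\sD(\la)\colon \sD(\bj)\rtarr\sD(\bi)$, both composites factor through $\ul{\sV}(V, -)$ applied to the two maps $\sD(\bj)\otimes\sX(\bi)\rightrightarrows \sD\otimes_{\LA}\sX$ whose coequalization defines the coend target. These are already identified in $\sD\otimes_{\LA}\sX$, so the two composites agree and $\ph$ is induced.

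A cleaner but equivalent route is by adjunction. Since $\sV$ is closed, $V\otimes -$ preserves all colimits, so
$$V\otimes \bigl(\ul{\sV}(V,\sD)\otimes_{\LA}\sX\bigr) \iso \int^{\bk\in\LA} V\otimes\ul{\sV}(V,\sD(\bk))\otimes \sX(\bk).$$
Applying the evaluation $\mathrm{ev}\otimes \id\colon V\otimes \ul{\sV}(V,\sD(\bk))\otimes \sX(\bk)\rtarr \sD(\bk)\otimes \sX(\bk)$ pointwise gives, by the universal property, a map of the above coend into $\sD\otimes_{\LA}\sX$; taking adjuncts recovers $\ph$. Naturality in $V$, $\sD$, and $\sX$ is then immediate from naturality of evaluation and of coends.

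The only substantive step is the coequalization check in the first approach, and it is immediate once one writes out the relevant square; there is no genuine obstacle. The main ``content'' of the statement is simply that the pointwise closed-category map $\mu$ is compatible with the coend relations defining $\otimes_{\LA}$, which is formal.
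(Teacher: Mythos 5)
Your proof is correct and takes essentially the same approach as the paper: build $\ph$ levelwise from the canonical closed-category map $\ul{\sV}(V,A)\otimes B\rtarr \ul{\sV}(V,A\otimes B)$ and check that the resulting family is compatible with the coend relations. The only difference is that you spell out the coequalization check (and offer the clean adjoint reformulation using that $V\otimes-$ preserves colimits), whereas the paper defers that verification to an analogy with its \autoref{note1}; note for precision that your check invokes naturality of $\mu$ in \emph{both} variables, not just the first as written.
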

\begin{proof}
Recall that for $G$-spaces $X$, $Y$, $Z$ in $G\sU$ we have the usual natural map
$$   \ul{G\sU}(Z,X) \times Y  \rtarr \ul{G\sU}(Z, X\times Y)$$
adjoint to the evaluation map
$$   \xymatrix@1{ \ul{G\sU}(Z,X)\times Y \times Z \iso \ul{G\sU}(Z,X) \times Z \times Y   \ar[r]^-{\epz\times \id} & X\times Y. \\}  $$
This specializes to give maps 
$$  \ul{G\sU}(Z, \sD(\bk)) \times \sX(\bk)   \rtarr  \ul{G\sU}(Z,\sD(\bk) \otimes  \sX(\bk)). $$
These maps are easily seen to be $\SI_k$-equivariant, and a diagram analogous to the one in the proof of \autoref{note1} shows that they pass to coequalizers to give the desired map $\ph$. 
\end{proof}
We shall make use of two specializations of this general construction. The first gives the map $\ph$ in \autoref{scan}. 
\begin{lem}\label{note2a}   For based $G$-spaces $X,Y,Z$,  there is a natural map of based $G$-spaces
$$   \ph\colon  \ul{G\sU}(Z,\, ^*X)\otimes_{\LA} Y^* \rtarr \ul{G\sU}(Z,\, ^*X\otimes_{\LA} Y^*).$$
\end{lem}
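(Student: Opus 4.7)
The statement is an immediate specialization of \autoref{note2}: take $\sD = {^*Y}$ and $\sX = X^{\otimes *}$ as the relevant contravariant and covariant functors on $\LA$. So the plan is simply to invoke \autoref{note2} to produce the map $\ph$, and then verify the extra based condition that is not explicitly asserted in the more general \autoref{note2}.

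First I would instantiate the construction. By \autoref{note2} applied to $\sD = {^*Y}$ and $\sX = X^{\otimes *}$, we have levelwise maps
\[ \ul{\sV}(V, {^kY}) \otimes X^{\otimes k} \rtarr \ul{\sV}(V, {^kY} \otimes X^{\otimes k}) \]
adjoint to the evident evaluation, and the proof of \autoref{note2} shows these are $\SI_k$-equivariant and compatible with the structure maps $\si_i$ coming from injections in $\LA$. Hence they coequalize to produce $\ph$.

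Second I would check that $\ph$ is a map of based objects. Both sides carry base maps induced by $\mathbf{0} \in \LA$: on the left, the wedge summand indexed by $k=0$ contributes $\ul{\sV}(V, {^0Y}) \otimes X^{\otimes 0} \iso \ul{\sV}(V,*) \otimes \mI \iso \mI$, and on the right the base map is $\mI \rtarr \ul{\sV}(V, {^0Y} \otimes_\LA X^{\otimes *})$ adjoint to the map landing in the degree-zero part of the coend. A direct check from the $k=0$ component of the evaluation maps shows that $\ph$ carries one base map to the other.

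The only step that requires any care is verifying basepoint preservation, but this reduces to chasing the $k=0$ component through the universal property of the coend, so it is essentially automatic. Consequently, the proposed proof is just to cite \autoref{note2} and record the base-point compatibility.
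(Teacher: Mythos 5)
Your proposal is correct and matches the paper's approach: the paper simply states this lemma (together with \autoref{note2b}) as a specialization of \autoref{note2} without further proof, so citing \autoref{note2} with $\sD = {^*Y}$, $\sX = X^{\otimes *}$ is exactly what is intended. Your additional check that $\ph$ preserves base maps via the $k=0$ component (where, since $\sV$ is cartesian monoidal, $^0Y = \ast$ and $\ul{\sV}(V,\ast)\iso\mI$) is a detail the paper leaves implicit, and it is the right thing to verify.
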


\begin{lem}\label{note2b}
For $\LA$-sequences $\sD,\sE\in \LA^{op}[G\sU]_{\mI_0}$, there is a natural map of $\LA$-sequences
$$   \ph\colon  \ul{G\sU}(Z,\, \sD )\odot \sE\rtarr \ul{G\sU}(Z,\sD\odot \sE).$$
\end{lem}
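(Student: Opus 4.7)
The plan is to construct $\ph$ levelwise by directly applying \autoref{note2}, and then to check naturality in the $\LA^{op}$-variable together with compatibility with base maps.

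First, I would unpack both sides at an object $\bn\in\LA^{op}$. Using \autoref{KellyLA},
\[
\big(\ul{\sV}(V,\sD)\odot\sE\big)(\bn) \;=\; \ul{\sV}(V,\sD)\otimes_{\LA}\sE^{\boxtimes *}(\bn),
\qquad
\ul{\sV}\!\big(V,(\sD\odot\sE)(\bn)\big) \;=\; \ul{\sV}\!\big(V,\sD\otimes_{\LA}\sE^{\boxtimes *}(\bn)\big).
\]
Thus, fixing $\bn$ and taking the covariant functor $\LA\rtarr\sV$ to be $\sX := \sE^{\boxtimes *}(\bn)$ (which is exactly the covariant $\LA$-functor described after \autoref{loxlox}), the desired map at level $\bn$ is precisely the arrow produced by \autoref{note2} applied to the pair $(\sD,\sX)$. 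Define $\ph_\bn$ to be this map.

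Next I would verify that the collection $\{\ph_\bn\}_\bn$ assembles into a natural transformation of functors $\LA^{op}\rtarr\sV$. Given a morphism $\lambda\colon\bm\rtarr\bn$ in $\LA$, the $\LA^{op}$-action at both sides is induced by the map $\sE^{\boxtimes *}(\bn)\Rightarrow\sE^{\boxtimes *}(\bm)$ of covariant $\LA$-functors (precomposition in the $\LA(\bn,\mathbf{j_1+\cdots+j_k})$ variable of \autoref{coend2}). The construction in \autoref{note2} is natural in $\sX$ because the underlying pointwise map $\ul{\sV}(V,W)\otimes Z\rtarr\ul{\sV}(V,W\otimes Z)$ is natural in $Z$ and the coequalizer defining $\otimes_{\LA}$ depends functorially on $\sX$. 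This gives the required square.

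Then I would check that $\ph$ is a morphism in $\LA^{op}[\sV]_{\mI_0}$, i.e.\ compatible with base maps under $\mI_0$. Evaluation at $\mathbf{0}$ uses $\Ek(\mathbf{0})\iso\mI$ only when $k=0$ (otherwise the $\LA(\mathbf{0},\mathbf{j_1+\cdots+j_k})$-factor forces vanishing unless all $j_i=0$), and both sides reduce at level $\mathbf{0}$ to copies of $\ul{\sV}(V,\sD(\mathbf{0}))$, via which $\ph_\mathbf{0}$ is the identity. The base maps $\mI\rtarr\ul{\sV}(V,\sD(\mathbf{0}))$ induced by $\et\colon\mI\rtarr\sD(\mathbf{0})$ therefore match on the two sides, so $\ph$ is a morphism of $\LA$-sequences. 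Naturality in $V$, $\sD$, $\sE$ is inherited from the naturality of the underlying hom–tensor comparison and of coends.

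The only step that requires genuine bookkeeping is the second one, because the formulas involve nested coends (one from $\odot$, one from $\boxtimes^{k}$). However, since \autoref{note2} was already proved by passing the pointwise evaluation map through the coequalizer defining $\otimes_{\LA}$, the outer coend in $\bn$ causes no trouble: everything is natural in the covariant $\LA$-variable. So the main (mild) obstacle is the verification of compatibility with the $\SI_k$-equivariant structure and the injections $\sigma_i$ that describe the covariant functoriality of $\sE^{\boxtimes *}$ recorded at the end of \autoref{comb}; this is a direct diagram chase from the definitions.
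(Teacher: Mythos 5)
Your proposal is correct and takes exactly the paper's approach: the paper's own proof is a one-liner that says precisely ``take $\sX$ in \autoref{note2} to be $\sE^{\boxtimes *}(\bn)$ for varying $\bn$,'' which is the levelwise construction you spell out. The extra detail you give on naturality in the $\LA^{op}$-variable and compatibility with base maps is sound (the naturality reduces to naturality of the hom--tensor comparison in $Z$ and functoriality of the coequalizer defining $\otimes_\LA$), though your parenthetical claim that $\sE^{\boxtimes k}(\mathbf{0})\iso\mI$ ``only when $k=0$'' is not quite right for general based (as opposed to unital) $\sE$ — the relevant identification at level $\mathbf{0}$ comes instead from \autoref{KellyLA}, which gives $(\sD\odot\sE)(\mathbf{0})=\bD\sE(\mathbf{0})$.
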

\begin{proof}
We define $\ph$ by taking $\sX$ in \autoref{note2} to be $\sE^{\boxtimes *}(\bn)$ for varying $n$.
\end{proof}

\subsection{The proof of \autoref{Dfunctors}}
We interpret $\ul{G\sT}(M^+,-)$ as $\ul{G\sT}^{op}(-,M^+)$ in this proof. 
The claim is that if we replace $X$ by $\Vbfr X$ in \autoref{scan}, then the resulting composite maps to the composite as written, with unit and associativity conditions holding at each level.  We will leave verification of the unit and associativity conditions to the reader, focusing on constructing the three maps of $\Vbfr$-functors.   To abbreviate notation in the verifications sketched below, let us write

$$  \bT_M^a X = \ul{G\sT}^{op}({}^*S^V, M^+)\otimes_{\LA} X^*  $$

$$  \bT_M^b X = \ul{G\sT}^{op}({}^*S^V\otimes_{\LA} X^*, M^+)  $$

$$ \bT_M^c X = \ul{G\sT}^{op}(\SI^VX,M^+)  $$

\noindent
Then the claim is that we can construct horizontal action maps that make make the following diagram commute, where we abbreviate 
$\pi_M\otimes_{\LA}\id$ to $\pi_M$ and $\ul{G\sT}^{op}(\rh,\id)$ to $\rh_*$. 

\begin{equation}\label{scheme}
 \xymatrix{  
\Mbfr \Vbfr X  \ar[d]_{\pi_M} \ar[r]^-{\tha} & \Mbfr \ar[d]^{\pi_M} \\
\bT_M^a  \Vbfr X  \ar[d]_{\ph} \ar[r]^-{\tha^a} & \bT_M^a \ar[d]^{\phi}\\
\bT_M^b \Vbfr X  \ar[d]_{\rh_*} \ar[r]^-{\tha^b} & \bT_M^b  \ar[d]^{\rh_*}\\
\bT_M^c  \Vbfr X  \ar[r]_-{\tha^c} & \bT_M^c \\} 
\end{equation}

\noindent
By \autoref{equivcomp}, the top two composite functors in the left column are of the form $\bD\odot \bE$, as defined in \autoref{odot}, and the bottom two should be thought of as analogues.  For the top square,  we claim that the $\LA$-sequences $\Mfr$ and $\FM$ are right modules over the operad $\Vfr$, thought of as a monoid. This is shown by the following diagram, which is just a rewriting of \autoref{WWprime}.

{\small{

\begin{equation}\label{Yuck1}
  \xymatrix{  
\Mfr \odot \Vfr \ar[rr]^-{\com} \ar[d]_{\pi_M \odot \id} \ar[dr]^{\pi_M\odot\pi_V} &&  \Mfr \ar[d]^{\pi_M} \\
\ul{G\sT}^{op}({}^*S^V,M^+) \odot \Vfr  \ar[r]_-{\id\otimes\pi_V} &  \ul{G\sT}^{op}({}^*S^V,M^+) \odot  \ul{G\sT}^{op}({}^*S^V,S^V) \ar[r]_-{\com}  &   \ul{G\sT}^{op}({}^*S^V,M^+) \\}
  \end{equation}
  
 } } 
\noindent
Applying the functor $(-)\otimes_{\LA} X^{\otimes *}$, the horizontal arrows give $\tha$ and $\tha^a$ in \autoref{scheme} and the diagram gives the commutativity of its top square.

With $\sW'$ as defined in \autoref{trans}, we have
 \[   \ul{\Hom}_{\sW'}(S^V,\ast^+) =  {}^{*}\! S^V \ \ \text{and} \ \   \ul{\Hom}_{\sW'}(S^V,S^V) = \ul{G\sT}^{op}({}^{*}\! S^V, S^V). \]
Therefore  specialization of \autoref{HCat} to $\sW'$ gives that ${}^{*}\! S^V$ is a right $\Vfr$-module via the action
 
 \begin{equation}\label{suck1}
  \xymatrix@1{ {}^{*}\! S^V \odot \Vfr \ar[r]^-{\id\odot \pi_V} & {}^{*}\! S^V \odot \ul{G\sT}^{op}({}^*S^V,S^V) \ar[r]^-{\com} & {}^{*}\! S^V. \\}
  \end{equation}
 
\noindent
Applying $\ul{G\sT}^{op}(-,M^+)$ to \autoref{suck1} and then using \autoref{note2b} to compare with the bottom row of \autoref{Yuck1}, we obtain the following commutative diagram.
 
 {\small{

\begin{equation}\label{Yuck2}
  \xymatrix{  
\ul{G\sT}^{op}({}^*S^V,M^+) \odot \Vfr  \ar[r]^-{\id\otimes\pi_V} \ar[d]_{\ph} &  \ul{G\sT}^{op}({}^*S^V,M^+) \odot  \ul{G\sT}^{op}({}^*S^V,S^V) \ar[r]^-{\com} \ar[d]^{\ph} &   \ul{G\sT}^{op}({}^*S^V,M^+)  \ar[d]^{=} \\
\ul{G\sT}^{op}({}^*S^V \odot \Vfr,M^+) \ar[r]_-{(\id\odot\pi_V) _*} & \ul{G\sT}^{op}({}^{*}\! S^V \odot \ul{G\sT}^{op}({}^*S^V,S^V),M^+)
\ar[r]_-{\com_*} &  \ul{G\sT}^{op}({}^*S^V,M^+) \\}
 \end{equation}
  
 } } 
\noindent
Reinterpreting monadically by applying $(-)\otimes_{\LA}  X^{\otimes *}$, the bottom row gives $\tha^b$ in \autoref{scheme} and the diagram gives the commutativity of its middle square.

Analogously to \autoref{suck1}, but now starting on the functor level, we have the $\bD_V^{\mathrm{fr}}$-action on the functor 
$\Sigma^V$ given by
\begin{equation}\label{suck2}
  \Sigma^V \bD_V^{\mathrm{fr}}X \overset{\Sigma^V\Phi}{\rtarr }\Sigma^V \ul{G\sT}^{op}(\Sigma^V X,S^V) 
  \overset{\epsilon}{ \rtarr } \Sigma^V X.
\end{equation}

\noindent
Applying the functor $\ul{G\sT}^{op}(-,M^+)$, this gives the action $\tha^c$ in \autoref{scheme}.  Using 
$$\bT^a_VX = \ul{G\sT}^{op}({}^*S^V,S^V)\otimes_{\LA} X^*$$
and \autoref{note1}, we find that the following diagram commutes.

\begin{equation}\label{Yuck3}
\xymatrix{
{}^*S^V\otimes_{\LA} (\Vbfr X)^* \ar[r]^-{\id\otimes_{\LA} \pi_V} \ar[d]_{\rh} 
& {}^*S^V\otimes_{\LA}(\bT^a_V  X)^* \ar[r]^-{\com} \ar[d]^{\rh} &  {}^*S^V\otimes_{\LA} X^* \ar[d]^{\rh} \\
\SI^V\Vbfr X \ar[r]_-{\SI^V\pi_V} & \SI^V \bT^a_V X \ar[r]_-{\epz}  & \SI^V X \\}
\end{equation}
\noindent
Applying $\ul{G\sT}^{op}(-,M^+)$ to this diagram and chasing definitions, we find that it implies that the bottom square of \autoref{scheme} commutes.

\section{Remarks on the relevance of $\LA$-sequences to other topics}\label{remarks} 

While $\SI$-sequences (that is symmetric sequences or collections) are all over the operadic literature, $\LA$-sequences seldom appear.  However they are very often implicit, and extensions of results from $\SI$-sequences to $\LA$-sequences are very often of interest.  We give a  brief sampling here.

\subsection{Remarks on modules over the commutativity operad}\label{Nmods}
 Let $\sN$ denote the commutativity operad in $\sV$.  It has $\sN(k) = \mI$ for all $k\geq 0$. The symmetric group actions are trivial, and all of its structure maps are the identity.  We can regard $\sN$ as a monoid in the monoidal category $\SI^{op}[\sV]$ with its Kelly product $\odot_{\SI}$.  In \cite{Fresse0}, Fresse considered right $\sN$-modules as in \autoref{module}, but with $\LA$ replaced by $\SI$.  We prove the analog of his result in our context of $\LA$-sequences.
 
Let $\sF$ denote the category of finite based sets $\bn=\{0, 1, \cdots, n\}$
  with basepoint $0$, with morphisms all based maps.  Let $\sF_{>0}$ be the
  subcategory of $\sF$ with objects the $\bn$ and with those functions $\ph$
  such that $\ph^{-1}(0) = 0$; equivalently, it is the category of finite
  unbased sets.   Thus $\sF$ is the category of operators associated to $\sN$ (see \autoref{catopsC}), and $\sF_{>0}$ excludes, for example,  the projections $\de_j\colon \bn \rtarr \mathbf{1}$ that send $j$ to $1$ and $i$ to $0$ for $i\neq j$ that are crucial to the homotopical study of $\sF$-objects $X\colon \sF \rtarr \sV$. 

 In  \cite[5.1.6]{Fresse0}, Fresse proves that  right $\sN$-modules in $\SI^{op}[\sV]$ are exactly those $\SI$-sequences that extend to $\sF_{>0}^{op}[\sV]$.
Since $\SI\subset \LA \subset \sF_{>0}$, it is natural to expect the analog of Fresse's result to hold for $\LA$-sequences.

\begin{prop}\label{Fressetoo}
Right $\sN$-modules in $\LA^{op}[\sV]$ are those $\LA$-sequences that extend to $\aF_{>0} ^{op}[\sV]$.
\end{prop}
\begin{proof}
We first explain how the proof  works for $\SI$-sequences.  Thus let $\sE \in \SI^{op}[\sV]$ be a $\SI$-sequence. 
By \autoref{odotunits},
\begin{align*}
  (\sE \odot_{\SI} \comm) (\bn) & = (\sE \otimes_{\SI} \comm^{\boxtimes *} )(\bn)\\
    & =  \coprod_{k\ge 0}\coprod_{j_1+\cdots + j_k=n} \sE  (\bk) \otimes_{\Sigma_k}  \Big( (\comm
      (\bj_1) \otimes \ldots \otimes \comm (\bj_k)) \otimes_{\Sigma_{j_1} \times \ldots \times \Sigma_{j_k}} \Sigma_n \Big) \\
  &  =  \coprod_{k\ge 0}\coprod_{j_1+\cdots + j_k=n} \sE  (\bk) \otimes_{\Sigma_k}  \mI \otimes_{\Sigma_{j_1} \times \ldots \times \Sigma_{j_k}} \Sigma_n  \\
    &  = \coprod_{k\ge 0} \sE  (\bk) \otimes_{\Sigma_k}  \sF_{>0}( \{1,\cdots,n\}, \{1, \cdots, k\})
\end{align*}
Here, the last identity is because the index set $\{j_1+ \cdots + j_k = n\}$ is in bijection with the set of order preserving maps
$\ph\colon \{1,\cdots,n\} \to \{1, \cdots, k\}$ such that $j_i=|\ph^{-1}(i)|$. Applying $- \otimes_{\Sigma_{j_1} \times \ldots \times \Sigma_{j_k}} \Sigma_n$ then gives all maps $\bn \rtarr \bk$ in $\sF_{>0}$ as our indexing set.   The right $\comm$-module structure map
$$(\sE \odot_{\SI} \comm)(\bn) \to \sE(\bn)$$
then induces
\begin{equation*}
\ph^{*}: \sE(\bk) \to \sE(\bn).
\end{equation*}
See \cite[5.1.6]{Fresse0} for details, with some of the details credited to \cite{KapMan}.

Working instead with $\LA$-sequences, so taking $\sE \in \LA^{op}[\sV]$, we are given the extension from $\SI$ to $\LA$ and claim a further extension from $\LA$ to $\sF_{>0}$. Here
\begin{align*}
  (\sE \odot_{\LA} \comm) (\bn) & = (\sE \otimes_{\LA} \comm^{\boxtimes *} )(\bn) \\
  & = (\sE \otimes_{\SI} \comm^{\boxtimes *} )(\bn)/\sim \\
 &  =  \Bigg(\coprod_{k\ge 0}\coprod_{j_1+\cdots + j_k=n} \sE  (\bk) \otimes_{\Sigma_k}  \mI \otimes_{\Sigma_{j_1} \times
   \ldots \times \Sigma_{j_k}} \Sigma_n \Bigg)/\sim  \\
  &  =  \Bigg(\coprod_{k\ge 0} \sE  (\bk) \otimes_{\Sigma_k}  \sF_{>0}( \{1,\cdots,n\}, \{1, \cdots, k\})
      \Bigg)/ \sim 
\end{align*}
The equivalence relation $\sim$ is dictated by tensoring over $\LA$ rather than $\SI$. Its effect is to force the extension from $\SI$ to $\sF_{>0}$ defined above on the underlying $\SI$-sequence of the $\LA$-sequence $\sE$ to agree with the extension from  $\SI$ to $\LA$ given by $\sE$.  In effect, $\sim$ coequalizes  maps induced by the injections $\si_i\colon \mathbf{k} \rtarr \mathbf{k+1}$ with
 $1\leq i\leq k+1$.   For a map $\ps \colon \{1,\cdots,n\} \to \{1,\cdots,k+1\}$ in $\sF_{>0}$
 such that $\ps^{-1}(i) = \emptyset$ , there is a unique $\ph: \{1,\cdots,n\} \to
   \{1,\cdots,k\}$ such that  $\ps = \si_i\circ \ph$, and the term of 
$$  \sE(\mathbf{k+1}) \otimes \mI \otimes_{\Sigma_{j_1} \times \cdots \times \SI_0\times \cdots \times \Sigma_{j_k}} \Sigma_n$$ 
indexed on $\ps$ is attached by $\si_i^*\otimes \id$ to the term of
$$\sE (\bk)\otimes\mI \otimes_{\Sigma_{j_1} \times \ldots \times \Sigma_{j_k}} \Sigma_n $$
indexed on $\ph$.
\end{proof}

\subsection{A remark on model categories}
As pointed out in \cite[p.812]{BMAx}, it is hard to obtain a  model structure on the category of all operads in a model category $\sV$.  This is true since for any operad $\sC$ and $\sC$-algebra $A$, one can construct a new operad, for example the endomorphism operad $\End(A)$ when $\sV$ is closed, with $0$th term $A$, so that the category of operads  implicitly contains the category of algebras over any operad.   The situation is different for unital operads, where there are usually no unital endomorphism operads.  Starting with a cofibrantly generated monoidal model category $\mathcal{\sV}$, we have the projective model structure on the category $\Sigma^{op}[\mathcal{\sV}]$ of symmetric sequences in $\sV$. Its weak equivalences and fibrations are the levelwise weak equivalences and fibrations.  In Berger--Moerdijk \cite[Theorem 3.1]{BMAx}, mild conditions on $\mathcal{\sV}$ are given so that the category of unital operads\footnote{Unital operads are called reduced operads in \cite{BMAx}.} in 
$\mathcal{\sV}$ admits a cofibrantly generated model structure with weak equivalences and fibrations created by those of  $\Sigma^{op}[\mathcal{\sV}]$. Putting $\Lambda$-sequences  into the picture, the same argument works with $\SI$ replaced by $\LA$. Then the Quillen adjunction of \cite{BMAx} can be checked to be the composite of Quillen adjunctions
\begin{equation*}
  \begin{tikzcd}
    \Sigma^{op}[\mathcal{\sV}]^{\mathrm{proj}}  \ar[r, shift left ]  &
    \Lambda^{op}[\mathcal{\sV}]^{\mathrm{proj}} \ar[l, shift left, "\mathrm{forget}" ]  \ar[r,
    shift left ]  & \text{unital operads in } \mathcal{\sV}. \ar[l, shift left,  "\mathrm{forget}" ]
  \end{tikzcd}
\end{equation*}

\subsection{A remark on a generalization of the Kelly product}
The proof that the Kelly product is associative requires our assumption that $X\otimes -$ preserves finite colimits.  Motivated by coalgebraic structures, 
Ching in \cite[Definitions 2.10-2.12]{ChingComposition} generalizes the Kelly product to situations where this fails by defining functors 
  $$\mu_n : (\Sigma^{op}[\sV])^n \rtarr \Sigma^{op}[\sV]$$
   so that they equip $\Sigma^{op}[\sV]$ with a so-called normal oplax monoidal structure. 
  Using this, he can still make sense of monoids and bar constructions in
  $\Sigma^{op}[\sV]$, particularly for applications to cooperads.
  It would be interesting to do the same thing for $\Lambda^{op}[\sV]$, but due to
  the base point identifications, one would need
  a more complicated construction than Ching's to define the $\mu_n$. We have not pursued this.

\subsection{Remarks on dendroidal sets}

It is interesting to compare our context to dendroidal sets.
  A dendroidal set is a contravariant functor from
$\Omega$, the category of trees, to Sets (see \cite[3.4]{HeutsMoerdijk}). Roughly speaking, following \cite[p. xi]{HeutsMoerdijk}, the theory of dendroidal sets extends the theory of simplicial sets to allow “multiple inputs", just as the theory of multicategories, alias colored operads, extends the theory of categories.

The definition of $\Omega$ is a bit involved, and we only point out some  illustrative
features. Among the objects of $\Omega$, there are the ``$n$-corolla'' $C_n$, $n\geq 0$,  and their closures
$\overline{C_n}$ as displayed in Figure 1 below. Note that $\overline{C_0}=C_0$.
\begin{figure}[h!]\label{trees}
  \begin{subfigure}{0.18\textwidth}
    \begin{tikzpicture}[caption = {$C_0$}]
    \node {} [grow'=up]
    child {[fill] circle (2pt)};
    \end{tikzpicture}
  \end{subfigure}
\hfill
   \begin{subfigure}{0.18\textwidth}
    \begin{tikzpicture}[caption = $C_1$]
  \node {} [grow'=up]
  child {[fill] circle (2pt)
     child };
 \end{tikzpicture}
\end{subfigure}
\hfill
  \begin{subfigure}{0.18\textwidth}
    \begin{tikzpicture}[caption = {$C_2$}]
    \node {} [grow'=up]
    child {[fill] circle (2pt)
      child
      child
    };
    \end{tikzpicture}
  \end{subfigure}
 \hfill
  \begin{subfigure}{0.18\textwidth}
    \begin{tikzpicture}[caption = {$C_3$}]
    \node {} [grow'=up]
    child {[fill] circle (2pt)
      child
      child
      child
    };
    \end{tikzpicture}
  \end{subfigure}
  
  \begin{subfigure}{0.18\textwidth}
    \begin{tikzpicture}[caption = {$\overline{C_0}$}]
    \node {} [grow'=up]
    child {[fill] circle (2pt)};
    \end{tikzpicture}
  \end{subfigure}
  \hfill
   \begin{subfigure}{0.18\textwidth}
    \begin{tikzpicture}[caption = $\overline{C_1}$]
  \node {} [grow'=up]
  child {[fill] circle (2pt)
     child {[fill] circle (2pt)}};
 \end{tikzpicture}
\end{subfigure}
\hfill
  \begin{subfigure}{0.18\textwidth}
    \begin{tikzpicture}[caption = {$\overline{C_2}$}]
    \node {} [grow'=up]
    child {[fill] circle (2pt)
      child {[fill] circle (2pt)}
      child {[fill] circle (2pt)}
    };
    \end{tikzpicture}
  \end{subfigure}
 \hfill
  \begin{subfigure}{0.18\textwidth}
    \begin{tikzpicture}[caption = {$\overline{C_3}$}]
    \node {} [grow'=up]
    child {[fill] circle (2pt)
      child {[fill] circle (2pt)}
      child {[fill] circle (2pt)}
      child {[fill] circle (2pt)}
    };
    \end{tikzpicture}
  \end{subfigure}
  \label{fig:trees}
  \caption{Some trees}
\end{figure}

Morphisms are less straightforward.  In particular, there is no
morphism between $C_i$ and $C_j$ for $i \neq j$ (see \cite[top of p.97]{HeutsMoerdijk}); but there are morphisms between
$\overline{C_i}$ and $\overline{C_j}$ for each inclusion $\bi \to \bj$ (see \cite[Proposition 3.5]{HeutsMoerdijk}).

To compare with our context, the category $\Sigma$ is a full subcategory of $\Omega$ by
sending $\bn$ to $C_n$ for $n \geq  0$;  $\Lambda$ is a full subcategory by sending $\bn$ to
$\overline{C_n}$.  For a dendroidal set $\sX$, write $\sX_T$ for the value of 
$\sX$ on a tree $T$.  We have a forgetful functor from dendroidal sets to $\SI$-sequences that sends $\sX$ to 
$\{\sX_{C_n}\}$ and a  forgetful functor to $\LA$-sequences that sends $\sX$ to $\{\sX_{\overline{C_n}}\}$.  
In $\Omega$, there is a canonical morphism $T \to \overline{T}$ from a tree $T$ to its closure. By contravariance, these maps induce maps 
$\rh \colon \sX_{\overline{T}} \rtarr \sX_T$; $\sX$ is said to be closed  if $\rh$ is a bijection for all $T$.  

As shown in \cite[p. 108]{HeutsMoerdijk}, there is a functor $\mathrm{N}$ from operads in sets to dendroidal sets such that 
$\mathrm{N}(\sC)_{C_n} = \sC(n)$ for $\ n \geq 0$.  Putting things together, we have the following diagram.  The unlabelled solid arrows are forgetful functors; $\rho$ induces the natural transformation in the triangle on the right, and the outer triangle commutes.

\begin{equation*}
\begin{tikzcd}
  \text{Operads} \ar[rr," \mathrm{N}"] \ar[ddr]
  \ar[dr, dotted] & & \text{Dendroidal Sets} \ar[dl] \ar[ddl, ""{name = U,below}] \\
   & \Lambda \text{-sequences } \ar[d] \ar[to=U, Rightarrow] & \\
   & \Sigma \text{-sequences } &
\end{tikzcd}
\end{equation*}

For operads $\sC$ in sets, which is cartesian monoidal, unital means $\sC(0) = \ast$, and we have the following conclusion.

\begin{prop} For an operad $\sC$ in sets, $\mathrm{N}(\sC)$ is closed if and only if $\sC$ is unital, and these are precisely those 
operads $\sC$ for which  the dotted arrow in the left triangle exists making the diagram commute.
\end{prop}
\begin{proof}   For ``only if'', unraveling definitions in \cite[p108(e)]{HeutsMoerdijk} gives that  
$$\mathrm{N}(\sC)_{\overline{C_n}}=\sC(n) \times \sC(0)^n$$
 for $n > 0$.  For ``if", see \cite[p117]{HeutsMoerdijk}.
\end{proof}

\section{Appendix: operads as symmetric monoidal categories}\label{SMC}

This appendix is peripheral to our preferred take on operads, but it gives an illuminating alternative perspective.  It shows how operads in $\sV$ are equivalent to particular symmetric monoidal categories enriched in $\sV$ and it relates this perspective to the categories of operators that were introduced in \cite{MT}.   Due to \autoref{loxlox}, everything here works equally well for $\LA$-sequences as for $\SI$-sequences, but we shall focus on the latter throughout.

\subsection{The permutative envelope of operads}\label{SME}

The symmetric envelope of operads was first introduced in the $\infty$-category context
  in \cite[Definition 1.6.1]{Lurie} in greater generality than we will consider here.
For us,  it is another name for a construction in \cite[Proposition 3.1]{BBPTY2}, with the name coming from
such sources as \cite[p. 291]{CGw} or \cite[Section 3]{Lawson}.   Recall that a permutative category is a symmetric strict monoidal category, so that the product is strictly associative and unital.\footnote{The paper \cite{BBPTY2} uses the term strict symmetric monoidal category, without defining strict. According to Maria Basterra, symmetric strict, alias permutative, is what is meant.}   We prefer the term permutative envelope since that more accurately describes the construction in \cite{BBPTY2}.

The Day and Kelly products are implicitly used in the construction.  Making this use explicit gives a  conceptual reformulation of the construction. 
Using the universal property of the Day convolution  $\boxtimes = \boxtimes_{\SI}$, we obtain an associative and symmetric pairing 
\begin{equation}\label{pairing} 
\boxtimes\colon  \sD^{\boxtimes p}(\bm) \otimes \sE^{\boxtimes q}(\bn)  
\rtarr (\sD^{\boxtimes p} \boxtimes \sE^{\boxtimes q})(\bm + \bn) 
\end{equation}
between symmetric sequences in $\sV$.  Using \autoref{loxlox}, we see that it is induced in the evident way by $\otimes$ (in $\sV$) and the inclusion of $\SI_m\times \SI_n$ in $\SI_{m+n}$.  We are using $\boxtimes_{\SI}$ in this section, but 
\autoref{loxlox} shows that when $\sD$ and $\sE$ are $\LA$-sequences, we obtain the same symmetric sequence values if we replace 
$\boxtimes_{\SI}$ with $\boxtimes_{\LA}$. Note that $\sD = \mI_0$ is a unit for this pairing.  

Specializing the definition of $\odot = \odot_{\SI}$ gives $\sD^{\boxtimes p}\otimes_{\SI} \big (\sE^{\boxtimes *}(\bm)\big )  = (\sD^{\boxtimes p}\odot \sE)(\bm)$.  Therefore we have a natural map
\begin{equation}\label{compair}
\al\colon \sD^{\boxtimes p}(\bn) \otimes \sE^{\boxtimes \bn}(\bm) \rtarr  (\sD^{\boxtimes p}\odot \sE)(\bm) 
\iso (\sD\odot\sE)^{\boxtimes p}(\bm),
\end{equation}
for each $n$ and $m$, where the isomorphism is given by the second statement of \autoref{key}.
We focus on the case $\sD= \sE$ of these pairings.

We have the evident adjunction between functor categories
$${\bf{Fun}}(\SI, {\bf{Fun}}(\SI^{op}, \sV )) \iso  {\bf{Fun}}(\SI^{op}\times \SI, \sV)$$
(and similarly for $\LA$).  Thus, for a $\SI$-sequence $\sE$, we may view
$\sE^{\boxtimes *}$ as the adjoint bifunctor $\overline{\sE}\colon \SI^{op} \times \SI\rtarr \sV$ given by
\begin{equation}\label{tildeE}
\overline{\sE}(\bm, \bn) = \sE^{\boxtimes n}(\bm).
\end{equation}
This is $\mI$ if $m=n=0$ and is $\emptyset$ if $n=0$ and $m>0$.  
Of course, 
$$\overline{\sE}(\bm, \mathbf{1}) = \sE(m)$$
as a right $\SI_m$-space for all $m\geq 0$.  If $\sE$ is unital, then
$\overline{\sE}(\mathbf{0},\mathbf{m}) =\mI$ for all $m\geq 0$.

With $\sD=\sE$, the pairing \autoref{pairing} can be written as 
\begin{equation}\label{pairing2}
\boxtimes\colon  \overline{\sE}(\bm,\bp) \otimes \overline{\sE}(\bn,\bq) 
\rtarr  \overline{\sE}(\bm + \bn, \bp + \bq)
\end{equation}
and the map \autoref{compair} can be written as
\begin{equation}\label{compair2}
\al\colon \overline{\sE}(\bn,\bp) \otimes \overline{\sE}(\bm,\bn) \rtarr (\overline{\sE\odot\sE})(\bm,\bp).
\end{equation}
If $\sE$ is an operad, we have the operadic product
\begin{equation}\label{gammak}
\ga^{\boxtimes p} \colon (\sE\odot\sE)^{\boxtimes p}(\bm) \rtarr \sE^{\boxtimes p}(\bm).
\end{equation}
We rewrite \autoref{gammak} in adjoint notation as 
\begin{equation}\label{gammak2}
\overline{\ga} \colon (\overline{\sE\odot\sE})(\bm,\bp) \rtarr \overline{\sE}(\bm,\bp),
\end{equation}
Composing  \autoref{gammak2} with \autoref{compair2} (and now thinking of our overlines as underlines),
 we obtain a composition making $\overline{\sE}$ a category enriched in $\sV$.
 The pairing \autoref{pairing2} provides  $\overline{\sE}$ with a  $\sV$-enriched permutative structure.  The product is strictly associative and unital by the associativity and unity conditions in the definition of an operad.  A straightforward check of definitions gives the following result.  
\begin{prop}  If $\sC$ is an operad in $\sV$, then $\overline{\sC}$ is a permutative category with composition $\overline{\ga}\com \al$
as displayed in \autoref{compair2} and  \autoref{gammak2}.  Its object monoid is $\mathbb N$ and its product is 
$\boxtimes$, as displayed in \autoref{pairing2}.    
\end{prop}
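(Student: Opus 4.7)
The plan is to verify in sequence: first the category axioms for $\tilde{\sC}$; second that the pairing $\boxtimes$ of \autoref{pairing2} extends addition on $\mathbb{N}$ and is bifunctorial with respect to composition; and third that the symmetry and unit isomorphisms of Day convolution descend to give $\tilde{\sC}$ the structure of a symmetric monoidal category with object monoid $(\mathbb{N},+)$. Each of these follows from reinterpreting structural maps already produced in the paper, but the content of the statement is that these reinterpretations are mutually coherent.

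For the category axioms, the identity on an object $\bn$ is obtained from the operad unit $\et\colon \mI_1 \rtarr \sC$: since the computation in the proof of \autoref{boxtimes} gives $\mI_1^{\boxtimes n}(\bn) \iso \mI$, the map $\et^{\boxtimes n}$ yields $\mI \rtarr \sC^{\boxtimes n}(\bn) = \tilde{\sC}(\bn,\bn)$. The unit axioms for composition in $\tilde{\sC}$ then translate into the unit axioms for $\sC$ as a monoid in $\SI^{op}[\sV]$ combined with the unit coherences of $\boxtimes$. Associativity of the composite $\tilde{\ga}\com \al$ decomposes into two pieces: the map $\al$ of \autoref{compair2} is a structural map produced by the fact that $(-)\odot \sE$ is strong symmetric monoidal (\autoref{key}), and $\tilde{\ga}$ is an adjoint reformulation of the operadic product. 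Associativity of $\tilde{\ga}\com \al$ then reduces by diagram chase to the associativity of $\sC$ as a monoid in $\SI^{op}[\sV]$ together with the associativity of $\odot$ proved in \autoref{boxtimes}.

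For the monoidal structure on objects, the identification $\bm \boxtimes \bn = \mathbf{m+n}$ and unit $\mathbf{0}$ come directly from the permutative structure on $\SI$ recalled in \autoref{pedant}. On morphisms, the pairing \autoref{pairing2} is induced twice by the universal property of Day convolution, giving
\[\sC^{\boxtimes p}(\bm)\otimes \sC^{\boxtimes q}(\bn)\rtarr (\sC^{\boxtimes p}\boxtimes \sC^{\boxtimes q})(\mathbf{m+n})\iso \sC^{\boxtimes(p+q)}(\mathbf{m+n}),\]
with the final isomorphism a coherence of the symmetric monoidal structure of $\boxtimes$ on $\SI^{op}[\sV]$. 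Preservation of identities follows from the explicit form of $\et^{\boxtimes n}$ above. The symmetry and associator isomorphisms for $\boxtimes$ on $\tilde{\sC}$ are inherited verbatim from those on $\SI^{op}[\sV]$, so the pentagon, triangle, and hexagon coherences reduce to those already known for $(\SI^{op}[\sV], \boxtimes, \mI_0)$ from \autoref{subbox}.

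The main obstacle is the interchange law making $\boxtimes$ a bifunctor on $\tilde{\sC}\times \tilde{\sC}\rtarr \tilde{\sC}$, namely
\[(f\boxtimes g)\com (f'\boxtimes g') = (f\com f')\boxtimes (g\com g').\]
Unwinding definitions, the left side assembles the composites $\tilde{\ga}\com \al$ on each factor and then pairs them by $\boxtimes$, while the right side first pairs by $\boxtimes$ and then composes. The identification of these two expressions reduces, after invoking the naturality of $\al$ and the symmetric monoidality of $(-)\odot \sC$ from \autoref{key}, precisely to the equivariance axiom of the operad $\sC$, which says that $\ga\colon \sC\odot\sC\rtarr \sC$ is a morphism of $\SI$-sequences compatible with the block structure. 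This is where the delicate verification lives, but it is exactly the content already built into the monoid structure of $\sC$ in $\SI^{op}[\sV]$, justifying the paper's claim that the verification is a straightforward unraveling.
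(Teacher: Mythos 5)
The paper provides no actual proof of this proposition; it simply asserts that ``a straightforward check of definitions'' gives the result. Your proposal carries out that check in a natural and correct decomposition: category axioms from the operad's monoid structure in $\SI^{op}[\sV]$; the monoidal pairing on morphisms from iterated Day convolution together with the identification $\sC^{\boxtimes p}\boxtimes\sC^{\boxtimes q}\iso\sC^{\boxtimes(p+q)}$; and coherences inherited from those of $\boxtimes$ on symmetric sequences. Your recognition that the interchange law is the genuinely substantive verification, and that it rests on $\al$ (via the strong monoidality of $(-)\odot\sE$ from \autoref{key}) interlocking with $\tilde{\ga}$ being a morphism of $\SI$-sequences, is exactly right. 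One small calibration: I would phrase the interchange reduction as coming from the combination of the operad's equivariance axioms \emph{and} the naturality of the coherence isomorphisms of \autoref{key} under the shuffle isomorphisms of $\boxtimes$, rather than ``precisely the equivariance axiom''; in practice you need both to commute the relevant shuffles past $\al$ and $\tilde{\ga}$. But that nuance is at the same level of informality as the paper's own appeal to a ``straightforward check,'' so your account is a faithful and slightly more informative rendering of the paper's intent.
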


\begin{defn}  We call the permutative  category $\overline{\sC}$ the permutative envelope of the operad $\sC$.
\end{defn}

We single out two observations about $\overline{\sC}$.    First, the unit $\mI\rtarr \sC(1)$ of $\sC$ induces a map of monoids  
$$i_n\colon \SI_n \rtarr \overline{\sC}(\bn,\bn),$$
where the target is a monoid under composition; explicitly, $i_n$ is the composite
$$\SI_n \iso \mI^{\otimes n} \otimes \SI_n \rtarr \sC(1)^{\otimes n}\otimes \SI_n \subset  \sC^{\boxtimes n}(\bn) = \sC(\bn,\bn).$$
The $i_n$ specify a functor  $i_* \colon \SI \rtarr \overline{\sC}$.

Second, remembering that $\sC(n) = \overline{\sC}(\bn,\mathbf{1})$,  the composite

\begin{equation}\label{eq:permute} 
\xymatrix{ \coprod_{m_1 + \cdots + m_n = m}  \big(\overline{\sC}({\bm_1},\mathbf{1}) \otimes \cdots \otimes \overline{\sC}({\bm_n},\mathbf{1}) \big)
\otimes_{\SI_{m_1}\times \cdots \times \SI_{m_n}} \SI_m  \ar[d]^{\boxtimes\otimes i_m} \\
\overline{\sC}(\bm,\bn) \otimes_{\SI_{m_1}\times \cdots \times \SI_{m_n}} \overline{\sC}(\bm,\bm) \ar[d]^{\com}\\
\overline{\sC}(\bm,\bn)\\}  
\end{equation} 
is an isomorphism of right $\SI_m$ and left  $\SI_n$ objects in $\sV$ for
each $m$ and $n$.  Here $\ta\in\SI_n$ acts from the left on the source by permuting the $n$ tensor factors as $\ta$ permutes $n$ letters.

The case $n=1$ is worth emphasizing.  In the definition of an operad, the structure map $\gamma$ is required to be equivariant with respect to block permutations, whereas in contrast, $\circ: \overline{\sC}(\bn, \mathbf{1}) \otimes \overline{\sC}(\bm,\bn)
  \rtarr \overline{\sC}(\bm, \mathbf{1})$ is equivariant with respect to the  full $\Sigma_m$ and is also $\SI_n$-equivariant, where $\SI_n$ acts trivially on 
  $\overline{\sC}(\bm,\mathbf{1})$. This is so because \autoref{eq:permute} has identified
  $\overline{\sC}(\bm,\bn)$ with a symmetrized version of the input $\sC(m_1)\otimes \cdots \otimes \sC(m_n)$. 

Conversely, suppose that we have a permutative category $(\sK,\boxtimes)$ with object monoid $\mathbb N$ together with functors
$i_*\colon \SI \rtarr \sK$ sending $\SI_n$ to $ \rtarr \sK(\bn,\bn)$ such that the composite 
\begin{equation}
\label{eq:symmetrize}
\xymatrix{ \coprod_{m_1 + \cdots + m_n = m}  \big(\sK({\bm_1},\mathbf{1}) \otimes \cdots \otimes \sK({\bm_n},\mathbf{1}) \big) \otimes_{\SI_{_1}\times \cdots \times \SI_{m_n}} \SI_m  \ar[d]^{\boxtimes\otimes i_m} \\
\sK(\bm,\bn) \otimes_{\SI_{m_1}\times \cdots \times \SI_{m_n}} \sK(\bm,\bm) \ar[d]^{\com}\\
\sK(\bm,\bn)\\}  
\end{equation}
is an isomorphism of right $\SI_m$ and left  $\SI_n$ objects in $\sV$ for each $m\geq 1$ and $n\geq 0$. Then the right $\SI_n$-spaces $\sC(n) = \sK(\bn,\mathbf{1})$ specify an underlying operad with structure maps $\ga$ given by the composites
\[ \xymatrix@1{ \gamma:  \sC(n)\otimes \sC(m_1)\otimes \cdots \otimes \sC(m_n) \ar[r]^-{\id\otimes \boxtimes} & 
\sK(\bn,\mathbf{1})\otimes \sK(\bm,\bn) \ar[r]^-{\com}  & \sC(m), \\} \]
where $m= m_1+ \cdots + m_k$.  Another direct verification shows that $\sK$ is then isomorphic to the permutative envelope 
$\overline{\sC}$ as a permutative category.
Similarly, if we start with an
operad $\sC$, then $\sC$ is isomorphic as an operad to the underlying operad of
$\overline{\sC}$.   Summarizing and elaborating, we have the following version of
\cite[Proposition 3.1]{BBPTY2}.
\begin{thm}  The permutative  envelope and the underlying operad functors specify an
 equivalence of categories between the  category of operads in $\sV$ and a full subcategory of the
  category of $\sV$-enriched permutative categories with object monoid $\bN$ and permutative functors between them.
\end{thm}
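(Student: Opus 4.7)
The plan is to bootstrap off the constructions and verifications already sketched before the theorem statement, which give the object-level assignments in both directions together with natural isomorphisms for the round trips. What remains is to promote these to a functorial equivalence and to identify the essential image.

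First, I would define the two functors. On objects, the symmetric monoidal envelope functor sends an operad $\sC$ to $\tilde{\sC}$ with composition $\tilde{\ga}\com \al$ and monoidal product $\boxtimes$ as displayed in \autoref{pairing2}, \autoref{compair2}, and \autoref{gammak2}. On morphisms, given a map of operads $f\colon \sC\rtarr \sC'$, the induced map on symmetric sequences gives componentwise maps $\sC^{\boxtimes p}(\bm)\rtarr (\sC')^{\boxtimes p}(\bm)$ by functoriality of Day convolution, hence a map $\tilde{f}\colon \tilde{\sC}\rtarr \tilde{\sC'}$. One checks that $\tilde{f}$ is the identity on the object monoid $\bN$, is strict symmetric monoidal with respect to $\boxtimes$ (by the naturality of \autoref{pairing2}), and preserves composition because $f$ commutes with the operadic product $\ga$ and with the insertion maps $i_n$. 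In the reverse direction, the underlying operad functor sends a symmetric monoidal category $\sK$ with object monoid $\bN$ satisfying the isomorphism condition to the operad with $\sC(n) = \sK(\bn,\mathbf{1})$ and structure maps given by $\com\com (\id\otimes \boxtimes)$ as displayed above the theorem. A strong symmetric monoidal functor $F\colon \sK\rtarr \sK'$ that is the identity on objects restricts to $\SI_n$-equivariant maps $\sK(\bn,\mathbf{1})\rtarr \sK'(\bn,\mathbf{1})$, and the fact that $F$ preserves $\boxtimes$ and $\com$ and intertwines the $i_n$ (since the $i_n$ are determined by the monoidal unit) shows that this restriction is a map of operads.

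Next, I would verify that the two round-trips are naturally isomorphic to the identity. Starting with an operad $\sC$, the underlying operad of $\tilde{\sC}$ has $n$th object $\tilde{\sC}(\bn,\mathbf{1}) = \sC^{\boxtimes n}(\bn) \iso \sC(n)$ by inspection of the definition of $\tilde{\sE}$, and the structure maps $\ga$ extracted from $\tilde{\sC}$ reduce under these identifications to the original $\ga$ of $\sC$; this is exactly the calculation sketched in the paragraph preceding the theorem. Conversely, starting with $\sK$, the discussion just before the theorem gives an isomorphism $\tilde{\text{Op}(\sK)}\iso \sK$ of symmetric monoidal categories enriched in $\sV$: on morphism objects, the isomorphism is forced componentwise by the displayed isomorphism condition on $\sK$ together with the definition of $\tilde{\sE}$ via Day convolution powers. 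Naturality of both isomorphisms in $\sC$ (resp.\ $\sK$) is a routine diagram chase from naturality of Day convolution and of the coend defining $\odot$.

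Finally, I would pin down the full subcategory. The essential image consists of those symmetric monoidal $\sV$-categories $(\sK,\boxtimes)$ with object monoid $\bN$ equipped with monoid maps $i_n\colon \SI_n\rtarr \sK(\bn,\bn)$ such that the displayed composite $\coprod \sK(\bm_i,\mathbf{1})\otimes_{\prod \SI_{m_i}}\SI_m\rtarr \sK(\bm,\bn)$ is an isomorphism for each $m\geq 1$ and $n\geq 0$; the morphisms are strict (or strong, which then forces strict up to canonical coherence) symmetric monoidal $\sV$-functors that are the identity on objects. Fullness and faithfulness of the envelope functor on hom-sets follow because any such functor $F\colon \tilde{\sC}\rtarr \tilde{\sC'}$ is determined on the level-$(\bn,\mathbf{1})$ pieces by the isomorphism condition, and a map of operads is exactly such a coherent collection of $\SI_n$-equivariant maps $\sC(n)\rtarr \sC'(n)$ compatible with $\ga$.

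The main obstacle I expect is bookkeeping, not conceptual: one must track carefully that the symmetry of $\boxtimes$ in $\tilde{\sC}$ agrees with the symmetry induced from the block-permutation action of $\SI_n$ on $\SI_m$ used in \autoref{loxlox}, so that a monoid map $i_n$ together with the isomorphism condition really does recover the full equivariance structure of the operad rather than an unsymmetrized version. Once this coherence is verified, the rest of the argument is a matter of matching definitions.
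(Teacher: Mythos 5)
Your proposal fleshes out, in essentially the same way, the verifications the paper defers ("Another direct verification shows..." and "Summarizing and elaborating..."): you define both functors on morphisms, check the round-trips via $\tilde{\sC}(\bn,\mathbf 1)=\sC(n)$ and the displayed isomorphism condition on $\sK$, and identify the essential image, which is exactly the route the paper takes while referring to \cite{BBPTY2} for details. Your closing caveat about tracking the $\SI_n$-symmetry of $\boxtimes$ against the block-permutation action from \autoref{loxlox} is a real point the paper elides, and your treatment of strong versus strict monoidality of functors in the essential image is consistent with what the isomorphism condition forces.
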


\subsection{The relationship with categories of operators}\label{COO}

We consider two constructions on symmetric sequences $\sE$ in $\sV$.  Let $\sF$ and $\sF_{>0}$ be as in \autoref{Nmods}. 
Clearly $\SI\subset \LA
\subset \sF_{>0} \subset \sF$; $\LA$ and $\sF_{>0}$ are isomorphic to the corresponding categories of unbased finite sets.
Define
\[  \overline{\sE}(\bm,\bn) = \coprod_{\ph\in \sF_{>0}(\bm,\bn)}\,  \bigotimes_{1\leq j\leq n} \sE(|\ph^{-1}(j)|) \]
\[  \widehat{\sE}(\bm,\bn) = \coprod_{\ph\in \sF(\bm,\bn)}\,  \bigotimes_{1\leq j\leq n} \sE(|\ph^{-1}(j)|)  \]
Using composition in $\sF$, these are right $\SI_m$ and left $\SI_n$ objects of $\sV$, where the left action uses permutations of tensor factors. 

We show that this new definiton of $\overline{\sE}$ is isomorphic to $\sE^{\boxtimes n}(\bm)$,  so that it agrees with the old definition in \autoref{tildeE}.

 \begin{lem}\label{symseqiso} For symmetric sequences $\sE$, there is a natural isomorphism
  $$\om\colon \sE^{\boxtimes n}(\bm)  \rtarr \overline{\sE}(\bm,\bn)$$
 of  right $\SI_m$ and left $\SI_n$ objects of $\sV$.
 \end{lem}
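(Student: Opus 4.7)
The plan is to identify both sides with the same explicit coproduct indexed by sequences of non-negative integers summing to $m$, and to exhibit the bijection between $\PS(\bm,\bn)$ and pairs of sequences plus coset data, so that the identification is automatic once equivariance is checked.

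First I would apply \autoref{loxlox} directly to rewrite
\[
\tilde{\sE}(\bm,\bn) = \sE^{\boxtimes n}(\bm) \iso \coprod_{j_1+\cdots+j_n = m} \bigl(\sE(\bj_1)\otimes\cdots\otimes\sE(\bj_n)\bigr)\otimes_{\SI_{j_1}\times\cdots\times\SI_{j_n}} \SI_m.
\]
Next, I would establish the combinatorial bijection
\[
\PS(\bm,\bn) \;\longleftrightarrow\; \coprod_{j_1+\cdots+j_n=m} \SI_m/(\SI_{j_1}\times\cdots\times\SI_{j_n}),
\]
sending $\ph\in\PS(\bm,\bn)$ to the sequence $(|\ph^{-1}(1)|,\dots,|\ph^{-1}(n)|)$ together with the coset of any permutation that lexicographically orders the fibers of $\ph$. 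Equivalently, I would pick the shuffle representatives used in \autoref{boxbox}: given $(j_1,\dots,j_n)$ and a $(j_1,\dots,j_n)$-shuffle $\al$, the map $\ph_{\al}\in\PS(\bm,\bn)$ sends $\al(j_1+\cdots+j_{k-1}+i)\mapsto k$ for $1\le i\le j_k$. This gives canonical orbit representatives.

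Then I would define $\om$ componentwise: on the summand indexed by a shuffle $\al$ (equivalently, on the summand indexed by $\ph_{\al}$ in $\bar{\sE}(\bm,\bn)$) the map is the obvious identification of $\sE(\bj_1)\otimes\cdots\otimes\sE(\bj_n)\otimes\{\al\}$ with $\bigotimes_{j=1}^n \sE(|\ph_{\al}^{-1}(j)|)$. Summing over shuffles and all decompositions yields $\om$, and an inverse is produced by sending $\ph$ to its shuffle representative $\al$ together with the identical tensor factors. This verifies that $\om$ is an isomorphism in $\sV$.

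The main obstacle is verifying the two equivariances simultaneously. For the right $\SI_m$-action, on $\tilde{\sE}(\bm,\bn)$ an element $\mu\in\SI_m$ acts by right multiplication on the $\SI_m$-factor, while on $\bar{\sE}(\bm,\bn)$ it acts by precomposition $\ph\mapsto \ph\com\mu$; the two agree because right multiplication by $\mu$ sends the shuffle $\al$ to $\al\mu$, whose associated partition map is $\ph_{\al}\com\mu$. For the left $\SI_n$-action, on $\tilde{\sE}(\bm,\bn)$ an element $\ta\in\SI_n$ acts diagonally, permuting the tensor factors $\sE(\bj_i)\mapsto \sE(\bj_{\ta^{-1}(i)})$ and postmultiplying the $\SI_m$-factor by the block permutation $\ta(j_1,\dots,j_n)$ of \autoref{loxlox}; on $\bar{\sE}(\bm,\bn)$ it acts by postcomposition $\ph\mapsto \ta\com\ph$, which rearranges both the tensor factors (indexed by targets) and the shuffle. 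Chasing the explicit shuffle representatives shows these descriptions agree, completing the proof.
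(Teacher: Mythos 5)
Your proposal is correct and takes essentially the same approach as the paper: both identify $\tilde{\sE}(\bm,\bn)$ with a coproduct over ordered partitions of $m$ via the explicit form of iterated Day convolution, set up the bijection between $\PS(\bm,\bn)$ and those partitions together with coset data, define $\om$ componentwise, and extend/verify by equivariance. The paper defines $\om$ on the identity-coset component and extends by right $\SI_m$-equivariance rather than enumerating shuffle representatives upfront, but this is a presentational rather than mathematical difference.
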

 \begin{proof}
 Recall the description of the domain from the first statement of \autoref{loxlox}. 
 Since we require $\om$ to be a map of right $\SI_n$-objects, it suffices to define it on each component 
 \begin{equation}\label{component}
 \sE(m_1) \otimes \cdots \otimes \sE(m_n)
 \end{equation}
of the component
\begin{equation}\label{componenttoo}
 \coprod_{m_1 + \cdots + m_n = m}  \sE(m_1) \otimes \cdots \otimes \sE(m_n)
 \end{equation}
of the identity element of $\SI_m$ in $\sE^{\boxtimes n}(\bm)$. 
We use the given partition of $m$ into $n$ numbers to define a $\phi \in \sF_{>0}(\bm,\bn)$ 
for this component.   Explicitly, let $m_{j,i} = m_1 +\cdots + m_{j-1} +i$ for 
$1\leq j\leq n$ and $1\leq i\leq m_j$. Of course, if $m_j = 0$, there are  no such $i$. This
specifies an identification of $\bm$ in blocks, and we define $\ph\colon \bm\rtarr \bn$ by $\ph(0) =
0$ and $\ph(m_{j,i}) = j$; we later denote this $\ph$ by $\ph_{m_1,\cdots,m_n}$.  Clearly
$\ph^{-1}(j) = m_j$ for $1\leq j\leq n$.   Via the identity map on \autoref{component}  and passage
to coproducts, this defines $\om$ on the identity component \autoref{componenttoo}. It is clear that $\om$ defined by right $\SI_m$-equivariance is then  a left $\SI_n$-map.  To see that $\om$ is an isomorphism, let $\ph\colon \bm\rtarr \bn$ be any map in $\sF_{>0}$ and let $m_j = |\ph^{-1}(j)|$ for $1\leq j\leq n$.  There is a $\si\in \SI_m$ such that $\ph_{m_1,\cdots,m_n} \com \si = \ph$; for example, we can require $\si$ to map
the elements of $\ph^{-1}(j)$, in order, to the elements ${m_{j,i}}$, in order.  Then $\ph_{m_1,\cdots,m_n} \com \ta\com  \si = \ph$ 
for any $\ta\in \SI_{m_1}\times \cdots \times \SI_{m_n}$.  This implies that $\om$ maps components bijectively and is an  isomorphism
on each component.
\end{proof}

Now let $\sC$ be an operad.  Via the isomorphisms of \autoref{symseqiso}, we may give $\sC$ a structure of permutative category enriched in 
$\sV$ and identify it with the permutative envelope of $\sC$.  We relate it to the operadic  examples of  categories of operators, as defined in \cite{MT}  or, equivariantly,  \cite[Section 5.3]{MMO}; the latter allows an equivariant generalization of the following results.

\begin{defn}\label{catopsC}  The category of operators $\widehat{\sC}$ of an
  operad $\sC$ is the permutative category $\widehat{\sC}$ enriched in
  $\sV$ with objects $\bn$ and hom objects $\widehat{\sC}(\bm,\bn)$.  Its product 
  $$\otimes\colon \widehat{\sC}(\bm, \bn)\times \widehat{\sC}(\bp,\bq)\rtarr \widehat{\sC}(\mathbf{m+p},\mathbf{n+q})$$  
  is defined using the evident wedge product  $\wed\colon \sF\times\sF\rtarr \sF$ (which gives $\sF$ a structure of permutative category) and the obvious concatenation of $\otimes$-products of terms $\sC(j)$. By inspection, this product is associative and unital with unit the $1$-point component indexed on 
 $\sF(\mathbf{0},\mathbf{0})$.  Its composition
\[  \widehat{\sC}(\bn,\bp)\otimes \widehat{\sC}(\bm, \bn) \rtarr \widehat{\sC}(\bm,\bp) \]
is defined by sending the component indexed on $(\ps,\ph) \in \sF(\bn,\bp)\times \sF(\bm,\bn)$ to
the component indexed on $\ps\com \ph \in \sF(\bm,\bp)$
by the following composite, where $m_j = |\ph^{-1}(j)|$ for $1\leq j\leq n$,  $n_k = |\ps^{-1}(k)|$ for 
$1\leq k\leq p$, and $m_{k,j} = \sum_{\ps(j) = k} m_j$.   
$$\xymatrix{ 
 \bigotimes_{1\leq k\leq p}\sC(n_k) \otimes \bigotimes_{1\leq j\leq n}\sC(m_j)\ar[d]^{shuffle} \\
      \bigotimes_{1\leq k\leq p}\big( \sC(n_k)\otimes \bigotimes_{\ps(j) = k} \sC(m_j) \big) \ar[d]^{\ga^p}\\
      \bigotimes_{1\leq k\leq p} \sC(m_{k,j})\ar[d]^{\otimes_k\si_k(\ps,\ph)}\\ 
     \bigotimes_{1\leq k\leq p} \sC(m_{k,j}).\\}
$$
Here $\si_k(\ps,\ph)$ is the permutation of $|(\ps\com\ph)^{-1}(k)|$ elements that converts the natural order of $(\ps\com\ph)^{-1}(k)$ as a subset of $\{1,\cdots,m\}$ to the order obtained by regarding it as the set $\coprod_{\ps(j)=k}\ph^{-1}(j)$ so ordered that all elements of 
$\ph^{-1}(j)$ precede all elements of $\ph^{-1}(j')$ if $j< j'$ and each set $\ph^{-1}(j)$ has its natural order as a subset of $\{1,\cdots,m\}$.  
\end{defn}

We now have two symmetric monoidal category structures on $\overline{\sC}$, namely the structure obtained via the isomorphism with the permutative envelope $\overline{\sC}$ and the structure given as a subcategory of $\widehat{\sC}$.  By combinatorial inspection, these structures agree, giving the following result.

\begin{prop}  The permutative envelope of an operad in $\sV$ is a sub symmetric monoidal category of its category of operators.
\end{prop}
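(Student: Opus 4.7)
The plan is to exhibit $\bar{\sC}$ as the summand of $\hat{\sC}$ indexed by $\PS \subset \sF$, and then show that both the composition and the monoidal product of $\hat{\sC}$ restrict to that summand to give the symmetric monoidal envelope structure transported along the isomorphism $\om$ of \autoref{symseqiso}. The first observation is purely set-theoretic: if $\ph\in \PS(\bm,\bn)$ and $\ps\in \PS(\bn,\bp)$, then $(\ps\com\ph)^{-1}(0) = \ph^{-1}(\ps^{-1}(0)) = \ph^{-1}(0) = 0$, and similarly $\ph\wed\ph'\in\PS$ whenever $\ph,\ph'\in\PS$. Hence $\PS$ is a wide symmetric monoidal subcategory of $\sF$, and the coproduct decompositions defining $\bar{\sC}(\bm,\bn)$ and $\hat{\sC}(\bm,\bn)$ show that $\bar{\sC}(\bm,\bn)$ is a direct summand of $\hat{\sC}(\bm,\bn)$ in $\sV$, compatibly with the $\SI_m$--$\SI_n$ actions.

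Next I would verify that the restriction of the composition and monoidal product of $\hat{\sC}$ to these $\PS$-indexed summands is internal: the combinatorial closure just noted guarantees that for $\ph\in\PS(\bm,\bn)$ and $\ps\in\PS(\bn,\bp)$, the composition formula lands in the $\ps\com\ph$-component, which lies in $\bar{\sC}(\bm,\bp)$; and similarly for the wedge-based product. Thus $\bar{\sC}$ inherits from $\hat{\sC}$ a symmetric monoidal $\sV$-category structure with the same identities (which are already defined via $\PS$-morphisms).

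The substantive step is to match this inherited structure with the envelope structure on $\bar{\sC}$ coming from $\tilde{\sC}$ via $\om$. For the monoidal product, the Day product \autoref{pairing2} on $\tilde{\sC}$ is transported by $\om$ to the map that takes the component indexed by $\ph_{m_1,\dots,m_p}\in \PS(\bm,\bp)$ tensored with the component indexed by $\ph_{n_1,\dots,n_q}\in \PS(\bn,\bq)$ to the component indexed by $\ph_{m_1,\dots,m_p}\wed \ph_{n_1,\dots,n_q}\in \PS(\mathbf{m+n},\mathbf{p+q})$ via the identity on the tensor factors and the block inclusion $\SI_m\times\SI_n\hookrightarrow \SI_{m+n}$; comparison with the definition of $\otimes$ on $\hat{\sC}$ (which uses $\wed$ on $\sF$ and concatenation of tensor factors) is immediate. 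For composition, unravel $\tilde{\ga}\com\al$: on the identity $\SI_m$-component, $\al$ is the isomorphism of \autoref{key} that identifies $\sC^{\boxtimes p}(\bn)\otimes \sC^{\boxtimes n}(\bm)$ with $(\sC\odot\sC)^{\boxtimes p}(\bm)$, and by \autoref{loxlox} this is precisely the shuffle that groups the $\sC(m_j)$ for $j$ with $\ps(j)=k$ together with $\sC(n_k)$ for each $k$, together with the block permutation recording how these blocks sit inside $\bm$. Applying $\tilde{\ga}$ then applies $\ga$ in each of the $p$ blocks, which is exactly $\ga^p$ in the composition formula for $\hat{\sC}$; the residual block permutation transported by $\om$ is precisely $\si_k(\ps,\ph)$.

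I expect the main obstacle to be bookkeeping: making explicit that the shuffle implicit in the isomorphism $\al$ of \autoref{compair2}, combined with the identification $\om$ from \autoref{symseqiso}, assembles to exactly the block shuffle and permutations $\si_k(\ps,\ph)$ defining composition in $\hat{\sC}$. Once this lexicographic/block comparison is carried out on the identity component (extending by $\SI_m$-equivariance), faithfulness of the inclusion on objects together with matching of identities, composition, and $\otimes$ gives the desired embedding of symmetric monoidal $\sV$-categories.
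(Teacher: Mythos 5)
Your proposal is correct and takes essentially the same route the paper does: the paper simply asserts that the two symmetric monoidal structures on $\bar{\sC}$ (the one transported from $\tilde{\sC}$ via $\om$, and the one inherited as a $\PS$-indexed summand of $\hat{\sC}$) agree ``by combinatorial inspection,'' and your plan is precisely that inspection, carried out. The closure of $\PS$ under composition and $\wed$, the reduction to the identity $\SI_m$-component by equivariance, and the identification of the shuffle in $\al$ with $\ga^p$ together with the block permutations $\si_k(\ps,\ph)$ are exactly the points the paper waves at. One small imprecision: $\al$ in \autoref{compair} is the canonical map into the coend $(\sD^{\boxtimes p}\odot\sE)(\bm)$ followed by the isomorphism from \autoref{key}; calling $\al$ itself ``the isomorphism of \autoref{key}'' slightly conflates the two, though it does not affect the argument.
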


From $\widehat{\sC}$,  we construct a monad $\widehat{\mathbb{C}}$ in the category $\PI[\sV_*]$ of covariant functors $\PI \rtarr \sV_*$, where $\sV_*$ is the category of based objects in our cartesian monoidal category $\sV$.  We set

\begin{equation*}
\widehat{\bC} \sX  = \widehat{\sC} \otimes_{\PI} \sX,
\end{equation*}
which is spelled out as
\begin{equation}
  \label{eq:C-hat}
  (\widehat{\bC} \sX)(\bn)  = \widehat{\sC} (-, \bn) \otimes_{\PI} \sX.
\end{equation}

Here $\PI\subset \sF$ is the subcategory of injections and projections; thus $\ph\colon \bm\rtarr \bn$ is in $\PI$ if and only if $| \ph^{-1}(j)|$ is $0$ or $1$ for 
$1\leq j\leq n$.  When $\sC$ is a based operad, using $* \rtarr \sC(0)$ and the unit $\eta: *
  \rtarr \sC(1)$, we obtain a functor $\PI \rtarr \widehat{\sC}$, which is an inclusion if $\sC$ is $\SI$-free. Via this functor,
 $\widehat{\sC} (-, \bn)$ in \autoref{eq:C-hat} is a contravariant functor on $\Pi$. 
 Note that $\PI_{>0} = \PI\cap \sF_{>0}$ would add in the condition $\ph^{-1}(0) = \{0\}$, thus excluding  projections.  Therefore
 $\PI_{>0} = \LA$. 
 
 \begin{lem}  The evident inclusion
 $$ \overline{\sC} \otimes_{\LA} \sX \rtarr \widehat{\bC} \sX$$
 is an isomorphism.
\end{lem}
\begin{proof} Let $x\in \widehat{\sC}(\bm,\bn)$ be in the component indexed on $\ph \in \sF(\bm,\bn) \setminus \sF_{>0} (\bm, \bn)$
and let $y\in \sX(\bm)$.   Suppose that $\ph(i) = 0$ for $k$ numbers of the $i$ such that $1\leq i\leq m$.  Then there are maps 
$\pi\colon \bm\rtarr \mathbf{m-k}$ in $\PI$ and $\ps\colon \mathbf{m-k} \rtarr \bn$ in $\sF_{>0}$ such that $\pi(i) = 0$ when  $\ph(i) = 0$ and $\ph = \ps\com \pi$.  Moreover, it is immediate from the definitions of $\overline{\sC}$ and $\widehat{\sC}$ that there is a unique $w$ in the component of $\overline{\sC}(\mathbf{m-k}, \bn)$ indexed on $\ps$ such that $x = \pi^*(w)$.  By the definition of the coequalizer defining $\widehat{\bC} \sX$, $(x, y)$ is identified with $(w,\pi_*(y))$, which is in the component of 
$\overline{\sC} \otimes_{\LA} \sX$ indexed on $\ps$. 
\end{proof}
Nevertheless, use of the larger category $\widehat{\sC}$ rather than the permutative envelope $\overline{\sC}$ is essential to homotopical work in spaces (or $G$-spaces) since 
$\LA$ does not contain the Segal maps $\bn\rtarr \mathbf{1}$ in $\PI$ that are central to homotopical analysis.

\bibliographystyle{alpha}

\bibliography{references}

\end{document}